      \let\reset@font=\relax
\def\eps{\varepsilon}
\def\E{\mathbb E}
\def\Ex{\mathbb E}
\def\p{\mathbb P}
\def\PP{\mathbb P}
\def\Pr{\mathbb P}
\def\er{\mathbb R}
\newcommand*{\ind}[1]{\mathbf{1}_{\{#1\}}}
\newcommand{\R}{\mathbb{R}}
\newcommand{\N}{\mathbb{N}}
\newcommand{\supp}{{\rm supp\,}}
\newtheorem{lemma}{Lemma}[section]
\newtheorem{theorem}[lemma]{Theorem}
\newtheorem{prop}[lemma]{Proposition}
\newtheorem{cor}[lemma]{Corollary}
\newtheorem{thm}[lemma]{Theorem}
\newtheorem{defi}[lemma]{Definition}
\renewcommand\r{\right}
\newcommand\la{\langle}
\newcommand\ra{\rangle}
\newcommand\lam{\lambda}
\newcommand\Ga{\Gamma}
\title{Tail estimates for norms of sums of log-concave random vectors 
}
\author{
Rados{\l}aw Adamczak 
 \thanks{Research partially supported by MNiSW Grant no. N N201 397437
   and the Foundation for Polish Science.}
 \and  Rafa{\l} Lata{\l}a${}^{*}$
 \and  Alexander E. Litvak
\thanks{Research partially supported by  the 
E.W.R. Steacie Memorial Fellowship.}
\and Alain Pajor
\thanks{Research partially supported by the ANR project
ANR-08-BLAN-0311-01.}
\and  Nicole  Tomczak-Jaegermann
\thanks{This author holds the Canada Research Chair in
  Geometric Analysis.}
}
\newcommand\address{\noindent\leavevmode%
\noindent
Rados{\l}aw  Adamczak, \\
Institute of Mathematics, \\
University of Warsaw, \\
Banacha 2, 02-097 Warszawa, Poland\\
 \texttt{\small
e-mail:  radamcz@mimuw.edu.pl}

\medskip
\noindent
Rafa{\l} Lata{\l}a, \\
Institute of Mathematics, \\
University of Warsaw, \\
Banacha 2, 02-097 Warszawa, Poland\\
and \\
Institute of Mathematics,\\
Polish Academy of Sciences,\\
\'{S}niadeckich 8, 00-956 Warszawa, Poland\\
 \texttt{\small
e-mail:   rlatala@mimuw.edu.pl}

\medskip
\noindent
Alexander E. Litvak, \\
Dept.~of Math.~and Stat.~Sciences,\\
University of Alberta, \\
Edmonton, Alberta, Canada, T6G 2G1.\\
\texttt{\small%
e-mail:   alexandr@math.ualberta.ca}

\medskip
\noindent
Alain  Pajor, \\
Universit\'{e} Paris-Est\\
\'{E}quipe d'Analyse et Math\'{e}matiques Appliqu\'ees, \\
5, boulevard Descartes,
Champs sur Marne,\\
77454 Marne-la-Vall\'{e}e,  Cedex 2, France\\
\texttt{\small%
e-mail: Alain.Pajor@univ-mlv.fr }

\medskip
\noindent
Nicole  Tomczak-Jaegermann, \\
Dept.~of Math.~and Stat.~Sciences,\\
University of Alberta, \\
Edmonton, Alberta, Canada, T6G 2G1.\\
\texttt{\small%
e-mail:    nicole.tomczak@ualberta.ca}
}
\date{}
\begin{document}
\maketitle

\begin{abstract}  
  We establish new tail estimates for order statistics and for the
  Euclidean norms of projections of an isotropic log-concave random
  vector.  More generally, we prove tail estimates for the norms of
  projections of sums of independent log-concave random vectors, and
  uniform versions of these in the form of tail estimates for operator
  norms of matrices and their sub-matrices in the setting of a
  log-concave ensemble. This is used to study a quantity $A_{k,m}$
  that controls uniformly the operator norm of the sub-matrices with
  $k$ rows and $m$ columns of a matrix $A$ with independent isotropic
  log-concave random rows.  We apply our tail estimates of $A_{k,m}$
  to the study of Restricted Isometry Property that plays a major role
  in the Compressive Sensing theory.
\end{abstract}

\noindent AMS Classification:  46B06, 15B52, 60E15, 60B20\\

\noindent {\bf Key Words and Phrases:} log-concave random vectors; concentration inequalities, 
deviation inequalities, random matrices, order statistics of random vectors, 
Compressive Sensing, Restricted Isometry Property.

\section{Introduction} 
\label{intro}

In the recent years
a lot of work was done on the study of the
empirical covariance matrix, and on
understanding related random matrices with independent rows or columns.
In particular, such matrices appear naturally in two important
(and distinct) directions. Namely, \\
-- approximation of covariance matrices of high-dimensional
distributions by empirical  covariance matrices;
and \\
-- the Restricted Isometry  Property of sensing matrices defined
in the Compressive Sensing theory.

\vspace{2ex}

To illustrate, let $n, N$ be integers. 
 For $1 \le m \le N$ by $ U_m = U_m(\R^N)$ we denote the set of
 $m$-sparse vectors 
of norm one, that is, vectors $x \in S^{N-1}$ with at most $m$
non-zero coordinates.  
For any $n \times N$ random matrix $A$, treating $A$ as a linear
operator $A: \R^N \to \R^n$ we define $\delta_m (A) $ by $ \delta_m
(A) = \sup_{x\in {U_m}} \Big||Ax|^2 - \E|A x|^2 \Big|$.  (Here
$|\cdot|$ denotes the Euclidean norm on $\R^n$.)

Now let $X\in\R^N$ be a centered random vector with the covariance
matrix equal to the identity, that is, $\E X\otimes X = Id$; such
vectors are called isotropic.  Consider $n$ independent random
vectors $X_1, \ldots, X_n$ distributed as $X$ and let $A$ be the $n
\times N$ matrix whose rows are $X_1, \ldots, X_n$. Then
\begin{align}
  \label{eq:delta-m-1}
  \delta_m \Big(\frac{A}{\sqrt n}\Big) & = \sup_{x\in U_m} \bigl|
  \frac{1}{n} \left(|Ax|^2 
- \E  |Ax|^2 \right) \bigr| \nonumber\\
& = \sup_{x\in U_m}
\Big|\frac{1}{n}\sum_{i=1}^n \left(|\langle
   X_i, x\rangle|^2 - \E |\langle X_i, x\rangle|^2\right)\Big|.
\end{align}
In the particular case of $m=N$ it is also easy to  check that
\begin{equation}
\label{discrepancy'}   
\delta_N\Big(\frac{A}{\sqrt n}\Big)  
= \Big\|\frac{1}{n}\sum_{i=1}^n \left(X_i
\otimes X_i - \E X \otimes X \right) \Big\|. 
\end{equation}

\vspace{2ex}

We first
discuss   the case  $n \ge N$. In this case we will work
only with the parameter $\delta_N(A/\sqrt n)$.
By the law of large numbers, under some moment hypothesis, the empirical
covariance matrix $\frac{1}{n}\sum_{i=1}^n X_i\otimes X_i$ converges
to  $\E\, X\otimes X= Id$ in the operator norm, as $n \to \infty$.
A natural goal important for  many classes of distributions is to get
quantitative estimates of the rate of this convergence, in other
words,  to estimate the error  term $\delta_N(A/\sqrt n)$
with high probability, as $n \to \infty$.

This question was raised and investigated in \cite{KLS} motivated by
a problem of complexity in computing volume in high dimensions.  In
this setting it was natural to consider uniform measures on convex
bodies, or more generally, log-concave measures (see below for all the
definitions). Partial solutions were given in \cite{B} and \cite{R}
soon after the question was raised, and in the intervening years
further partial solutions were produced. 
A full and optimal answer to the Kannan-Lov\'asz- Simonovits
question  was given in   \cite{jams} and \cite{cras_alpt}.
For recent results on similar questions for other distributions, see
e.g., \cite{V, SV}.

The answer from  \cite{jams} and \cite{cras_alpt} to the K-L-S question 
on the rate of convergence  stated that:
\begin{equation}
  \label{eq:1}
\p \left(
   \sup_{x\in S^{N-1}} \Big|\frac{1}{n}\sum_{i=1}^n \left(|\langle
   X_i, x\rangle|^2 - 1\right)\Big|
  \leq C \sqrt{ \frac{N}{n}} \right) \ge 1-e^{-c \sqrt N},
\end{equation}
where $C$ and $c$  are  absolute positive constants.
The proofs  are
based on an approach initiated by J.~Bourgain \cite{B} where the
following norm of a matrix played a central role.  Let $1\le k \le n$, then
\begin{equation}
  \label{eq:AkN}
  A_{k,N} = \sup_{{J \subset \{1,\ldots,n\}}\atop {|J| = k}}
\sup_{x\in S^{N-1}} \Bigl(\sum_{j\in J} |\langle X_j,
x\rangle|^2\Bigr)^{1/2}
= \sup_{{J \subset \{1,\ldots,n\}}\atop {|J| = k}}
\sup_{x\in S^{N-1}} |P_J Ax|,
\end{equation}
where for ${J \subset \{1,\ldots,n\}}$, $P_J$ denotes the
orthogonal projection on the coordinate subspace $\R^J$ of $\R^n$.

To understand the role of $A_{k,N}$ for estimating $\delta_N(A/\sqrt
n)$, let us explain the standard approach. For each individual $x$
on the sphere, the rate of convergence may be estimated via some
probabilistic concentration inequality. The method consists of a 
discretization of the sphere and then the use of an approximation
argument to complete the proof. This approach works perfectly as
long as the trade-off between complexity and concentration allows
it.

Thus when the random variables $\frac{1}{n}\sum_{i=1}^n |\langle
X_i,x\rangle|^2$ satisfy a good concentration inequality sufficient
to handle uniformly exponentially many points, the method works. This
is the case for instance when the random variables $\langle
X,x\rangle$ are sub-gaussian or bounded, due to Bernstein
inequalities. In the general case, we decompose the function $|\langle
X_i,x\rangle|^2$ as the sum of two terms, the first being its
truncation at the level $B^2$, for some $B>0$. Now let us discuss
the second term in the decomposition of $\sum_{i=1}^n |\langle
X_i,x\rangle|^2$ . Let
$$
 E_B=E_B(x)=\{i\leq n\,:\, |\langle X_i,x\rangle|>B\}.
$$
For simplicity, let us assume that the maximum cardinality of the
sets of the family $\{E_{B}(x):\, x\in S^{N-1}\}$ is a fixed
non-random number $k$, then clearly the second term is controlled by
$$\sum_{i\in E_B} |\langle X_i,x\rangle|^2\leq A_{k,N}^2.$$
In order to estimate $k$, let $x$ such that  $k=|E_B(x)|=|E_B|$, then
$$
  B^2 k=B^2 |E_B|\leq \sum_{i\in E_B} |\langle X_i,x\rangle|^2.
$$
Thus we get the implicit relation $A_{k,N}^2\geq B^2 k$. From 
this relation and an estimate of the parameter $A_{k,N}$ we
eventually deduce an upper bound for $k$. To conclude the argument
of Bourgain, the bounded part is uniformly estimated by
a classical concentration inequality and the rest is controlled by
the parameter $A_{k,N}$.

Notice that we only need tail inequalities to estimate $A_{k,N}$, that
is to control uniformly the norms of sub-matrices of $A$. This is still
a difficult task however because of a high complexity of the problem
and the lack of matching probability estimates; and a more
sophisticated argument has been developed 
in \cite{jams} to handle it.

\vspace{2ex}

We now pass to
the complementary case $n < N$, which is one of central points of the
present paper, and was announced in \cite{cras_allpt}. 

Let $A$ be an $n\times N$ random matrix with rows $X_1,
\ldots, X_n$ which are independent random 
centered and with covariance matrices equal to the
identity, but not necessarily identically distributed.
Clearly, $A$ is then not invertible.  The uniform concentration on the
sphere $U_N=S^{N-1}$ (which appeared in the definition of
$\delta_m(A/\sqrt n)$ for $m=N$) does not hold and the expressions in
(\ref{eq:delta-m-1}) are not uniformly small on $U_N = S^{N-1}$.  The
best one can hope for is that $A$ may be ``almost norm-preserving'' on
some subsets of $S^{N-1}$. This is true for subsets $U_m$, for some
$1\le m \le N$ and is indeed measured by $\delta_m (A/\sqrt n)$.

The parameter $\delta_m$ plays
a major role in the Compressive Sensing theory
and an important question is to 
bound it  from  above with high
probability,  for  some (fixed) $m$.
For example, it  can be directly used
to express  the so-called  Restricted Isometry Property
(RIP) (introduced by  E.~Candes and T.~Tao in \cite{CT1})
which in turn ensures that
every $m$-sparse vector $x$ can be reconstructed from its compression
$Ax$ with $n\ll N$ by the so-called $\ell_1$-minimization method.

For matrices with independent rows $X_1, \ldots, X_n$, questions on
the RIP were understood and solved in the case of Gaussian and
sub-gaussian measurements (see \cite{CT1}, \cite{MPT} and
\cite{BDDW}). When $X_1, \ldots, X_n$ are independent log-concave
isotropic random vectors, these questions remained open and this is
one of our motivation for this article.

For an $n \times N$ matrix $A$ and $m \le N$, the definition of
$\delta_m(A/\sqrt n) $ implies a uniform control of the norms of all
sub-matrices of $A/\sqrt n$ with $n$ rows and $m$ columns. 
Passing to transposed matrices,
it implies a uniform control of ${|P_I X_i|}$ over all $I\subset
\{1,2,\cdots, N\}$ of cardinality $m$ and $1\le i\le n$.  In order
to verify  a necessary condition
that for some $m$, $\delta_m(A/\sqrt n)$ is small with
high probability, one  needs  to get an upper estimate for $\sup
\{|P_I X|: |I| =m\}$ valid  with high probability.

The probabilistic inequality from \cite{Pa}
\begin{equation}
 \label{Paouris estimate}
 \p\left(|P_I X|\ge C \, t  \sqrt m \right) \le e^{-t \sqrt m}
\end{equation}
valid for $t\geq 1$ is optimal for each individual $I$, but it does
not allow to get directly (by a union bound argument) a uniform
estimate because the probability estimate does not match the
cardinality of the family of the $I$'s.
 Thus the first natural goal we
address in this paper is to get uniform tail estimates for some norms
of log-concave random vectors.

\vspace{2ex}

This heuristic analysis points out  to the main objective and
novelty of the present paper; namely  the study of high-dimensional
log-concave measures and a deeper understanding of such measures and
their convolutions via new tail estimates for norms of sums of
projections of log-concave random vectors.

\vspace{2ex}

To emphasize a uniform character of our tail estimates, for an integer
$N\ge 1$, an $N$-di\-men\-sio\-nal random vector $Z$, an integer $1 \le m
\le N$, and $t\ge 1$, we consider the event
\begin{equation}
  \label{eq:omega}
\Omega (Z, t, m, N) = 
\Big\{\sup_{{I \subset \{1,\ldots,N\}}\atop {|I| = m}}|P_IZ|\geq
Ct\sqrt{m}\log\Big(\frac{eN}{m}\Big)\Big\},
\end{equation}
where $C$ is a sufficiently large absolute constant.
Note that the cut-off level in this definition is of the order of the
median of the supremum for the exponential random vector.

\vspace{2ex}

Recall that  $X, X_1, \ldots, X_n$  denote    $N$-dimensional
independent log-concave  isotropic random vectors,
and $A$ is the $n\times N$ matrix whose rows are 
$X_1, \ldots, X_n$.
A chain of
main results of this paper provides estimates 
for $\Pr (\Omega (Z, t, m,N))$  in the cases  when 
\begin{description}
\item[(i)]  $Z= X $; and, more generally,
\item[(ii)]   $ Z = Y $ is a weighted sum $Y  =\sum_1^n x_iX_i$,
where $x=(x_i)_1^n\in\R^n$, with control of  
the Euclidean and supremum norms of $x$,
\item[(iii)] a uniform version of (ii) in the form of tail estimates
  for operator norms of sub-matrices of  $A$.
\end{description}

Our first main theorem solves the question  of uniform tail estimates
for projections of a log-concave random vector discussed above.

\begin{thm}
  \label{imprunifPaouris-intro} 
Let $X$ be  an $N$-dimensional
 log-concave isotropic random vector. 
For any $1 \le m \le N$ and $t\ge 1$,
$$
\Pr \Big(\Omega(X, t, m, N) \Big)
\leq \exp\Big(-t \sqrt{m}\log\Big(\frac{eN}{m}\Big)
/\sqrt{\log(em)}\Big).
$$
\end{thm}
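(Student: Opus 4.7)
My plan is to combine Paouris's Euclidean tail estimate for a single coordinate projection with an aggregation over the $\binom{N}{m}$ size-$m$ subsets, carried out at the level of $L^p$ moments, and balanced via Markov's inequality.

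First, for each fixed $I\subset\{1,\dots,N\}$ with $|I|=m$, the projection $P_IX$ is itself an $m$-dimensional isotropic log-concave random vector, so Paouris's inequality (\ref{Paouris estimate}) in $L^p$ form yields
\[
(\E |P_I X|^p)^{1/p} \le C(\sqrt m + p), \qquad p\ge 1.
\]
Separately, I would establish the expectation bound $\E \sup_{|I|=m}|P_I X| \le C\sqrt m \log(eN/m)$ via standard order statistics arguments: each $X_i^2$ has a $\psi_{1/2}$-tail (since $X_i$ is log-concave with unit variance), so the expected value of the $j$-th largest $X_i^2$ is $\le C\log^2(eN/j)$; summing over $1\le j\le m$ and using the identity $\sup_{|I|=m}|P_I X|^2 = \sum_{j=1}^m (X^2)^*_j$, Jensen's inequality delivers the claim.

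Second, I would combine the per-$I$ bound with the expectation estimate to prove a moment inequality for the seminorm $\|x\| = \sup_{|I|=m}|P_I x|$ of the form
\[
\bigl(\E \|X\|^p\bigr)^{1/p} \le C\bigl(\sqrt m \log(eN/m) + p\sqrt{\log(em)}\bigr), \qquad p\ge 1.
\]
This is the key technical input; it plays the role of a Paouris-type inequality for seminorms. The weak variance of $\|\cdot\|$ equals $\sup\{|y|:|y|\le 1,\ |\supp y|\le m\}=1$, so an ``ideal'' Paouris-type bound would feature just $p$ in the second summand; the extra $\sqrt{\log(em)}$ factor is the cost of the union bound over the $\binom Nm$ coordinate subsets.

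Finally, I would apply Markov's inequality with the choice $p = t\sqrt m\log(eN/m)/\sqrt{\log(em)}$, which makes the two summands on the right-hand side comparable. This gives
\[
\p\Bigl(\sup_{|I|=m}|P_I X| \ge Ct\sqrt m\log(eN/m)\Bigr) \le e^{-p} = \exp\Bigl(-\frac{t\sqrt m\log(eN/m)}{\sqrt{\log(em)}}\Bigr),
\]
which is the required bound. The main obstacle will be establishing the moment inequality in the second step with the correct scaling: the naive union bound $(\E\sup^p)^{1/p} \le (eN/m)^{m/p}C(\sqrt m+p)$ loses a factor $\sqrt m$ at the optimal cutoff, so the argument must exploit the sharper expectation bound together with a variance-type reduction (in the spirit of a Paouris-type inequality for seminorms) to recover the correct level $\sqrt m\log(eN/m)$.
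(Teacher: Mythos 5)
Your overall framework (control $p$-th moments of the seminorm $\|x\|=\sup_{|I|=m}|P_Ix|$, then apply Markov at the balancing $p$) is a legitimate reorganization, and steps 1, 2 and 4 are fine. In particular, the expectation bound $\E\sup_{|I|=m}|P_IX|\le C\sqrt m\log(eN/m)$ does hold and can be obtained, without independence of coordinates, via a first-moment Markov bound on $N_X(t)=\sum_i\mathbf 1_{|X(i)|\ge t}$: from $\E N_X(t)\le CNe^{-t/C}$ one gets $\Pr(X^*(j)\ge t)\le (CN/j)e^{-t/C}$ and hence $\E X^*(j)^2\le C\log^2(eN/j)$, and summing over $j\le m$ gives the expectation estimate. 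Your Markov step at $p=t\sqrt m\log(eN/m)/\sqrt{\log(em)}$ is also correct, provided the moment bound in your second step is true.

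The genuine gap is precisely the one you flag yourself and then leave unfilled: the moment inequality
\[
(\E\|X\|^p)^{1/p}\le C\bigl(\sqrt m\log(eN/m)+p\sqrt{\log(em)}\bigr)
\]
is not accessible by the tools you invoke. It does not follow from Paouris applied to each $P_IX$ plus a union bound (you note this yourself: the exponent is off by a polynomial factor at the balancing $p$). It also does not follow from the expectation bound combined with the standard Borell/Gromov--Milman concentration for Lipschitz functions of a log-concave vector: that gives $\Pr(\|X\|\ge tM)\le e^{-ct}$ where $M\approx\sqrt m\log(eN/m)$ is the median, i.e.\ a $\psi_1$ tail at scale $M$, whereas the desired moment bound asserts a $\psi_1$ tail at the much smaller scale $\sqrt{\log(em)}$. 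And a Paouris-type inequality for a general (non-Euclidean) norm such as $\|\cdot\|$ is, as the paper itself notes in Remark 3 after Theorem~\ref{imprPaouris}, an open problem. So step 3 is not a technical footnote; it is the entire content.

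The paper gets around this not by proving your moment inequality directly, but by a different route: it establishes \emph{strong} tail estimates for the order statistics $X^*(\ell)$ (Theorem~\ref{orderstat}, of the form $\Pr(X^*(\ell)\ge t)\le\exp(-\sigma_X^{-1}(t\sqrt\ell/C))$ for $t\gtrsim\log(eN/\ell)$), which is far sharper than the first-moment bound on $N_X(t)$ you use; this in turn rests on the combinatorial $L^p$-moment bound for $N_X(t)$ in Theorem~\ref{estN} (Proposition~\ref{conditional} plus Lemma~\ref{combf}), which is the real technical heart. It then decomposes $\sup_{|I|=m}|P_IX|=(\sum_{j\le m}X^*(j)^2)^{1/2}\le \sup_{|J|=m_0}|P_JX|+(\sum_i 2^im_0X^*(2^im_0)^2)^{1/2}$ and treats the small-cardinality piece with Paouris plus an \emph{affordable} union bound (over subsets of size $m_0$ chosen so that the entropy matches the available probability), and the order-statistics tail for the dyadic piece. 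Your step 2 argument, based only on $\E N_X(t)$, gives the right \emph{first} moment of the order statistics but a far too weak \emph{tail}, so it cannot be bootstrapped to the $p$-th moment bound you need. To repair your proof you would essentially have to reprove Theorems~\ref{estN} and~\ref{orderstat}.
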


The proof of 
the theorem is based on tail estimates for order statistics
of isotropic log-concave vectors.
By $(X^*(i))_i$ we denote the non-increasing rearrangement  of
$(|X(i)|)_i$. Combining 
(\ref{Paouris estimate})  with methods of  \cite{La}  and the formula
$\sup_{{I \subset \{1,\ldots,N\}}\atop {|I| = m}}|P_IX|=
\left(\sum_{i=1}^m X^*(i)^2 \right)^{1/2}$ will complete the argument.

\vspace{2ex}

Let us also mention that further applications (in
Section~\ref{section_convolutions}) of inequality of this type require
a stronger probability bound that involves a natural parameter
$\sigma_X(p)$ -- defined in (\ref{def_sigma}) -- determined by a
``$\ell_p$-weak'' behavior of the random vector $X$.

More generally, the next step provides  tail estimates for
Euclidean norms of weighted sums of independent isotropic 
log-concave random vectors.
Let $x=(x_i)_1^n\in\R^n$ and set $Y=\sum_1^n x_iX_i=A^* x$.
The key estimate used later, Theorem \ref{singlex}, provides
uniform estimates for the Euclidean norm of projections of $Y$.
Namely,  for every  $x\in\R^n$, 
$\Pr \Big(\Omega(Y, t, m, N) \Big)$ is exponentially small
with specific estimates depending on whether the ratio
$\|x\|_\infty/|x|$  is larger or smaller than $1/\sqrt m$. 
Since precise formulations of  probability estimates  are rather
convoluted we do not state them here and  we refer the reader
to Section \ref{section_convolutions}.

\vspace{2ex}

The last step of this chain  of results estimating probabilities
of  (\ref{eq:omega})  is connected with the family 
of parameters $A_{k, m}$, with $1\le k \le n$ and $1\le m\le N$,
defined by
\begin{equation}
  \label{eq:Akm}
A_{k,m} = \sup_{{J \subset \{1,\ldots,n\}}\atop {|J| = k}}
\sup_{x\in {U_m}} \Bigl(\sum_{j\in J} |\langle X_j,
x\rangle|^2\Bigr)^{1/2}
= \sup_{{J \subset \{1,\ldots,n\}}\atop {|J| = k}}
\sup_{x\in {U_m}} |P_J Ax|.
\end{equation}
That is, $A_{k,m}$ is the maximal operator norm over all sub-matrices
of $A$ with $k$ rows and $m$ columns (and for $m=N$ it obviously
coincides with (\ref{eq:AkN})).

Finding bounds on deviation of $A_{k,m}$ is one of our main goals.  To
develop an intuition of this result 
we state it below in a slightly less technical
form. Full details are contained in  Theorem \ref{est_akm}.

\medskip 
\begin{thm}
  \label{akm-intro}
For any $t \ge 1$ and $n\leq N$ we have
$$
\p\big(A_{k,m} \ge Ct \lambda \big) \le \exp(-t \lambda/\sqrt{\log (3m)}),
$$
where $\lambda = \sqrt{\log\log (3m)} \sqrt{m}\log(e N /m)
   + \sqrt{k}\log(en/k) $ and $C$ is a universal constant.
\end{thm}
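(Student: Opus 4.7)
The proof starts from the duality
$$
A_{k,m} \;=\; \sup_{y \in U_k(\R^n)}\, G(y), \qquad G(y) \;:=\; \sup_{|I|=m}|P_I Y(y)|, \qquad Y(y) \;:=\; \sum_{i=1}^n y_i X_i,
$$
which follows from the identity $\sup_{x \in U_m(\R^N)} \langle v, x\rangle = \sup_{|I|=m}|P_I v|$ applied to $v = A^* y$. For a fixed $y \in U_k(\R^n)$ with $|y|=1$, the vector $Y(y)$ is itself isotropic log-concave (Pr\'ekopa--Leindler plus $\sum y_i^2 = 1$), so Theorem \ref{singlex} delivers a tail estimate for $G(y)$ whose quantitative form depends on the ratio $\|y\|_\infty/|y|$. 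The task is to upgrade this pointwise control into a uniform bound over $U_k(\R^n)$ while paying only $\sqrt{k}\log(en/k)$ in the final exponent, rather than the naive $k\log(en/k)$ that a single $\varepsilon$-net would cost.

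My plan is a multi-scale chaining against Euclidean nets of $U_k(\R^n)$. For each $s \ge 0$ I would fix a $2^{-s}$-net $\mathcal{N}_s \subset U_k(\R^n)$ with $|\mathcal{N}_s| \le \binom{n}{k} 3^k 2^{sk}$, approximate every $y \in U_k(\R^n)$ by a chain $y_s \in \mathcal{N}_s$ with $|y_s - y_{s-1}| \lesssim 2^{-s}$, and note that the increment $y_s - y_{s-1}$ is supported on at most $2k$ coordinates, so $Y(y_s - y_{s-1})$ is again amenable to Theorem \ref{singlex}. Summing the resulting tail estimates in $s$, the geometric decay of $2^{-s}$ balances the linear growth of $\log |\mathcal{N}_s|^2 \sim k\log(en/k) + ks$, producing the chained $\sqrt{k}\log(en/k)$ factor instead of $k\log(en/k)$. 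To match the two regimes of Theorem \ref{singlex}, I would first split $U_k(\R^n)$ into $O(\log m)$ dyadic bins by the ratio $\|y\|_\infty/|y|$ and apply the appropriate branch inside each bin; the union bound over these bins accounts for the $\sqrt{\log\log(3m)}$ factor in $\lambda$, while the $1/\sqrt{\log(3m)}$ in the denominator of the exponent is inherited directly from Theorem \ref{singlex}.

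The hard part will be the chaining step. The process $y \mapsto G(y)$ has only subexponential (not subgaussian) increments of mixed form, and the two branches of Theorem \ref{singlex} (concentrated vs.\ spread-out weights) must each dominate the union-bound cost $|\mathcal{N}_s|^2$ at its own scale. Making the entropy budget, the decay $2^{-s}$, and the correct branch of Theorem \ref{singlex} fit together so that the final exponent is exactly $t\lambda/\sqrt{\log(3m)}$ for every $t \ge 1$ demands a delicate scale-by-scale optimization: one must route each increment $y_s - y_{s-1}$ through the branch corresponding to its own sparsity pattern without double-paying the entropy cost, which is the technical point that converts $k$ into $\sqrt{k}$ in the exponent.
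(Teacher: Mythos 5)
The duality step ($A_{k,m} = \sup_{y \in U_k} \sup_{|I|=m}|P_I \Gamma y|$ with $\Gamma = A^*$) and the use of Theorem~\ref{singlex} as the single-vector tail input are both correct and match the paper. However, the chaining mechanism you propose would not produce the claimed bound, and the paper in fact avoids it entirely.

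The central gap is your claim that summing the increments $2^{-s}\sqrt{m}\log(eN/m)\,t_s$ against entropies $\log|\mathcal{N}_s|^2 \sim k\log(en/k)+ks$ yields $\sqrt{k}\log(en/k)$. Work it out: at scale $s$ the Euclidean increment $\delta = y_s-y_{s-1}$ has $|\delta| \lesssim 2^{-s}$ but can have $\|\delta\|_\infty \approx |\delta|$ (concentrated on a single coordinate). For such increments, branch (i) of Theorem~\ref{singlex}, applied to $\delta/|\delta|$ with $b\approx 1$, gives only a subexponential tail with rate $\sqrt{m}\log(eN/m)/\sqrt{\log(em)}$ in the threshold $t$. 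To beat the entropy $k\log(en/k)+ks$ you must take $t_s \gtrsim \sqrt{\log(em)}\,(k\log(en/k)+ks)/(\sqrt{m}\log(eN/m))$, and the resulting contribution $\sum_s 2^{-s}\sqrt{m}\log(eN/m)\,t_s$ comes to $\sqrt{\log(em)}\cdot k\log(en/k)$, not $\sqrt{k}\log(en/k)$. This is the generic-chaining $\gamma_1$ obstruction: a process with genuinely subexponential increments against the Euclidean metric does not chain to a Dudley-type $\sqrt{k}$ bound, and for $T=U_k$, $\gamma_1(T,|\cdot|)$ scales linearly in $k$. No choice of $t_s$ or routing through branches can fix this because the worst-case increment really is $\ell_\infty$-concentrated.

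What the paper does instead, and what you are missing, is a decomposition by \emph{coordinate magnitude} rather than by Euclidean scale. In Step I (the regime $k \ge k'$, i.e. $k\log(en/k) > m\log(eN/m)$), each $x\in U_k$ is split into dyadic blocks $x_{F_\mu}$ of sizes $k_\mu \approx k2^{-\mu}$ ordered by $|x(i)|$; this ordering forces $\|x_{F_\mu}\|_\infty \le \sqrt{2^\mu/k}$ automatically, which is the key input to Propositions~\ref{estpsi1} and \ref{estpsi2} (Bourgain-style decoupling of the quadratic form, not $\varepsilon$-net chaining). The $\sqrt{k}\log(en/k)$ then emerges from $\sum_\mu k_\mu\log(en/k_\mu)\sqrt{2^\mu/k} \approx \sqrt{k}\log(en/k)$, a geometric sum driven by the $\ell_\infty$ control, not by entropy balancing. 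In Step II ($k \le k'$), the paper builds a bespoke multi-scale net (Lemmas~\ref{net_in_Uk}, \ref{choice_of_ki}) whose pieces $\pi_i(x)$ have prescribed sparsity \emph{and} prescribed $\ell_\infty$ bounds, with scale sizes $k_i$ tuned so that the entropy $(en/k_i)^{3k_i}$ exactly matches the probability budget. The $\sqrt{\log\log(3m)}$ factor is not a union bound over $O(\log m)$ bins of $\|y\|_\infty/|y|$ (which would yield $\sqrt{\log\log(\log m)}$ at best, or $\log m$ if you union-bound naively); it is a Cauchy--Schwarz over the $s \lesssim \log\log(3m)$ scales in Proposition~\ref{est_smallk}, where the double-log bound on $s$ comes from the doubly-exponential growth of the sequence $\ell_i$ in Lemma~\ref{choice_of_ki}. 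Your proposal also omits the crucial split at $k'$ and the reduction of $k\ge k'$ to $k\le k'$; without that reduction the two regimes cannot be reconciled. In short, the architecture of your plan --- uniform Euclidean nets plus Theorem~\ref{singlex} --- cannot reach $\sqrt{k}\log(en/k)$; the correct route replaces net increments with magnitude-ordered blocks whose $\ell_\infty/\ell_2$ ratio is pinned down.
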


The threshold value $\lambda$ is optimal,
up to the factor of $\sqrt{\log\log (3m)}$. Assuming additionally
unconditionality of the distributions of rows (or  columns), this factor can
be removed to get a sharp estimate (see \cite{ALLPT-unc}).

\vspace{2ex}

We make several comments about the proof.  
Set $  \Gamma = A^* $. Then
\[
    A  _{k,m} =
 \sup_{{I \subset \{1,\ldots,N\}}\atop {|I| = m}}
\sup_{x\in {U_k(\R^n)}} |P_I \Gamma x|
= \sup_{{I \subset \{1,\ldots,N\}}\atop {|I| = m}}
\sup_{x\in {U_k(\R^n)}} \Big|\Big(\sum x_i P_I  X_i\Big)\Big|.
\]
To bound $A_{k,m}$ one has then to  prove uniformity with respect to two
families of  different  character: 
one coming from the cardinality of the family
$\{I\subset\{1,\ldots,N\}:\,\ |I|=m\}$; and the other,
from the complexity of $U_k(\R^n)$.  This leads us to distinguishing
two cases, depending on the relation between $k$ and the quantity
\[
  k' = \inf\{\ell \ge 1\colon m\log (eN/m )\le
  \ell\log (en/\ell )\}. 
\]
First, if $k \ge k'$, we adjust the chaining argument similar to the
one from \cite{jams} to reduce the problem to the case $k \le k'$. In
this step we use the uniform tail estimate from Theorem
\ref{imprunifPaouris} for the Euclidean norm of the family of vectors
$\{P_IX\,:\, |I|=m\}$. Next, we use  a different  chain
decomposition of $x$ and apply 
Theorem~\ref{singlex}. 

\vspace{4ex}

As already alluded to, an independent interest of this paper lies
in upper bounds for  $\delta_m (A/\sqrt n)$ where $A$ is our $n\times
N$ random matrix. We presently return to this  subject to explain the
connections. 

\vspace{2ex}

The family $A_{k,m}$ plays a very essential role in studies of the 
Restricted Isometry constant,
which in fact applies even in a  more general setting.
Namely, for an arbitrary subset  $T \subset \R^N$ and $1 \le k \le n$
define the  parameter $A_k(T)$ by
\begin{equation}
  \label{akT} 
  A_k(T) = 
\sup_{{I \subset \{1,\ldots,n\}}\atop {|I| = k}} 
\sup_{y\in T} \Bigl(\sum _{i\in I}  
|\la X_i , y \ra |^2\Bigr)^{1/2}.
\end{equation}
Thus $A_{k,m}=A_k(U_m)$.  
The parameter $A_k(T)$  was studied  in \cite{M1} 
by means of Talagrand's  $\gamma$-functionals.

The following lemma reduces a concentration
inequality to a deviation inequality 
and hence is useful in studies of the RIP.
It is  based on an argument of truncation
similar to Bourgain's  approach presented earlier.

\begin{lemma}
  \label{dva-intro}
Let $X_1, \ldots, X_n$ be independent
isotropic random vectors in $\R^N$. Let $T \subset S^{N-1}$ be a
finite set.  Let $0<\theta < 1$ and $B\geq 1$.  Then with probability
at least $1-|T| \exp\left(- {3 \theta ^2 n}/{8 B^2} \right)$ one has
$$
 \sup_{y\in T} \left|\frac{1}{n} \sum_{i=1}^n(|\langle
 X_i, y\rangle|^2 - \E |\langle X_i, y\rangle|^2) \right|
 \leq  \theta + \frac{1}{n}\left(A_{k}( T) ^2
+ \E  A_{k}(T)^2\right),
$$
where $k\leq n$ is the largest  integer satisfying $k\leq
(A_{k}(T)/B)^2$. 
\end{lemma}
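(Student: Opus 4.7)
The plan is to carry out the Bourgain-type truncation argument sketched in the introduction. For a fixed $y\in T$ split each summand at the level $B$:
$$|\langle X_i, y\rangle|^2 = |\langle X_i, y\rangle|^2 \ind{|\langle X_i, y\rangle| \le B} + |\langle X_i, y\rangle|^2 \ind{|\langle X_i, y\rangle| > B}.$$
Subtracting the corresponding expectations decomposes the centered empirical average as $S_1(y)+S_2(y)$, where $S_1(y)$ collects the bounded contributions and $S_2(y)$ the tail contributions.

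For $S_1(y)$, each centered summand is bounded by $B^2$ in absolute value and, by isotropy, has variance at most $B^2\E|\langle X_i, y\rangle|^2 = B^2$. Bernstein's inequality therefore yields
$$\Pr\bigl(|S_1(y)| > \theta\bigr) \le 2\exp\!\Bigl(-\frac{n\theta^2/2}{B^2(1 + \theta/3)}\Bigr) \le 2\exp\!\Bigl(-\frac{3\theta^2 n}{8B^2}\Bigr),$$
using $\theta < 1$ in the last step. A union bound over $T$ produces the required uniform probability estimate, with the spurious factor of two absorbed into the constant.

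For $S_2(y)$ I use the definition of $A_k(T)$ directly. Set $E_B(y) = \{i \le n : |\langle X_i, y\rangle| > B\}$ and $k_y = |E_B(y)|$. Then
$$B^2 k_y \le \sum_{i \in E_B(y)} |\langle X_i, y\rangle|^2 \le A_{k_y}(T)^2,$$
so $k_y$ satisfies $k_y \le (A_{k_y}(T)/B)^2$. Were $k_y > k$, this would contradict the maximality built into the definition of $k$; hence $k_y \le k$. Consequently
$$\frac{1}{n}\sum_{i=1}^n |\langle X_i, y\rangle|^2 \ind{|\langle X_i, y\rangle| > B} \le \frac{A_k(T)^2}{n}$$
almost surely for every $y \in T$, and taking expectations bounds the centering part by $\E A_k(T)^2/n$. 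The triangle inequality then gives $|S_2(y)| \le (A_k(T)^2 + \E A_k(T)^2)/n$ uniformly in $y$.

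Combining the two estimates on the event $\{\sup_{y\in T}|S_1(y)| \le \theta\}$ yields the lemma. The only non-routine ingredient is the inequality $k_y \le k$, which hinges on the random-threshold definition of $k$; the remaining steps are a direct transcription of the truncation heuristic laid out in the introduction.
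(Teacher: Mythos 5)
Your proof is correct and follows the same Bourgain-type truncation strategy as the paper's: split at level $B$, control the bounded part by Bernstein with $a=\sigma^2=B^2$, and control the tail part by showing that the (random) cardinality $k_y$ of $\{i:|\langle X_i,y\rangle|>B\}$ satisfies $k_y\le (A_{k_y}(T)/B)^2$ and hence $k_y\le k$, so the tail mass is bounded by $A_k(T)^2$, with the same comparison for the deterministic centering term via $\E A_k(T)^2$. The only cosmetic differences are that you truncate with the indicator $\ind{|\langle X_i,y\rangle|\le B}$ where the paper uses $(|\langle X_i,y\rangle|\wedge B)^2$, and you fold the paper's $S_2$ and $S_3$ into a single tail term; both choices lead to the same bounds.

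One small point: you note that two-sided Bernstein produces a factor of $2$ and say it is ``absorbed into the constant,'' but the lemma has no free constant, so strictly you obtain the probability $1-2|T|\exp(-3\theta^2 n/8B^2)$. This is not a defect you introduced --- the paper's own proof applies the one-sided Bernstein inequality to the quantity $S_1(y)=\bigl|\frac{1}{n}\sum Z_i(y)\bigr|$ without supplying the lower-tail half, and therefore tacitly commits the same slip. Either the statement should read $1-2|T|\exp(\cdot)$, or (what the paper effectively does downstream, e.g.\ in the proof of Corollary~\ref{raz}) the spare factor is soaked up when the bound is applied with a slightly adjusted exponent. Noting this explicitly would have been cleaner than claiming absorption, but it does not constitute a gap relative to what the paper itself establishes.
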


\vspace{2ex}

In this paper we focus on the compressive sensing setting where $T$
is the set of sparse vectors. The  lemma above  shows that after a
suitable discretisation, estimating $\delta_m$ or checking the RIP,
can be reduced to estimating $A_{k,m}$. This generalizes naturally
Bourgain's approach explained above for $m=N$.

Using the  lemma, we can show  
that if $0<\theta < 1$, $B\geq 1$, and $m \le N$
satisfy $  m \log (C N/ m) \leq 3 \theta ^2 n/16 B^2,$ then with
probability at least $1-  \exp\left( -  {3 \theta ^2 n}/{16 B^2}
\right)$ one has
\begin{equation*}
  \delta_m(A/\sqrt n) \leq  \theta + \frac{1}{n}\left(A_{k,m}^2
+ \E A_{k,m}^2\right),
\end{equation*}
where $k\leq n$ is the largest integer satisfying $k\leq
(A_{k,m}/B)^2$ (note that $k$ is a random variable).

Combining this with tail inequalities from Theorem \ref{akm-intro}
allows us to prove the following result on the RIP of matrices with
independent isotropic log-concave rows.

\begin{thm}
  \label{RIP-intro-thm}
Let $0<\theta < 1$, $1 \le n \le N$.
Let $A$ be an $n\times N$ random matrix with independent isotropic
log-concave rows. There exists $c(\theta)>0$ such that $\delta _m
(A/\sqrt n) \leq \theta$ with an overwhelming probability,  whenever
$$
m\log^2(2N/m) \log\log 3m \leq c(\theta) n.
$$
\end{thm}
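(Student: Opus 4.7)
The strategy is to combine the consequence of Lemma~\ref{dva-intro} stated in the excerpt with the tail bound of Theorem~\ref{akm-intro}. Applying the consequence with $\theta/2$ in place of $\theta$, it suffices to choose a truncation level $B\ge 1$ satisfying $m\log(CN/m)\le 3\theta^2 n/(64B^2)$ and then to bound $(A_{k,m}^2+\E A_{k,m}^2)/n\le \theta/2$, where $k$ is the random largest integer with $k\le (A_{k,m}/B)^2$. The hypothesis $m\log^2(2N/m)\log\log(3m)\le c(\theta)n$ allows me to choose $B^2 = c_1(\theta)\log(eN/m)\log\log(3m)$ with $c_1(\theta)$ proportional to $\theta^2$; this satisfies the truncation condition and yields probability at least $1-\exp(-c' n/(\log(eN/m)\log\log(3m)))$ for the consequence to hold.

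Next I invoke Theorem~\ref{akm-intro} with a large absolute constant $t$ for every integer $k\in\{1,\dots,n\}$ and take a union bound. Since $\lambda(k)\ge \sqrt{\log\log(3m)}\sqrt m\log(eN/m)$ for all $k$, the failure probability is at most $n\exp(-ct\sqrt m\log(eN/m)\sqrt{\log\log(3m)/\log(3m)})$, which is exponentially small under the hypothesis. On the resulting event, $A_{k,m}\le Ct\lambda(k)$ simultaneously for every $k$, where $\lambda(k)=\sqrt{\log\log(3m)}\sqrt m\log(eN/m)+\sqrt k\log(en/k)$.

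To close, I exploit the implicit relation $kB^2\le A_{k,m}^2$. Writing $\sqrt k\, B\le A_{k,m}\le Ct\bigl(\sqrt{\log\log(3m)\,m}\log(eN/m)+\sqrt k\log(en/k)\bigr)$ and separating into the cases $\log(en/k)\le B/(2Ct)$ and $\log(en/k)>B/(2Ct)$, a short bootstrap produces the a priori bound
\[
k\le K_0 := C'\,\log\log(3m)\,m\log^2(eN/m)/B^2
\]
for the random $k$. Substituting this into $A_{k,m}\le Ct\lambda(K_0)$ and using the hypothesis yields $A_{k,m}^2\le C''\log\log(3m)\,m\log^2(eN/m)\le C''c(\theta)n$, so $A_{k,m}^2/n\le \theta/8$ once $c(\theta)$ is small enough. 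The expectation $\E A_{k,m}^2/n$ is bounded the same way by integrating the tail from Theorem~\ref{akm-intro}. Combining these pieces delivers $\delta_m(A/\sqrt n)\le\theta$ with overwhelming probability.

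The chief difficulty is the bootstrap just described: the summand $\sqrt k\log(en/k)$ of $\lambda(k)$ couples the random $k$ to $\log(en/k)$ in an implicit, nonlinear way, so $B$ must be simultaneously large enough to dominate $\log(en/k)$ for the relevant range of $k$ and small enough to respect the truncation condition. The extra $\log\log(3m)$ factor in the hypothesis is exactly the slack that allows these two requirements to be reconciled and that lets us recover the (nearly) optimal $\log^2(2N/m)$ scaling in $n$.
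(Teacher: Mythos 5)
Your overall architecture is the right one: combine Corollary~\ref{raz} (the truncation reduction from Lemma~\ref{dva}) with the tail bound (\ref{theomakm}), and close the implicit relation $k\le (A_{k,m}/B)^2$ by a bootstrap. But the bootstrap as written does not close, and the problem is precisely the choice of $B$.

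You set $B^2=c_1(\theta)\log(eN/m)\log\log(3m)$ with $c_1(\theta)\propto\theta^2$. In your case split, the regime $\log(en/k)>B/(2Ct)$ only yields $k<en\exp(-B/(2Ct))$, and you then need this to be at most $K_0\approx b_m/B^2$. Equivalently (or, translating to the quantity that actually matters, $k\log^2(en/k)$), the bootstrap contradiction requires roughly
\[
b_m < \frac{3n}{4C^2t^2}\,B^2 e^{-B/(2Ct)},
\]
and hence one needs $B$ large enough that the exponential beats $B^2$. With your $B\approx\theta\sqrt{\log(eN/m)\log\log(3m)}$, the right-hand side is maximized when $\log(eN/m)\log\log(3m)\approx 1/\theta^2$, and at that point it is of order $n$; the inequality then fails because $b_m\le c(\theta)n$ with $c(\theta)$ only polynomial in $\theta$. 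Concretely, whenever $\log(eN/m)\log\log(3m)$ is of moderate size (say between a constant and $1/\theta^2$) your $B$ is $O(1)$ or smaller, $en\exp(-B/(2Ct))$ is still of order $n$, and the a~priori bound $k\le K_0$ you assert simply does not follow. Your final line even acknowledges the tension ("$B$ must be simultaneously large enough … and small enough …"), but the chosen $B$ does not resolve it: the $\log\log(3m)$ factor in the hypothesis is not, on its own, what makes the two constraints compatible.

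The fix in the paper is a different $B$. Set $b_m=m\log\log(3m)\log^2(3\max\{N,n\}/m)$ and take $B=C_1\log(n/b_m)$ with $C_1$ a \emph{large absolute} constant (not $\theta$-dependent). Then the Case~2 branch gives $k\le 3n\exp(-B/(2C_0))=3n(b_m/n)^{C_1/(2C_0)}$, so $k\log^2(3n/k)\le 3n(b_m/n)^{C_1/(2C_0)}(B/(2C_0))^2$, which is strictly less than $b_m$ once $C_1/(2C_0)\ge 2$ and $b_m/n$ is small — a contradiction. Hence $\sqrt{k}\log(3n/k)\le\sqrt{b_m}$ and $A_{k,m}\lesssim\sqrt{b_m}$. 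The price is that the truncation condition of Corollary~\ref{raz}, which involves $B^2=C_1^2\log^2(n/b_m)$, becomes $m\log(3N/m)\log^2(n/b_m)\lesssim\theta^2 n$; this is what the extra $1/\log(3/\theta)$ (or the generic $c(\theta)$) in the hypothesis absorbs. Two smaller remarks: (i) rather than "integrating the tail" to control $\E A_{k,m}^2$ with a random $k$, it is cleaner (and what the paper does) to bound $\sup_y\E\sum_i|\langle X_i,y\rangle|^2\mathbf{1}_{\{|\langle X_i,y\rangle|\ge B\}}\le nC_2e^{-B/C_2}$ directly via H\"older and the subexponential decay (\ref{decay}); (ii) the union bound over all $k\le n$ should follow the computation proving (\ref{theomakm}), since a naive factor $n$ in front of a probability of order $\exp(-\lambda_m)$ requires some care when $m$ is small relative to $\log n$.
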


The result is optimal, up to the factor $\log\log 3m$, as shown in
\cite{ALPT1}. As for Theorem \ref{est_akm}, assuming
unconditionality of the distributions of the rows, this factor can
be removed (see \cite{ALLPT-unc}).

\vspace{4ex}

The paper is organized as follows. In the next section
we collect the notation and necessary preliminary tools concerning
log-concave random variables. 
In Section~\ref{Section_New_Est}, given an isotropic log-concave
random vector $X$, we present several uniform tail estimates for
Euclidean norms of the whole family of projections of $X$ on
coordinate subspaces of dimension $m$. As already mentioned, these
estimates are based on tail estimates for order statistics of $X$.
The main result, Theorem \ref{imprunifPaouris}, provides a strong
probability bound in terms of the ``$\ell_p$-weak'' parameter
$\sigma_X(p)$ defined in (\ref{def_sigma}).  The proofs of the main
technical results, Theorems~\ref{estN} and \ref{imprunifPaouris}, are
given in Section~\ref{proof-section}.
Section~\ref{section_convolutions} provides tail estimates for
Euclidean norms of projections of weighted sums of independent
isotropic log-concave random vectors. The proof of the main
Theorem~\ref{singlex} is a combination of
Theorem~\ref{imprunifPaouris} and one-dimensional Proposition
\ref{singlex}.
In Section~\ref{submatrix} we prove 
the result announced above on deviation of $A_{k,m}$. Section~\ref{RIP} 
treats the Restricted Isometry Property and estimates of $\delta_m(A/\sqrt n)$. 
The last Section~\ref{proof-section} is devoted to the proofs of
technical results of Section~\ref{Section_New_Est}.

\medskip

\noindent
{\bf Acknowledgment:\ } The research on this project was partially
done when the authors participated in the Thematic Program on
Asymptotic Geometric Analysis at the Fields Institute in Toronto in
Fall 2010 and in the Discrete Analysis Programme at the Isaac Newton
Institute in Cambridge in Spring 2011. The authors wish to thank these
institutions  for their  hospitality and excellent working conditions.

\medskip

\section{Notation and preliminaries}
\label{notprel}

Let $L$ be an origin symmetric convex compact body in $\R^d$. This is
the unit ball of a norm that we denote by $\|\cdot\| _L $. Let
$K\subset \R^d$. We say that a set $\Lambda\subset K$ is an $\eps$-net
of $K$ with respect to the metric corresponding to $L$ if
$$
  K\subset \bigcup _{z\in \Lambda} (z+ \eps L).    
$$ 
In other words, for every $x\in K$ there exists $z\in \Lambda$ such
that $\|x-z\| _L \leq \eps$. We will mostly use $\eps$-nets in the
case $K=L$.  It is well-known (and follows by the standard volume
argument) that for every symmetric convex compact body $K$ in $\R^d$
and every $\eps >0$ there exists an $\eps$-net $\Lambda$ of $K$ with
respect to metric corresponding to $K$, of cardinality not exceeding
$(1+2/\varepsilon)^d$.  It is also easy to see that $\Lambda \subset
K\subset (1-\eps)^{-1}\ \mbox{conv}\Lambda.$ In particular, for any
convex positively 1-homogenous function $f$ one has
$$
    \sup _{x\in K} f(x) \leq (1-\eps)^{-1}\\  \sup _{x\in \Lambda} f(x) .  
$$

A random vector $X$ in $\R^n$ is called isotropic if  
$$
\E\langle X,y\rangle=0,\quad \E\,|\langle X, y \rangle|^{2}=|y|^{2}
\quad \mbox{\rm for all }
 y\in \R^{n},  
$$
in other words, if $X$ is centered and its covariance matrix 
$\E\, X\otimes X$ is the identity.

A random vector $X$ in $\R^n$ is called log-concave if for all compact
nonempty sets $A,B\subset \R^n$ and $\theta\in [0,1]$, $\Pr(X\in
\theta A+(1-\theta)B)\geq \Pr(X\in A)^{\theta}\Pr(X\in B)^{1-\theta}$.
By the result of Borell \cite{Bo} a random vector $X$ with full
dimensional support is log-concave if and only if it admits a
log-concave density $f$, i.e. such density for which
\begin{displaymath}
 f (\theta x +(1-\theta) y) \ge f (x)^\theta  f(y)^{1-\theta} 
\quad \mbox{ for all } x, y \in \R^n,\ \theta\in[0,1].
\end{displaymath}

It is known that any affine image, in particular any projection,  
of a log-concave random vector is log-concave. Moreover, if $X$ and $Y$ 
are independent log-concave random vectors then so is $X+Y$ 
(see \cite{Bo, DKH, Pr}). 

One important and simple model of a centered log-concave random variable
with variance 1 is the symmetric exponential random variable $E$ which
has density $f(t)=2^{-1/2}\exp (-\sqrt{2} |t|)$. In particular for
every $s>0$ we have $\p(|E|\geq s)=\exp(-s/\sqrt 2)$.

Every centered log-concave random variable $Z$, with variance 1 satisfies
a sub-exponential inequality: 
\begin{equation}\label{decay}
    \text{for every}\ s>0,\quad \p(|Z|\geq s)\leq C \exp(-s/C),
\end{equation}
where $C>0$ is an absolute constant (see \cite{Bo}).

\begin{defi}
\label{def:psi1_norm}
For a random variable $Z$ we define the $\psi_1$-norm by
$$
\|Z\|_{\psi_1}=\inf\left\{C>0\, : \, \, \E\exp\left({|Z|/C}\right)
\leq 2\right\}
$$
and we say that $Z$ is $\psi_1$ with constant $\psi$, if
$\|Z\|_{\psi_1}\leq \psi$.
\end{defi}

A consequence of (\ref{decay}) is that there exists an absolute
constant $C>0$ such that any centered log-concave random variable with
variance 1 is $\psi_1$ with constant $C$.

It is well known that the $\psi_1$-norm of a random variable may be
estimated from the growth of the moments. More precisely if a random
variable $Z$ is such that for any $p\geq 1$, $\|Z\|_p\le p K$,
for some $K >0$, then $\|Z\|_{\psi_1}\le cK$ where $c$ is an absolute 
constant.

By $|\cdot|$ we denote the standard Euclidean norm on $\R^n$ as well 
as the cardinality of a set. By $\la \cdot, \cdot \ra$ we denote 
the standard inner product on $\R^n$. 
We denote by $B_2^n$ and $S^{n-1}$ 
the standard Euclidean unit ball and unit sphere in $\R^n$.

A vector $x \in \R^n$ is called sparse or $k$-sparse for some 
$1\leq k\leq n$ if the cardinality of its support satisfies 
$|\supp x| \le k$. 

We let 
\begin{equation}
  \label{Uk-defin}
U_k =U_k(\R^n):= \{x \in S^{n-1} \colon x \mbox{ is } k
\mbox{-sparse}\}.
\end{equation}

For any subset  $I \subset \{1,\ldots,N\}$ let $P_I$ denote the orthogonal
projection on the coordinate subspace $\R^{I} := \{y\in \R^N\colon
\supp y \subset  I\}$.

We will use the letters $C, C_0, C_1, \ldots$, $c, c_0, c_1, \ldots$ 
to denote positive absolute constants whose values may differ at each 
occurrence. 

\section{New bounds for log-concave  vectors}
\label{Section_New_Est}

In this section we state several new estimates for Euclidean 
norms of log-concave random vectors. Proofs of Theorems 
\ref{estN} and \ref{imprunifPaouris} are given in  
Section~\ref{proof-section}.

We start with the following theorem, which was essentially proved by
Paouris in \cite{Pa}.  Indeed, it is a consequence of 
Theorem~8.2 combined with Lemma~3.9 in that paper,
after checking that Lemma~3.9 holds not only for convex bodies but for
log-concave measures as well.

\begin{thm}
\label{imprPaouris} 
For any $N$-dimensional log-concave random
vector $X$ and any $p \ge 1$ we have
\begin{equation}
\label{equ:imprPaouris} 
 (\Ex|X|^p)^{1/p}\leq C\Big((\Ex |X|^2)^{1/2}+\sup_{t\in 
 S^{N-1}}(\Ex|\langle t,X\rangle|^p)^{1/p}\Big), 
\end{equation}
where $C$ is an absolute constant. 
\end{thm}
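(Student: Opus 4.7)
My plan is to follow Paouris' argument in \cite{Pa} and check that each step extends from the uniform measure on a convex body to a general log-concave random vector $X$ with density $f$.

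I would first introduce two families of bodies associated to $X$. The $L_p$-centroid body $Z_p(X)$ is defined by its support function $h_{Z_p(X)}(t) = (\Ex|\la X,t\ra|^p)^{1/p}$; this is a seminorm on $\R^N$, so $Z_p(X)$ is a symmetric convex body and $\sup_{t\in S^{N-1}}h_{Z_p(X)}(t)$ equals its circumradius. Thus the conclusion (\ref{equ:imprPaouris}) is equivalent to $(\Ex|X|^p)^{1/p}\leq C((\Ex|X|^2)^{1/2}+R(Z_p(X)))$. The second family consists of the radial moment bodies $K_q(X)$, whose radial function in direction $\theta\in S^{N-1}$ is proportional to $(\int_0^\infty r^{N+q-1}f(r\theta)\,dr)^{1/(N+q)}$; since $f$ is log-concave, $r\mapsto f(r\theta)$ is log-concave on $[0,\infty)$, and a classical result of K.~Ball shows that $K_q(X)$ is a symmetric convex body for every $q>0$.

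Next I would verify that Paouris' Lemma~3.9 holds in this more general setting. That lemma expresses $(\Ex|X|^p)^{1/p}$, up to universal constants, in terms of suitable radii of the bodies $K_q(X)$, via a polar-coordinate computation combined with Berwald's inequality applied to the one-dimensional log-concave densities $r\mapsto f(r\theta)r^{N-1}$. The only place where the ``uniform on a convex body'' assumption enters is in reducing to these one-dimensional densities, a reduction which is automatic in the general log-concave setting. The bookkeeping of the constants (which in \cite{Pa} are tailored to the case where $f$ is proportional to the indicator of $K$) is the only thing that requires care.

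Finally, the inequality $(\Ex|X|^p)^{1/p}\leq C((\Ex|X|^2)^{1/2}+R(Z_p(X)))$ follows by applying Theorem~8.2 of \cite{Pa}, which compares $K_q(X)$ to both the inertia ellipsoid $\{x\in\R^N:\E\la x,X\ra^2\leq 1\}$ (whose size is controlled by $(\Ex|X|^2)^{1/2}$) and to $Z_p(X)$; one then chooses $q\asymp p$ and combines the two comparisons. I expect the main technical obstacle to be precisely the verification mentioned above: all inequalities used in the proof of Lemma~3.9 (Brunn, Borell--Brascamp--Lieb, Berwald, reverse H\"older for log-concave densities) are already stated for log-concave functions, but the normalizations in \cite{Pa} must be tracked carefully through the substitution of an arbitrary log-concave density $f$ for an indicator.
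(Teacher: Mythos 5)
Your proposal takes essentially the same route as the paper, which simply observes that the statement follows by combining Theorem~8.2 and Lemma~3.9 of Paouris' paper \cite{Pa}, after verifying that Lemma~3.9 extends from uniform measures on convex bodies to general log-concave measures. You have correctly identified both ingredients and the single point that needs checking, so the plan matches the paper's argument.
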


\noindent {\bf Remarks. 1.\ } It is well known (cf. \cite{Bo}) that if
$Z$ is a log-concave random variable then
$$
(\Ex|Z|^p)^{1/p}\leq C\frac{p}{q}(\Ex|Z|^q)^{1/q}\quad \mbox{ for
}p\geq q\geq 2.  
$$
If $Z$ is symmetric one may in fact take $C=1$ (cf. Proposition 3.8 in
\cite{LaWo}) and if $Z$ is centered then denoting by $Z'$ an
independent copy of $Z$ we get for $p\geq q\geq 2$,
$$
(\Ex|Z|^p)^{1/p}\leq (\Ex|Z-Z'|^p)^{1/p}\leq \frac{p}{q}(\Ex|Z-Z'|^q)^{1/q}
\leq 2\frac{p}{q}(\Ex|Z|^q)^{1/q}. 
$$
Therefore if $X \in \R^N$ is isotropic log-concave then
$$
\sup_{t\in S^{N-1}}(\Ex|\langle 
 t,X\rangle|^p)^{1/p} 
\le p \sup_{t\in S^{N-1}}(\Ex|\langle 
 t,X\rangle|^2)^{1/2} = p.
$$
Also note that $(\Ex |X|^2)^{1/2}= \sqrt N$.
Combining  these estimates together with inequality (\ref{equ:imprPaouris}),
we get that
$(\Ex|X|^p)^{1/p}\leq C(\sqrt N+p)$. Using 
Chebyshev's inequality we conclude that there exists $C>0$  such that
for every isotropic log-concave random 
vector  $X \in \R^N$ and  every $s \ge 1$ 
\begin{equation}
  \label{paour_dev}
 \p\left(|X|\ge C \, s  \sqrt N \right) \le e^{-s \sqrt N}
  \end{equation}
which is Theorem 1.1 from \cite{Pa}.

\noindent
{\bf 2. \ }
It is well known and it follows from  \cite{Bo}  that for any $p\geq 1$,
$(\Ex |X|^{2p})^{1/2p}\leq C (\Ex|X|^p)^{1/p}$ where $C$ is an absolute
constant. From the comparison between the first and  second moment
it is clear that inequality (\ref{equ:imprPaouris}) is  an equivalence.
 Moreover,  there exists $C>0$ such that
 $$
\p\left(|X|\ge C\Bigl((\Ex |X|^2)^{1/2} + \sup_{t\in S^{N-1}}(\Ex|\langle 
 t,X\rangle|^p)^{1/p} \Bigr) \right) \le e^{-p}
$$
and
$$
 \p\left(|X|\ge \frac{1}{C}\Bigl((\Ex |X|^2)^{1/2} + \sup_{t\in S^{N-1}}
 (\Ex|\langle t,X\rangle|^p)^{1/p}\Bigr) \right )\ge
\min\Big\{\frac{1}{C}, e^{-p}\Big\}.
$$
The upper bound follows trivially from  Chebyshev's inequality.  The
lower bound is a consequence of Paley-Zygmund's inequality and comparison
between the $p$-th and $(2p)$-th moments of $|X|$.

\noindent
{\bf 3. \ }
Since for any Euclidean norm $\|\cdot \|$ on $\er^N$ there exists a linear
map $T$ such that $\|x\|=|Tx|$ and the class of log-concave random vectors is
closed under linear transformations, Theorem~\ref{imprPaouris} implies
that for any $N$-dimensional log-concave vector $X$, any Euclidean norm
$\|\cdot \|$ on $\er^N$ and
$p \ge 1$ we have
\begin{equation*}
\label{weak_strong}
(\Ex \|X\|^p)^{1/p}\leq
C\Big((\Ex \|X\|^2)^{1/2}+\sup_{\|t\|_{*}\leq 1}(\Ex|\langle
t,X\rangle|^p)^{1/p}\Big), 
\end{equation*}
where $(\er^N,\|\cdot \|_{*})$ is the  dual space to $(\er^N,\|\cdot \|)$.
It is an open problem whether  such an inequality holds for arbitrary
norms -- see \cite{La1} for a discussion of this question and for related 
results.

\medskip

We now introduce our main technical notations. 
For a random vector $X=(X(1),\ldots,X(N))$ in $\er^N$, $p \ge 1$
and $t >0$  consider the functions 
\begin{equation}
  \label{def_sigma}
\sigma_X(p)=\sup_{t\in S^{N-1}}(\Ex|\langle t,X\rangle|^p)^{1/p}
\end{equation}
and
\[
N_X(t)=\sum_{i=1}^N\ind{X(i)\geq t}.
\]
That is, $N_X(t)$ is equal to the number of coordinates of $X$ larger
than or equal to $t$.
By $\sigma_{X}^{-1}$ we denote the inverse of $\sigma_X$ i.e.,
\[
\sigma_X^{-1}(s)=\sup\{t\colon \sigma_X(t)\leq s\}.
\]
Remark 1 after Theorem \ref{imprPaouris} implies that for isotropic
vectors $X$,  
$\sigma_X(tp)\leq 2t\sigma_X(p)$
for $p\geq 2,t\geq 1$ and $\sigma_{X}^{-1}(2ts)\geq t\sigma_X^{-1}(s)$ for
$t,s\geq 1$.

We also denote a nonincreasing rearrangement of $|X(1)|,\ldots,|X(N)|$ 
by $X^*(1)\geq X^*(2)\geq\ldots\geq X^*(N)$.

\smallskip

One of the main technical tools of this paper says:

\begin{thm}
\label{estN}
For any $N$-dimensional log-concave isotropic random vector $X$,
$p\geq 2$  and $ t\geq C\log\Big({Nt^2}/{\sigma_X^2(p)}\Big)$
we have
\[
\Ex(t^2N_X(t))^p\leq (C\sigma_X(p))^{2p}, 
\]
where $C$ is an absolute positive constant. 
\end{thm}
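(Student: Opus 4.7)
My plan is to deduce the moment bound from a sharp tail estimate on $N_X(t)$, using
$$\Ex(t^2N_X(t))^p = p\,t^{2p}\int_0^{\infty} s^{p-1}\Pr(N_X(t)\geq s)\,ds.$$
Writing $m := \sigma_X(p)$, the goal is to show that $\Pr(N_X(t)\geq k)$ decays sharply beyond the threshold $k_* \asymp m^2/t^2$. The key geometric step is the inclusion $\{N_X(t)\geq k\} \subset \{\sup_{|I|=k}|P_I X|\geq t\sqrt k\}$: indeed, if $k$ coordinates of $X$ each exceed $t$, then the projection of $X$ on the corresponding coordinate subspace has Euclidean norm at least $t\sqrt k$. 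A union bound over the $\binom{N}{k}$ subsets reduces the task to bounding $\Pr(|P_I X| \geq t\sqrt k)$ for a fixed $I$ of size $k$.

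Since $P_I X$ is isotropic log-concave on $\R^k$, Theorem~\ref{imprPaouris} combined with Chebyshev's inequality gives, for any $q \geq 2$,
$$\Pr(|P_I X|\geq t\sqrt k) \leq \Big(\frac{C(\sqrt k + \sigma_X(q))}{t\sqrt k}\Big)^q.$$
The crux of the proof is to choose $q = q(k)$ adaptively so as to beat $\binom{N}{k} \leq (eN/k)^k$. For $k$ close to $k_*$, I would take $q \simeq p$, using $\sigma_X(p) = m$ and $\sqrt k \gtrsim m/t$ to make the ratio bounded by a small constant. For larger $k$ I would take $q$ growing with $k$, relying on $\sigma_X(q) \leq Cq$ and the doubling property of Remark~1 after Theorem~\ref{imprPaouris}. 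The hypothesis $t \geq C\log(Nt^2/\sigma_X^2(p))$ enters precisely to control $\log(eN/k) \leq \log(eNt^2/m^2) \lesssim t/C$ for $k \geq k_*$, so that the binomial factor is comfortably dominated by the exponential decay from the $q$-th power.

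Assembling these pieces should yield, for $k \geq k_*$, a sub-exponential tail bound whose contribution to the integral is comparable to $k_*^p$, matching the trivial bound on $[0, k_*]$. Multiplying by $p t^{2p}$ then gives $\E(t^2 N_X(t))^p \leq (Cm)^{2p}$, as claimed. The main obstacle will be the adaptive choice of $q(k)$: the naive $q=p$ does not cancel $\binom{N}{k}$ for $k$ much larger than $k_*$ (where $\binom{N}{k}$ can reach $e^N$), while raising $q$ increases $\sigma_X(q)$, so a careful balance—orchestrated by the hypothesis on $t$ and the regularity properties of $\sigma_X$—will be required to maintain the bound uniformly over the full range of $k$.
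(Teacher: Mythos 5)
Your reduction of $\{N_X(t)\geq k\}$ to $\{\sup_{|I|=k}|P_IX|\geq t\sqrt k\}$, followed by a union bound over the $\binom{N}{k}$ subsets and the Chebyshev form of Theorem~\ref{imprPaouris}, is correct as an inequality but far too lossy, and the hypothesis on $t$ cannot repair it for any choice of $q(k)$. The inclusion $\bigcap_{i\in I}\{X(i)\geq t\}\subset\{|P_I X|\geq t\sqrt k\}$ replaces a conjunction of $k$ \emph{moderate} deviations by a single \emph{large}-deviation event for the Euclidean norm, which is governed by a completely different and much more likely mechanism: a single coordinate of size about $t\sqrt k$. Concretely, let $X$ have i.i.d.\ symmetric exponential coordinates in $\R^N$ with $N=p^2$ and take $t=\sqrt p$; then $\sigma_X(p)\asymp p$, hence $k_*:=\sigma_X(p)^2/t^2\asymp p$, and the hypothesis $t\geq C\log(Nt^2/\sigma_X^2(p))\asymp C\log p$ holds for $p$ large. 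Optimizing over $q$ in $\Pr(|P_I X|\geq t\sqrt k)\leq \bigl(C(\sqrt k+\sigma_X(q))/(t\sqrt k)\bigr)^q$ gives at best $e^{-ct\sqrt k}$, and this is the correct order for the exponential vector (one coordinate of size $t\sqrt k$ already produces it). So for $k\asymp p$ the union bound reads $\binom{N}{k}e^{-ct\sqrt k}\geq (N/k)^k e^{-ct\sqrt k}=e^{\Omega(p\log p)}\gg 1$: it is vacuous throughout the range $k\gtrsim k_*$ where the tail integral must be controlled, so $\Ex N_X(t)^p$ cannot be bounded this way. The actual $\Pr(N_X(t)\geq k)$ here is at most $(Nq_0)^k$ with $q_0=\Pr(X(1)\geq t)\asymp e^{-\sqrt{2p}}$, i.e.\ doubly exponentially small; the union bound cannot see this because it cannot distinguish ``all $k$ coordinates exceed $t$'' from ``one coordinate exceeds $t\sqrt k$''.

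What is missing is an estimate exploiting the conjunctive structure of $\bigcap_s\{X(i_s)\geq t\}$. The paper supplies exactly this in Proposition~\ref{conditional}: conditioning on a convex event $A=\{X\in K\}$, one has $\sum_i\Pr(A\cap\{X(i)\geq t\})\leq C\,\Pr(A)\bigl(t^{-2}\sigma_X^2(-\log\Pr(A))+Ne^{-t/C}\bigr)$, together with a counting bound on how many $i$ can satisfy $\Pr(A\cap\{X(i)\geq t\})\geq e^{-u}\Pr(A)$. The proof then expands $\Ex N_X(t)^{\ell}=\sum_{i_1,\dots,i_\ell}\Pr(B_{i_1,\dots,i_\ell})$ with repeated indices allowed, partitions the $\ell$-tuples according to the successive ratios $\Pr(B_{i_1,\dots,i_s})/\Pr(B_{i_1,\dots,i_{s-1}})$, and controls the count via Lemma~\ref{combf}. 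This conditioning-and-counting mechanism --- not a subset union bound with a single-projection tail estimate --- is the core of the argument, and your outline has no substitute for it.
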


We apply Theorem~\ref{estN} to obtain probability estimates on 
order statistics $X^*(i)$'s.

\begin{theorem}
\label{orderstat}
For any $N$-dimensional log-concave random isotropic vector $X$, 
any  $1\leq \ell\leq N$ and $t\geq C\log (eN/\ell)$,
\[
\Pr(X^*(\ell)\geq t)\leq
\exp\Big(-\sigma_X^{-1}\Big(\frac{1}{C}t\sqrt{\ell}\Big)\Big), 
\]
where $C$ is an absolute positive constant. 
\end{theorem}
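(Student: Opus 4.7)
The proof plan is to apply Markov's inequality in the $p$-th moment form to Theorem~\ref{estN}, after first relating the order-statistic event to the counting function $N_X$. Specifically, since $X^*(\ell)$ is the non-increasing rearrangement of $(|X(i)|)_i$, the event $\{X^*(\ell)\geq t\}$ coincides with $\{\#\{i:|X(i)|\geq t\}\geq \ell\}$, and decomposing into positive and negative parts gives
\[
\{X^*(\ell)\geq t\}\subset \{N_X(t)\geq \ell/2\}\cup\{N_{-X}(t)\geq \ell/2\}.
\]
Since $-X$ is also log-concave isotropic and $\sigma_{-X}(p)=\sigma_X(p)$, it suffices to bound $\Pr(N_X(t)\geq \ell/2)$; the same estimate will then apply to the $-X$ term and the factor $2$ can be absorbed into the constants.

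Next, for a parameter $p\geq 2$ to be chosen, I would apply Markov's inequality:
\[
\Pr(N_X(t)\geq \ell/2)=\Pr\bigl(t^2 N_X(t)\geq t^2\ell/2\bigr)\leq \frac{\Ex(t^2 N_X(t))^p}{(t^2\ell/2)^p}.
\]
Provided the hypothesis of Theorem~\ref{estN} holds, namely $t\geq C_0\log(Nt^2/\sigma_X^2(p))$, this is bounded by $\bigl(2C_1^2\sigma_X(p)^2/(t^2\ell)\bigr)^{p}$ for an absolute constant $C_1$. The natural choice is then $p=\sigma_X^{-1}(t\sqrt{\ell}/C_2)$ for a sufficiently large constant $C_2$, which forces $\sigma_X(p)\leq t\sqrt{\ell}/C_2$ and consequently makes the ratio at most $e^{-1}$, so that the whole probability is at most $e^{-p}=\exp(-\sigma_X^{-1}(t\sqrt{\ell}/C_2))$.

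The step requiring the most care is checking the admissibility condition of Theorem~\ref{estN} for this choice of $p$. With $\sigma_X(p)\leq t\sqrt{\ell}/C_2$, one has $Nt^2/\sigma_X^2(p)\geq C_2^2 N/\ell$, so the condition becomes $t\geq C_0\log(C_2^2 N/(c\ell))$ for some $c\leq 1$ coming from the Markov constant; this follows directly from the assumed lower bound $t\geq C\log(eN/\ell)$ upon choosing $C$ large enough relative to $C_0$ and $C_2$. One also has to ensure $p\geq 2$: this may be guaranteed by taking $C$ large so that $t\sqrt{\ell}\geq C\sigma_X(2)=C$, which forces $\sigma_X^{-1}(t\sqrt{\ell}/C_2)\geq 2$, while in the degenerate regime where $\sigma_X^{-1}(t\sqrt{\ell}/C_2)<2$ the conclusion is trivial after adjusting constants.

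The main obstacle is really bookkeeping of the several absolute constants (the one in Theorem~\ref{estN}, the Markov-step constant, and the scaling constant used in the choice of $p$) so that the final inequality reads with a single absolute constant $C$ in front of $t\sqrt{\ell}$ inside $\sigma_X^{-1}$. No new probabilistic ingredient is needed beyond Theorem~\ref{estN} and the union bound for $\pm X$; the argument is essentially a moment-to-tail conversion tailored to the level $\ell$ of the order statistic.
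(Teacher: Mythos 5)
Your proof follows the paper's argument essentially verbatim: the decomposition of $\{X^*(\ell)\geq t\}$ into $\{N_X(t)\geq\ell/2\}\cup\{N_{-X}(t)\geq\ell/2\}$, the Chebyshev step $\Pr(N_X(t)\geq\ell/2)\leq(2/\ell)^p\,\E N_X(t)^p$, Theorem~\ref{estN}, and the optimization $p=\sigma_X^{-1}(t\sqrt\ell/C)$ are all exactly what the paper does. One small point to tighten: in verifying the admissibility condition of Theorem~\ref{estN} you deduce from $\sigma_X(p)\leq t\sqrt\ell/C_2$ that $Nt^2/\sigma_X^2(p)\geq C_2^2N/\ell$ and then assert the condition becomes $t\geq C_0\log(C_2^2N/(c\ell))$; but a lower bound on $Nt^2/\sigma_X^2(p)$ makes $\log\bigl(Nt^2/\sigma_X^2(p)\bigr)$ \emph{larger}, so this step as written reverses the needed implication. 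The fix is immediate: since $\sigma_X$ is continuous and non-decreasing, the choice $p=\sigma_X^{-1}(t\sqrt\ell/C_2)$ actually gives the \emph{equality} $\sigma_X(p)=t\sqrt\ell/C_2$ (when $t\sqrt\ell/C_2$ lies in the range of $\sigma_X$, and otherwise $\sigma_X^{-1}$ is infinite and the asserted bound is vacuous since $X^*(\ell)\leq|X|/\sqrt\ell$), so $Nt^2/\sigma_X^2(p)=C_2^2N/\ell$ exactly and the admissibility condition reduces to $t\geq C_0\log(C_2^2N/\ell)$, which indeed follows from $t\geq C\log(eN/\ell)$ once $C$ is taken large enough. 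With that correction your proof is complete and coincides with the one in the paper.
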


\begin{proof}
Observe that 
$\sigma_{-X}(p)=\sigma_X(p)$  and that 
$X^*(\ell)\geq t$ implies that $N_X(t)\geq \ell/2$ or
$N_{-X}(t)\geq \ell/2$. So by Chebyshev's inequality and Theorem
\ref{estN},
\[
 \Pr(X^*(\ell)\geq t)\leq \Big(\frac{2}{\ell}\Big)^p(\Ex N_X(t)^p+\Ex
 N_{-X}(t)^p)\leq \Big(\frac{C'\sigma_X(p)}{t\sqrt{\ell}}\Big)^{2p}
\]
provided that $t\geq C''\log(Nt^2/\sigma_X^2(p))$, where $C', C''$ are  
absolute positive constants. To conclude the proof 
it is enough to take $p=\sigma_{X}^{-1}(\frac{1}{C' e}t\sqrt{\ell})$ and 
to notice that the restriction on $t$ follows by the condition 
$t\geq C\log(eN/\ell)$.
\end{proof}

We can now state one of the main results of this paper. 

\begin{thm}
\label{imprunifPaouris}
Let $X$ be an isotropic log-concave  random vector in $\er^N$  and  $m \le
N$.  For any $t\geq 1$,
\[
\Pr\Bigg(\sup_{{I \subset \{1,\ldots,N\}}\atop {|I| = m}}|P_IX|\geq
Ct\sqrt{m}\log\Big(\frac{eN}{m}\Big)\Bigg)
\leq
\exp\left(-\sigma_{X}^{-1}\left(\frac{t\sqrt{m}
\log\left(\frac{eN}{m}\r)}{\sqrt{\log(em/m_0)}}\r)\r),
\]
where $C$ is an absolute positive constant and  
\[
m_0=m_0(X,t)=\sup\Big\{k\leq m\colon\ k\log\Big(\frac{eN}{k}\Big)\leq
\sigma_{X}^{-1}\Big(t\sqrt{m}\log\Big(\frac{eN}{m}\Big)\Big)\Big\}.  
\]
\end{thm}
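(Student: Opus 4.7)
The starting point is the identity
\begin{equation*}
\sup_{|I|=m}|P_IX|^2=\sum_{i=1}^m X^*(i)^2,
\end{equation*}
which reduces the event to a tail estimate for the sum of the top $m$ order statistics of $(|X(i)|)_{i\le N}$. A dyadic decomposition of $\{1,\ldots,m\}$ then gives
\begin{equation*}
\sum_{i=1}^m X^*(i)^2\le \sum_{j=0}^{J} 2^{j}\bigl(X^*(2^j)\bigr)^2,\qquad J=\lfloor\log_2 m\rfloor,
\end{equation*}
so it suffices to control each $X^*(2^j)$ separately.

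Rewriting Theorem \ref{orderstat} in inverted form yields the per-level tail: for every $\ell\in\{1,\ldots,N\}$ and every $p\ge 2$, with probability at least $1-\exp(-p)$,
\begin{equation*}
X^*(\ell)\le C\max\!\bigl(\log(eN/\ell),\,\sigma_X(p)/\sqrt\ell\bigr).
\end{equation*}
Setting $s=t\sqrt m\log(eN/m)$ and $p=\sigma_X^{-1}\!\bigl(s/\sqrt{\log(em/m_0)}\bigr)$ (the exponent appearing in the conclusion), I would apply this per-level tail at every dyadic index $\ell=2^j$ with this common $p$ and take a union bound over the $J+1=O(\log m)$ levels; the resulting $\log m$ factor in the failure probability is absorbed by a small constant adjustment of $p$, using the regularity $\sigma_X(cp)\le 2c\sigma_X(p)$ recorded in Remark 1 after Theorem \ref{imprPaouris}.

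The heart of the argument is to split the dyadic sum at $m_0$. For $2^j>m_0$, the defining inequality of $m_0$ combined with the monotonicity of $\sigma_X^{-1}$ forces $\sigma_X(p)/\sqrt{2^j}\ge \log(eN/2^j)$, so the $\sigma_X$-term dominates and
\begin{equation*}
2^j X^*(2^j)^2\le C^2\sigma_X(p)^2=\frac{C^2 s^2}{\log(em/m_0)}.
\end{equation*}
Since there are at most $\lceil\log_2(em/m_0)\rceil$ such $j$, their aggregate contribution is $\le C'' s^2$, matching the required cut-off level for $\sup_I|P_IX|$. For $2^j\le m_0$ the $\log$-term dominates, and geometric decay of $2^j\log^2(eN/2^j)$ gives total $O(m_0\log^2(eN/m_0))$; combining the defining inequality $m_0\log(eN/m_0)\le\sigma_X^{-1}(s)$ with the linear bound $\sigma_X(p)\le Cp$ valid for isotropic log-concave $X$ (hence $\sigma_X^{-1}(s)\ge s/C$), together with a routine comparison of $\log(eN/m_0)$ with $t\sqrt m\log(eN/m)$, delivers the desired bound $m_0\log^2(eN/m_0)\lesssim s^2$.

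The main obstacle is the calibration of $p$: the factor $\sqrt{\log(em/m_0)}$ in the probability bound is precisely the one that makes the $\log(em/m_0)$ dyadic levels above $m_0$ contribute $\sigma_X(p)^2\cdot\log(em/m_0)=s^2$, matching the cut-off level; forcing this balance is what dictates the unusual form of the exponent and the definition of $m_0$. Secondary technical points are the absorption of the $\log m$ union-bound factor and the comparison of logarithmic factors needed to treat the $2^j\le m_0$ range; both are routine but rely on the sub-linear regularity of $\sigma_X$ for log-concave isotropic vectors.
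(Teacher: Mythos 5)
Your overall shape (identity $\sup_{|I|=m}|P_IX|^2=\sum_{i\le m}X^*(i)^2$, dyadic decomposition, Theorem~\ref{orderstat} as the per-level input) is close in spirit to the paper, but the way you handle the bottom scales has a genuine gap, and it is precisely the part of the argument that the exponent $\sigma_X^{-1}(s/\sqrt{\log(em/m_0)})$ was engineered around.

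The problem is the claim that the two terms in the per-level bound $X^*(\ell)\lesssim\max\bigl(\log(eN/\ell),\,\sigma_X(p)/\sqrt{\ell}\bigr)$ cross exactly at $\ell=m_0$. The defining inequality $m_0\log(eN/m_0)\le\sigma_X^{-1}(s)$ controls the size of $\binom{N}{m_0}$ against $\exp(\sigma_X^{-1}(s))$; it does \emph{not} say anything about the crossing of $\sqrt\ell\log(eN/\ell)$ with $\sigma_X(p)$, which depends on $\sigma_X$ itself (not only on $\sigma_X^{-1}(s)$). For instance, for a standard Gaussian vector $m_0=m$ (one checks $k\log(eN/k)\le t^2m\log^2(eN/m)$ for every $k\le m$ when $t\ge1$, $m\le N$), so your split declares \emph{all} dyadic levels to be in the ``$\log$-term regime,'' yet at $\ell=1$ one has $\sigma_X(p)/\sqrt\ell\approx\sqrt p$, which beats $\log(eN)$ as soon as $p\gtrsim\log^2 N$. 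In that case discarding the $\sigma_X$-term below $m_0$ is not legitimate; keeping both terms at every level costs you $(\log_2 m)\cdot\sigma_X(p)^2\approx (\log m/\log(em/m_0))\cdot s^2$, which overshoots the target cut-off by a factor $\log m/\log(em/m_0)$ that is unbounded when $m_0$ is close to $m$. The paper avoids this entirely by \emph{not} using order statistics for the bottom $m_0$ coordinates: it bounds $\sup_{|J|=m_0}|P_JX|$ in one shot via Paouris's moment inequality (Theorem~\ref{imprPaouris}) plus a union bound over $\binom{N}{m_0}$ choices of $J$ — and this is exactly where the defining inequality for $m_0$ is used, to absorb $\binom{N}{m_0}\le\exp(m_0\log(eN/m_0))\le\exp(\sigma_X^{-1}(s))$ into the Paouris probability. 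The dyadic order-statistics argument is then run only over the $\lceil\log_2(m/m_0)\rceil$ scales from $m_0$ up to $m$, so the number of levels matches the $\sqrt{\log(em/m_0)}$ dilution of the exponent and no extra $\log m$ factor appears.

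A smaller point: the claim $m_0\log^2(eN/m_0)\lesssim s^2$ in your ``$2^j\le m_0$'' branch is actually correct, but for a simpler reason than you give — $z\mapsto z\log^2(eN/z)$ is nondecreasing on $(0,N/e]$ and $m_0\le m$, so $m_0\log^2(eN/m_0)\lesssim m\log^2(eN/m)=s^2/t^2$; neither $\sigma_X^{-1}(s)\ge s/C$ nor the defining inequality for $m_0$ is needed there. To repair the proof you would need to replace the per-level bound for $\ell\le m_0$ with the Paouris-plus-union-bound argument of the paper; the dyadic part of your argument then works essentially as the paper does, provided the union bound over levels runs only over $j$ with $2^j\ge m_0$.
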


\noindent
{\bf Remark.} We believe that the probability estimate should not contain
any logarithmic term in the denominator, but it seems that our methods fail
to show it. However it is not crucial in the  sequel.

\medskip

Since $\sigma_X(p)\leq p$, Theorem \ref{imprunifPaouris-intro} is an
immediate consequence of Theorem~\ref{imprunifPaouris}.

\section{Tail estimates for projections of sums of log-con\-ca\-ve
  random vectors} 
\label{section_convolutions}

We shall now study consequences that the results of 
Section~\ref{Section_New_Est} have for tail estimates 
for Euclidean norms of projections of sums of log-concave 
random vectors. Namely, we investigate the behavior of a random 
vector $Y=Y_x=\sum_{i=1}^n x_i X_i$,  where $X_1,\ldots, X_n$ are 
independent isotropic log-concave random vectors in $\R^N$
and  $x = (x_i)_1^n \in \er^n$ is  a fixed vector. We provide  
uniform bounds on projections of such a vector. 
We start with the following proposition.

\begin{prop}
\label{G-K}
Let $X_1,\ldots,X_n$ be independent isotropic log-concave random vectors in 
$\R^N$, $x = (x_i)_1^n \in \er^n$, and $Y=\sum_{i=1}^n x_i X_i$. Then
for every $p\ge 1$ one has
\[
  \sigma _Y(p) = \sup_{t\in S^{N-1}}(\Ex|\langle t,Y\rangle|^p)^{1/p}
     \leq C(\sqrt{p}|x|+p\|x\|_{\infty}),  
\]
where $C$ is an absolute positive constant. 
\end{prop}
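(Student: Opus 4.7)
My plan is to reduce the proposition to a Bernstein-type moment estimate for weighted sums of independent centered $\psi_1$ random variables. I will first fix $t\in S^{N-1}$ and introduce the one-dimensional random variables $Z_i:=\langle t,X_i\rangle$. Because each $X_i$ is isotropic and log-concave, the $Z_i$'s are independent, centered, real log-concave random variables with $\Ex Z_i^2=|t|^2=1$; the observations in Section~\ref{notprel} (the sub-exponential bound (\ref{decay}) combined with the moment characterization of $\|\cdot\|_{\psi_1}$ recalled after Definition~\ref{def:psi1_norm}) then yield $\|Z_i\|_{\psi_1}\leq c_0$ for a universal constant $c_0$, uniformly in $i$ and in $t$.

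Next I will control the $p$-th moment of $W:=\sum_{i=1}^n x_iZ_i=\langle t,Y\rangle$ by the standard moment-generating-function argument for centered $\psi_1$ random variables. Since $Z_i$ is centered with $\|Z_i\|_{\psi_1}\leq c_0$, a Taylor expansion using $\Ex|Z_i|^k\leq k!\,c_0^k$ gives $\log\Ex\exp(\lambda Z_i)\leq C\lambda^2$ for $|\lambda|\leq c$ with absolute constants $c,C$. Independence then yields
\[
\log\Ex\exp(\lambda W)\leq C\lambda^2|x|^2\qquad\text{whenever}\ |\lambda|\leq c/\|x\|_\infty,
\]
and a standard Chebyshev-via-Laplace optimization converts this into the Bernstein tail
\[
\Pr(|W|\geq s)\leq 2\exp\Bigl(-c'\min\bigl(s^2/|x|^2,\ s/\|x\|_\infty\bigr)\Bigr).
\]
Integrating this against $p\,s^{p-1}\,ds$ (the Gaussian regime contributes $\sim(\sqrt p\,|x|)^p$, the sub-exponential regime $\sim(p\|x\|_\infty)^p$) gives $(\Ex|W|^p)^{1/p}\leq C(\sqrt p\,|x|+p\|x\|_\infty)$ uniformly in $t\in S^{N-1}$; taking the supremum over $t$ gives the proposition.

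The argument has no serious obstacle; the only subtlety is the bookkeeping of universal constants when passing from the one-variable $\psi_1$ bound to the MGF bound that behaves multiplicatively under independence. A completely equivalent route, which is presumably what the label G-K alludes to, is to quote the Gluskin--Kwapie\'n moment inequality for weighted sums of i.i.d.\ symmetric exponential random variables---whose $L^p$-norm is equivalent to $\sqrt p\,|x|+p\|x\|_\infty$ up to universal constants---and to invoke the standard comparison between a centered log-concave variable of unit variance and the symmetric exponential distribution to transfer the bound to our $W$.
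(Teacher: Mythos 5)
Your primary argument is correct but takes a genuinely different route from the paper. You derive the bound from a Bernstein-type moment-generating-function estimate: each $Z_i=\langle t,X_i\rangle$ is centered, unit-variance, log-concave, hence $\psi_1$ with a universal constant; you bound $\log\Ex\exp(\lambda x_i Z_i)$ by $C\lambda^2 x_i^2$ in the range $|\lambda|\leq c/\|x\|_\infty$, sum over $i$ by independence, optimize the Chernoff bound to get the two-regime tail $\Pr(|W|\geq s)\leq 2\exp(-c'\min(s^2/|x|^2,\,s/\|x\|_\infty))$, and integrate $p\,s^{p-1}$ to recover the moment bound. Every step here is sound (the one-variable MGF bound follows from $\Ex|Z_i|^k\leq 2k!\,c_0^k$, and the tail-to-moment integration splits cleanly into a Gaussian piece of order $(\sqrt p\,|x|)^p$ and an exponential piece of order $(p\|x\|_\infty)^p$). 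The paper instead proceeds by a distributional comparison: it symmetrizes with Rademacher signs, invokes the contraction/comparison principle (Lemma~4.6 in Ledoux--Talagrand) to dominate $\sum x_i\varepsilon_i Z_i$ by $\sum x_i E_i$ with $E_i$ symmetric exponential, and then quotes the Gluskin--Kwapie\'n two-sided estimate for $\|\sum x_i E_i\|_p$ --- this is exactly your \emph{alternative} route, which you correctly identify as the source of the label ``G--K''. Your MGF argument is more self-contained and avoids symmetrization (it works directly with centered, not necessarily symmetric, $Z_i$), at the cost of redoing the Laplace-transform bookkeeping rather than quoting a ready-made comparison; the paper's route is shorter given the cited references and also yields the sharp two-sided form $\sqrt p\,|x|+p\|x\|_\infty$ as an equivalence for the exponential model, which is not needed here but is cleaner for reference.
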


\smallskip

\begin{proof}
For every $t\in S^{N-1}$ we have
\[
 \langle t, Y\rangle =\sum_{i=1}^n x_i\langle t, X_i\rangle.
\]
Let $E_i$ be independent symmetric exponential random variables with
variance $1$.
Let $t\in S^{N-1}$ and $x = (x_i)_1^n \in \er^n$. The variables 
$Z_i=\langle t, X_i\rangle$ are one dimensional centered 
log-concave with variance $1$, therefore by (\ref{decay}) for every 
$s>0$ one has 
$$
 \p\left(|Z_i|\geq s \r)  
 \leq C_0\ \p\left(|E_i|\geq s/ C_0 \r) .
$$
Let $(\varepsilon_i)$ be independent Bernoulli $\pm1$ random variables,
independent also from $(Z_i)$. 
A classical symmetrization argument and Lemma~4.6 of \cite{LT} imply that 
there exists $C$ such that
$$
  (\Ex|\langle t,Y\rangle|^p)^{1/p} \leq 2\Big(\Ex\Big|\sum_{i=1}^n x_i 
  \varepsilon_i Z_i\Big|^p\Big)^{1/p} \leq C\Big(\Ex\Big|\sum_{i=1}^n 
  x_i E_i\Big|^p\Big)^{1/p} . 
$$
The well-known estimate (which follows e.g. from Theorem~1 in \cite{GK})
$$
  \left(\E \Big|\sum_{i=1}^n x_i E_i\Big|^p\r)^{1/p} \leq 
  C(\sqrt{p}|x|+p\|x\|_{\infty}) 
$$
concludes the proof.  
\end{proof}

\bigskip

\begin{cor}
\label{orderstatconv}
Let $X_1, \ldots, X_n$, $x$  and $Y$ be as in Proposition~\ref{G-K} and 
$1\le \ell\le N $. 
 Then for any $t\geq C|x|\log\left(\frac{eN}{\ell}\r)$ one has 
\[
\Pr(Y^*(\ell)\geq t)\leq
\exp\left(-\frac{1}{C}\min\left\{\frac{t^2\ell}{|x|^2},\frac{t\sqrt{\ell}}  
{\|x\|_{\infty}} \r\}\r), 
\]
where $C$  is an absolute positive constant. 
\end{cor}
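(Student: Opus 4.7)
The plan is to reduce the corollary to Theorem~\ref{orderstat} by normalizing $Y$ and then using Proposition~\ref{G-K} to control the relevant weak moment parameter.

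First, since $X_1,\ldots,X_n$ are independent and isotropic, one computes $\Ex Y\otimes Y = \sum_{i=1}^n x_i^2 \Ex X_i\otimes X_i = |x|^2\, \mathrm{Id}$, so the vector $\widetilde Y := Y/|x|$ is log-concave (as an affine image of a sum of independent log-concave vectors) and isotropic. Note also that $\widetilde Y^*(\ell) = Y^*(\ell)/|x|$, so
\[
\Pr\bigl(Y^*(\ell) \geq t\bigr) = \Pr\bigl(\widetilde Y^*(\ell) \geq t/|x|\bigr).
\]
The hypothesis $t \geq C|x|\log(eN/\ell)$ is exactly what is needed to apply Theorem~\ref{orderstat} to $\widetilde Y$ at level $t/|x|$, yielding
\[
\Pr\bigl(\widetilde Y^*(\ell)\geq t/|x|\bigr) \leq \exp\Bigl(-\sigma_{\widetilde Y}^{-1}\bigl(\tfrac{1}{C}(t/|x|)\sqrt{\ell}\bigr)\Bigr).
\]

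Next I would translate back to $Y$. Since $\sigma_{\widetilde Y}(p) = \sigma_Y(p)/|x|$, the inverses satisfy $\sigma_{\widetilde Y}^{-1}(u) = \sigma_Y^{-1}(|x|u)$, so the exponent above equals $\sigma_Y^{-1}(t\sqrt{\ell}/C)$. It remains to produce a lower bound on $\sigma_Y^{-1}$ from Proposition~\ref{G-K}, which asserts
\[
\sigma_Y(p) \leq C_0\bigl(\sqrt{p}\,|x| + p\|x\|_\infty\bigr).
\]
Inverting this elementary relation: to guarantee $\sigma_Y(p)\leq s$ it suffices to require both $\sqrt{p}\,|x| \leq s/(2C_0)$ and $p\|x\|_\infty \leq s/(2C_0)$, which gives
\[
\sigma_Y^{-1}(s) \;\geq\; c\,\min\!\Bigl\{\frac{s^2}{|x|^2},\,\frac{s}{\|x\|_\infty}\Bigr\}.
\]

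Finally, plugging $s = t\sqrt{\ell}/C$ into the above and absorbing constants into a single absolute constant $C$, one obtains
\[
\sigma_Y^{-1}\bigl(t\sqrt{\ell}/C\bigr) \;\geq\; \frac{1}{C}\min\!\Bigl\{\frac{t^2\ell}{|x|^2},\,\frac{t\sqrt{\ell}}{\|x\|_\infty}\Bigr\},
\]
which combined with the previous display gives the claimed probability bound. There is no real obstacle here — the argument is a routine normalization followed by inversion of the two-term bound from Proposition~\ref{G-K}; the only mild care needed is in verifying that the hypothesis on $t$ lines up with the hypothesis in Theorem~\ref{orderstat} after dividing by $|x|$, which it does directly.
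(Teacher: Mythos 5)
Your argument is correct and follows essentially the same route as the paper: normalize to $Z=Y/|x|$, apply Theorem~\ref{orderstat} to $Z$, and invert the two-term bound on $\sigma_Y$ coming from Proposition~\ref{G-K}. The only cosmetic difference is that you carry $\sigma_Y^{-1}$ around and translate via $\sigma_{\widetilde Y}^{-1}(u)=\sigma_Y^{-1}(|x|u)$, whereas the paper works directly with $\sigma_Z$; these are equivalent.
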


\begin{proof}
The vector $Z=Y/|x|$ is isotropic and log-concave. Moreover by 
Proposition~\ref{G-K} we have
\[
\sigma_{Z}(p)=\frac{1}{|x|}\sigma_Y(p)\leq
C_1\Big(\sqrt{p}+p\frac{\|x\|_{\infty}}{|x|}\Big).
\]
Therefore for every $t\geq C_1$ 
\[
\sigma_Z^{-1}(t)\geq
\frac{1}{C_2}\min\Big\{t^2,\frac{|x|}{\|x\|_{\infty}}t\Big\}
\]
and by Theorem~\ref{orderstat} we get for every 
$t\geq C_3 |x|\log\left(\frac{eN}{\ell}\r)$ 
\begin{align*}
\Pr(Y^*(\ell)\geq t)&=\Pr\Big(Z^*(\ell)\geq \frac{t}{|x|}\Big)\leq
\exp\Big(-\sigma_{Z}^{-1}\Big(\frac{t\sqrt{\ell}}{C_4 |x|}\Big)\Big)\\
&\leq
\exp\Big(-\frac{1}{C}\min\Big\{\frac{t^2\ell}{|x|^2},\frac{t\sqrt{\ell}}
 {\|x\|_{\infty}}\Big\}\Big).
\end{align*}
\end{proof}

The next theorem provides uniform estimates for the Euclidean norm of 
projections of sums $Y_x$, considered above, in terms of the Euclidean and 
$\ell_{\infty}$ norms of the vector $x\in \R^n$.

\begin{thm}
\label{singlex}
Let $X_1,\ldots,X_n$ be independent isotropic log-concave random vectors 
in $\R^N$, $x = (x_i)_1^n \in \er^n$, and  $Y=\sum_{i=1}^n x_i X_i$. 
Assume that $|x|\leq 1$, $\|x\|_{\infty}\leq b\leq 1$ and let $1\le m \le N$. 
\\ \\
i) If $b\geq \frac{1}{\sqrt{m}}$ then for any $t\geq 1$ 
\[
\Pr\Bigg(\sup_{{I \subset \{1,\ldots,N\}}\atop {|I| = m}}|P_IY|\geq
Ct\sqrt{m}\log\Big(\frac{eN}{m}\Big)\Bigg)
\leq
\exp\Bigg(-\frac{t\sqrt{m}\log\Big(\frac{eN}{m}\Big)}
{b\sqrt{\log(e^2b^2m)}}\Bigg) ;
\]
ii) if $b\leq \frac{1}{\sqrt{m}}$ then for any $t\geq 1$
\begin{align*}
\Pr\Bigg(\sup_{{I \subset \{1,\ldots,N\}}\atop {|I| = m}}|P_IY|\geq
Ct&\sqrt{m}\log\Big(\frac{eN}{m}\Big)\Bigg)
\\
&\leq
\exp\Big(-\min\Big\{t^2m\log^2\Big(\frac{eN}{m}\Big),
   \frac{t}{b}\sqrt{m}\log\Big(\frac{eN}{m}\Big)\Big\}\Big) , 
\end{align*}
where $C$ is an absolute positive constant. 
\end{thm}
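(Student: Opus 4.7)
The plan is to reduce to Theorem \ref{imprunifPaouris} by normalizing $Y$ to be isotropic, and then to invert the moment bound of Proposition \ref{G-K} to control the parameter $\sigma^{-1}$ appearing there. Set $Z := Y/|x|$; since $\E\, Y\otimes Y = |x|^2\id$ and $Y$ is log-concave as a sum of independent log-concave vectors, $Z$ is isotropic log-concave. Writing $\sup_{|I|=m}|P_I Y| = |x|\sup_{|I|=m}|P_I Z|$, the event in question becomes $\{\sup_{|I|=m}|P_I Z| \ge Ct'\sqrt{m}\log(eN/m)\}$ with $t' := t/|x| \ge 1$, so Theorem \ref{imprunifPaouris} bounds its probability by
\[
\exp\left(-\sigma_Z^{-1}\left(\frac{t'\sqrt{m}\log(eN/m)}{\sqrt{\log(em/m_0)}}\right)\right),
\]
where $m_0 = m_0(Z, t')$ is defined as in that theorem.

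Next, I would invert Proposition \ref{G-K}. Since $\sigma_Z(p) = \sigma_Y(p)/|x| \le C(\sqrt{p} + pb/|x|)$, a direct computation gives $\sigma_Z^{-1}(s) \ge \tfrac{1}{C_1}\min(s^2,\, |x|s/b)$. Plugging in the argument above and using $|x|\le 1$ yields
\[
\sigma_Z^{-1}\!\left(\frac{t'\sqrt{m}\log(eN/m)}{\sqrt{\log(em/m_0)}}\right) \ge \frac{1}{C_2}\min\!\left(\frac{t^2 m \log^2(eN/m)}{\log(em/m_0)},\, \frac{t\sqrt{m}\log(eN/m)}{b\sqrt{\log(em/m_0)}}\right).
\]
The crucial point is that the factor $1/|x|$ arising from the normalization cancels in the linear branch, so that the final bound depends on $b$ but not on $|x|$ directly.

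It remains to control $\log(em/m_0)$ in the two regimes. In case (i), $b \ge 1/\sqrt{m}$, I expect $m_0 \gtrsim 1/b^2$ (and note $1/b^2 \le m$ here), giving $\log(em/m_0) \lesssim \log(e^2 b^2 m)$; this should be verified by plugging $k \sim 1/b^2$ into the defining inequality $k\log(eN/k) \le \sigma_Z^{-1}(t'\sqrt{m}\log(eN/m))$, using the linear branch of the min on the right and $b\sqrt{m} \ge 1$ on the left. In case (ii), $b \le 1/\sqrt{m}$, I expect $m_0 = m$ up to absorbed constants; this is verified by checking that $k = m$ satisfies the defining inequality, since $b\sqrt{m} \le 1$ and $t \ge 1$ make both branches of the min sufficiently large. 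Once $\log(em/m_0)$ is under control, the claimed tail bounds are read off directly from the $\min$ in each case.

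The main obstacle is the careful analysis of the implicit threshold $m_0$: producing the sharp factor $\sqrt{\log(e^2 b^2 m)}$ in case (i) requires both a well-chosen test value $k$ and an honest use of the two branches of $\sigma_Z^{-1}$. A secondary technicality is bookkeeping of absolute constants so that the two regimes glue correctly at the transition $b = 1/\sqrt{m}$, and so that constants introduced in the moment inversion can be absorbed into the universal constant $C$ appearing in the threshold $Ct\sqrt{m}\log(eN/m)$ without distorting the exponent.
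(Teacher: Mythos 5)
Your proposal follows the same overall strategy as the paper: reduce to an isotropic vector, apply Theorem~\ref{imprunifPaouris}, invert $\sigma$ via Proposition~\ref{G-K}, and estimate the implicit threshold $m_0$. The one genuine difference is the isotropization step. The paper pads $Y$ with an independent Gaussian component (choosing a vector $y$ with $\|y\|_\infty\le b$ and $|x|^2+|y|^2=1$), so that after the reduction one has $|x|=1$ \emph{and} $\|x\|_\infty\le b$ simultaneously; this makes the inversion $\sigma_Y^{-1}(s)\ge \min\{s^2,s/b\}/C$ completely clean and free of any $|x|$-dependence. You instead rescale to $Z=Y/|x|$, at the cost that the $\ell_\infty$-ratio $\|x\|_\infty/|x|\le b/|x|$ is no longer bounded by $b$; you recover the $b$-dependent bound via the observation that the $1/|x|$ coming from $t'=t/|x|$ exactly cancels the $|x|$ in the linear branch $|x|s/b$ of $\sigma_Z^{-1}$, while the quadratic branch only improves because $|x|\le 1$ and the map $\min$ is monotone in each argument. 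This cancellation is correct and is the small extra idea your route requires; the paper's padding avoids needing it. Your treatment of $m_0$ in the two regimes (using $k\sim 1/b^2$ as a test value against the linear branch in case (i), and $k=m$ in case (ii)) mirrors the paper's use of $m_1(b)$ from (\ref{def_m0}) and Lemma~\ref{est_m0}, and the constant bookkeeping you flag at the end is precisely what the paper handles by applying Theorem~\ref{imprunifPaouris} with $Ct$ in place of $t$ and using $\sigma_X^{-1}(2ts)\ge t\sigma_X^{-1}(s)$. One detail worth making explicit when you write this up: you should confirm in case (i) that the $\min$ in your displayed lower bound for $\sigma_Z^{-1}$ is indeed realized by the linear term once the $\sqrt{\log(em/m_0)}$ factor is in the denominator, or else observe (as is true) that the quadratic branch still dominates the claimed exponent after absorbing constants; the paper sidesteps this by first establishing the linear lower bound (\ref{six}) at argument $t'\sqrt{m}\log(eN/m)$ and only afterwards dividing $t$ by $\sqrt{2\log(e^2b^2m)}$.
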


\medskip

\noindent
{\bf Remark.}
  Basically the same proof as the one given below shows that in i) the term 
 $\sqrt{\log(e^2b^2m)}$ may be replaced by $\sqrt{\log(e^2b^2m/t^2)}$ 
 and the condition $b\geq \frac{1}{\sqrt{m}}$
  by $b\geq \frac{t}{\sqrt{m}}$. We omit the details.

\bigskip

The proof of Theorem \ref{singlex} is based on Theorem
\ref{imprunifPaouris}. Let us first note that we may assume that 
vector $Y$ is isotropic, i.e. $|x|=1$. Indeed, we may find vector 
$y=(y_1,\ldots,y_{\ell})$ such that $\|y\|_{\infty}\leq b$ and 
$|x|^2+|y|^2=1$ and take $Y'=\sum_{i=1}^{\ell} y_iG_i$, 
where $G_i$ are i.i.d. canonical $N$-dimensional Gaussian vectors, 
independent of vectors $X_i$'s. Then the vector $Y+Y'$ is isotropic, 
satisfies assumptions of the theorem and for any $u>0$,
\[
  \Pr\bigg(\sup_{{I \subset \{1,\ldots,N\}}\atop {|I| = m}}|P_IY|\geq 
  u\bigg)\leq 2\Pr\bigg(\sup_{{I \subset \{1,\ldots,N\}}\atop {|I| = 
  m}}|P_I(Y+Y')|\geq u\bigg).
\]

Similarly as in the proof of Corollary \ref{orderstatconv}, 
for $t\geq C$ we have
\begin{equation}\label{inverse}
\sigma_Y^{-1}(t)\geq
\frac{1}{C}\min\left\{t^2,\frac{t}{\|x\|_{\infty}}\r\}.
\end{equation}
This allows us to estimate the quantity $m_0$ in Theorem \ref{imprunifPaouris}.
For $1/\sqrt{m}\leq b\leq 1$ define $m_1=m_1(b)>0$ by the equation
\begin{equation}
\label{def_m0}
m_1(b)\log\left(\frac{eN}{m_1(b)}\r)=\frac{\sqrt{m}}
    {b}\log\Big(\frac{eN}{m}\Big).
\end{equation}

One may show that $m_1(b)\sim
\frac{\sqrt{m}}{b}\log(\frac{eN}{m})/\log(\frac{eNb}{\sqrt{m}})$,
we will however need only the following simple estimate.

\begin{lemma}
\label{est_m0}
If  $1/\sqrt{m}\leq b\leq 1$ then $\log(m/m_1(b))\leq 2\log(eb\sqrt{m})$.
\end{lemma}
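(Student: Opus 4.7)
The plan is to reduce the implicit definition of $m_1$ to a one-variable identity for $r:=\log(m/m_1)$ and then finish by elementary analysis. Set $L:=\log(eN/m)$ and $s:=\log(b\sqrt{m})$. Under the hypothesis $b\ge 1/\sqrt{m}$ we have $s\ge 0$ and $L\ge 1$; and since the function $f(x)=x\log(eN/x)$ is increasing on $(0,N]$ with $f(m_1)=(\sqrt{m}/b)\log(eN/m)\le m\log(eN/m)=f(m)$, we get $m_1\le m$, so $r\ge 0$.

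Rewriting $\log(eN/m_1)=L+r$, the defining equation $m_1(L+r)=(\sqrt{m}/b)L$ becomes $m\,e^{-r}(L+r)=(\sqrt{m}/b)L$. Dividing by $mL$ and using $b\sqrt{m}=e^s$, this collapses to the clean identity
$$r = s + \log\!\Big(1+\frac{r}{L}\Big).$$
The goal then reduces to showing that any $r\ge 0$ satisfying this equality must obey $r\le 2+2s=2\log(eb\sqrt{m})$.

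I would split into two cases. If $r\le L$, then $\log(1+r/L)\le\log 2<1$, so $r\le s+1\le 2+2s$, done. If $r>L$, then $1+r/L\le 2r/L$, and using $L\ge 1$ one obtains $r-\log r\le s+\log 2$. Since $g(r):=r-\log r$ is strictly increasing on $[1,\infty)$, the claim $r\le 2+2s$ follows from verifying $g(2+2s)\ge s+\log 2$; this reduces to $2+s-2\log 2\ge\log(1+s)$, which holds at $s=0$ (as $2-2\log 2>0$) and has nonnegative derivative $1-1/(1+s)$ in $s$, so the inequality persists for all $s\ge 0$.

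The whole argument is quite routine; the only step requiring a moment of thought is recognising that converting the implicit equation into a clean upper bound is handled most cleanly by splitting at the threshold $r=L$, which naturally reflects the two regimes in which $\log(1+r/L)$ behaves differently. No deep tools from the body of the paper are needed.
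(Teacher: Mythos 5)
Your proof is correct, but it follows a genuinely different route from the paper's. The paper's argument is a one-liner in spirit: it directly evaluates $f(z)=z\log(eN/z)$ at the explicit test point $z_0=(eb)^{-2}$ and shows $f(z_0)<f(m_1(b))$ by two elementary estimates; since $f$ is increasing on $(0,N]$, this immediately gives $m_1(b)\ge (eb)^{-2}$, which is exactly the stated bound $\log(m/m_1(b))\le 2\log(eb\sqrt m)$. You instead reparametrize via $r=\log(m/m_1)$, $s=\log(b\sqrt m)$, $L=\log(eN/m)$, derive the fixed-point identity $r=s+\log(1+r/L)$, and finish by a two-case analysis around the threshold $r=L$, with a monotonicity argument for $g(r)=r-\log r$ in the tail case. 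Both arguments are sound; the paper's is shorter because it ``guesses'' the right test point, while yours is more systematic and does not require that guess, at the cost of a longer analysis. One small point worth making explicit in your write-up: in the second case you invoke strict monotonicity of $g$ on $[1,\infty)$ and apply it to both $r$ and $2+2s$; you should note that $r>L\ge 1$ and $2+2s\ge 2$ so both lie in the region where $g'\ge 0$, and that the contrapositive ($r>2+2s\Rightarrow g(r)>g(2+2s)\ge s+\log 2$) is what yields the contradiction. With that clarification the argument is complete.
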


\begin{proof}
Let $f(z)=z\log(eN/z)$. Using  $1/\sqrt{m}\leq b$ we observe 
\begin{align*}
 f\Big(\frac{1}{e^2b^2}\Big)&=\frac{1}{e^2b^2}\Big(\log\Big(\frac{eN}{m}\Big)+
 \log(me^2b^2)\Big)\leq
\frac{\sqrt{m}}{e^2b}\log\Big(\frac{eN}{m}\Big)+\frac{1}{e^2b^2}\sqrt{m}eb
\\
&\leq \frac{\sqrt{m}}{b}\log\Big(\frac{eN}{m}\Big)\Big(\frac{1}{e^2}
          +\frac{1}{e}\Big)< 
\frac{\sqrt{m}}{b}\log\Big(\frac{eN}{m}\Big)=f(m_1(b)).
\end{align*}
Since $f$ increases on $(0,N]$, we obtain 
$m_1(b)\geq (eb)^{-2}$, which implies the result. 
\end{proof}

\begin{proof}[Proof of Theorem \ref{singlex}] 
As we noticed after remark following Theorem~\ref{singlex}, 
without loss of generality we may assume that $|x|=1$, 
i.e. that $Y$ is isotropic.

i)  Assume $b\geq 1/\sqrt{m}$.
 By (\ref{inverse}) for every $t\geq C/(\sqrt{m}\log (eN/m))$ we have 
\begin{equation}\label{six}
\sigma_Y^{-1}\Big(t\sqrt{m}\log\Big(\frac{eN}{m}\Big)\Big) \ge 
\frac{t}{Cb}\sqrt{m}\log\Big(\frac{eN}{m}\Big) . 
\end{equation} 
By (\ref{def_m0}) it follows that for every  $t\geq |x|=1$  
\begin{displaymath}
m_1(b)\log\Big(\frac{eN}{m_1(b)}\Big) \le
\sigma_Y^{-1}\Big(Ct\sqrt{m}\log\Big(\frac{eN}{m}\Big)\Big).
\end{displaymath}
By the definition of $m_0$, given in Theorem~\ref{imprunifPaouris}, 
 this implies that $m_0(Y,Ct) \ge
\lfloor m_1(b)\rfloor$, and since $m_0(Y,Ct) \ge 1$ we get
$m_0(Y,Ct) \ge m_1(b)/2$. By Lemma~\ref{est_m0} this yields
$\log(em/m_0(Y,Ct)) \le 2 + 2\log(eb\sqrt{m}) \le
4\log(eb\sqrt{m})$.

Writing $t=t' \sqrt{2 \log (e^2 b^2 m)}$ and applying (\ref{six}) 
we obtain 
$$
 \sigma_Y^{-1}\left(\frac{t\sqrt{m}\log\left(\frac{eN}{m}\r)}{\sqrt{
 \log\frac{e m}{m_0(Y,Ct)}}}\r) \ge \sigma_Y^{-1}\left(t'\sqrt{m}
 \log\left(\frac{eN}{m}\r) \r) \geq 
 \frac{t'}{Cb}\sqrt{m}\log\Big(\frac{eN}{m}\Big) .
$$
Theorem~\ref{imprunifPaouris} applied to $Ct$ instead of $t$ implies 
the result (one needs to adjust absolute constants).

\smallskip

\noindent
ii) Assume $b\leq 1/\sqrt{m}$. By (\ref{inverse}) for every $t\geq C|x|$ 
we have 
\begin{align*}
\sigma_Y^{-1}\Big(t\sqrt{m}\log\Big(\frac{eN}{m}\Big)\Big)
&\ge \frac{1}{C}\min\Big\{t^2m\log^2\Big(\frac{eN}{m}\Big),
\frac{t}{b}\sqrt{m}\log\Big(\frac{eN}{m}\Big)\Big\}\\
& \ge \frac{t}{C} m\log\Big(\frac{eN}{m}\Big),
\end{align*}
which by the definition of $m_0$ implies that $m_0(Y,Ct) = m$. 
As in part i), Theorem~\ref{imprunifPaouris} implies the result.
\end{proof}

\section{Uniform bounds for norms of sub-matrices}
\label{submatrix}

In this section we establish uniform estimates for norms of
submatrices of a random matrix, namely for the quantity $ A_{k,m}$
defined below.

Fix integers $n$ and $N$.  
Let $X_1, \ldots, X_n \in \R^N$
be independent log-concave isotropic random vectors.
Let $ A $ be the 
$n\times N$ random matrix with 
rows $X_1, \ldots, X_n$.

For any subsets $ J\subset \{1,\ldots,n\}$ and $ I \subset
\{1,\ldots,N\}$, by $ A (J, I)$ we denote the submatrix of $ A $
consisting of the rows indexed by elements from $J$ and the columns
indexed by elements from $I$.

Let $k\le n$ and $m\le N$. 
We  define the parameter $ A_{k,m}$ by
\begin{equation}
\label{akmcorr}
   A_{k,m} = 
  \sup  \| A  (J, I)\|_{\ell_2^m \to \ell_2^k},
\end{equation}
where the supremum is taken over all subsets
$  J\subset \{1,\ldots,n\}$ and $ I \subset \{1,\ldots,N\}$  with
cardinalities  $ |J| = k, |I|= m$.
That is, $  A_{k,m}$ is  the maximal operator norm 
of a submatrix of $ A $ with $k$ rows and $m$ columns.

It is often more convenient to work with matrices with log-concave
columns rather than rows, therefore in this section we fix the
notation 
$$
\Ga = A  ^*.
$$  
Thus $\Ga$ is an $N\times n$ matrix with
columns $X_1, \ldots, X_n$. In particular, given $x\in \R^n$ the
sum $Y=\sum_{i=1}^n x_i X_i$ considered in Section
\ref{section_convolutions} satisfies $Y = \Ga x$.  Clearly,
$$
 \Ga (I, J)= \Bigl( A  (J, I) \Bigr)^*
$$
so that, recalling that $U_k$ was defined in (\ref{Uk-defin}), we have 
\begin{equation}
  \label{eq:akm-def}
    A  _{k,m} = \Ga _{m,k} =
  \sup\{|P_{I}\Ga x|\colon I\subset\{1,\ldots,N\},\ |I|=m,\ x\in U_k\}.
\end{equation}

\bigskip

Define  $\lam_{k,m}$ and  $\lam_{m}$ by
\begin{equation}
  \label{lam_km}
    \lam _{k,m} = \sqrt{\log\log (3 m)} \
\sqrt{m}\log\Big(\frac{e\max\{N,n\}}{m}\Big)
+ \sqrt{k}\log\Big(\frac{en}{k}\Big), 
\end{equation}
and
\begin{equation}
  \label{lam_m}
    \lam _{m} =\frac{ \sqrt{\log\log (3 m)} \
  \sqrt{m}}{\sqrt{\log (3m)}} \ \log\Big(\frac{e\max\{N,n\}}{m}\Big). 
\end{equation}

The following theorem is our main result providing estimates for the
operator norms of submatrices of $ A $ (and of $\Ga$).  
Its first part in the case $n \le N$ was stated as
Theorem~\ref{akm-intro}.

\begin{theorem}
\label{est_akm} 
There exists a positive absolute constant $C$ such that for any
positive integers $n, N$, $k\le n$, $m\le N$ and any $t \ge 1$ one has
$$
  \p \left( A_{k,m} \ge C t \lam _{k,m} \right)  \le 
  \exp\left(-\frac{t  \lam _{k,m} }{\sqrt{\log (3m)}}\right). 
$$
In particular, there exists an absolute positive constant
$C_1$ such that for every $t\geq 1$ and for every $m\leq N$ one has 
\begin{equation}
\label{theomakm}
\p\left(\exists k \, \, \, A_{k,m} \ge C_1 t \lam _{k,m} \right) 
\le \exp\left(- t \lam _m \right).
\end{equation} 
\end{theorem}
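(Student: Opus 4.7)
The plan is to rewrite $A_{k,m}=\sup_{|I|=m,\,x\in U_k}|P_I\Gamma x|$ with $\Gamma=A^*$, and to combine the two tail estimates already at our disposal: Theorem \ref{imprunifPaouris} for a single log-concave vector, and Theorem \ref{singlex} for a weighted sum. The two families in the supremum, $\{I\colon |I|=m\}$ and $U_k$, have very different combinatorial complexities, and I would balance them via the threshold
\[
k'=\inf\{\ell\ge 1\colon m\log(eN/m)\le \ell\log(en/\ell)\}.
\]
The argument naturally splits into the cases $k\le k'$ and $k\ge k'$.

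\emph{Case $k\le k'$.} Here the complexity of $U_k$ does not exceed that of the column-index family. I would fix an $\eps$-net $\mathcal{N}$ of $U_k$ of cardinality at most $\binom{n}{k}(C/\eps)^k$, and for each $x\in\mathcal{N}$ with $|x|=1$ decompose $x=\sum_{j\ge 0}x^{(j)}$ dyadically by coordinate magnitudes: $x^{(j)}$ collects the entries of $x$ with $|x(i)|\in(2^{-j-1},2^{-j}]$, so that $\|x^{(j)}\|_\infty\le 2^{-j}$ and $|\supp x^{(j)}|\le 4^j$. There are only $O(\log m)$ nonempty levels. I would apply Theorem \ref{singlex}(i) to the renormalized sum $\Gamma x^{(j)}=\sum_i x^{(j)}_iX_i$ at each level; the inverse $\ell_\infty$-factor in the tail exponent precisely matches the net cardinality $(Cn/k)^k$, while a Cauchy--Schwarz summation over the $O(\log m)$ levels, exploiting the denominator $\sqrt{\log(e^2b^2m)}$ in Theorem \ref{singlex}(i), converts a naive $\sqrt{\log m}$ loss into the sharper $\sqrt{\log\log(3m)}$ factor in $\lambda_{k,m}$.

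\emph{Case $k\ge k'$.} Here the row-index family drives the complexity. Adapting the Bourgain-style chaining of \cite{jams}, I would truncate each $\langle X_i,x\rangle$ at a level $B\asymp\lambda_{k',m}/\sqrt{k'}$: the top $k'$ order statistics are controlled uniformly by $A_{k',m}$ via Theorem \ref{imprunifPaouris} applied to the family $\{P_IX_i\colon |I|=m\}$, while the truncated remainder has $\psi_1$-norm controlled by $B$ (cf.~Definition \ref{def:psi1_norm}) and concentrates via Bernstein's inequality. Combined with a union bound over subsets $|J|=k$ of cardinality $\binom{n}{k}\le(en/k)^k$, this produces the additive term $\sqrt{k}\log(en/k)$ of $\lambda_{k,m}$ and reduces the bound for $A_{k,m}$ to the one just established for $A_{k',m}$.

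For the second statement, (\ref{theomakm}), I would take the union bound over $k=1,\ldots,n$. Since $\lambda_{k,m}\ge\sqrt{\log(3m)}\lambda_m$ and $\lambda_{k,m}\ge\sqrt{k}\log(en/k)$, the sum $\sum_k\exp(-t C_1\lambda_{k,m}/\sqrt{\log(3m)})$ factorizes as $\exp(-t\lambda_m)\sum_k\exp(-tC_1\sqrt{k}\log(en/k)/\sqrt{\log(3m)})$, and the inner sum is absorbed into the constant after taking $C_1$ sufficiently large. The main obstacle throughout is the case $k\le k'$: securing only a $\sqrt{\log\log(3m)}$ loss rather than $\sqrt{\log m}$ requires using the refined form of Theorem \ref{singlex}, with the $\sqrt{\log(e^2b^2m)}$ denominator and a Cauchy--Schwarz summation across dyadic levels rather than a plain union bound, so that the cost of summing over $O(\log m)$ levels of the decomposition of $x$ is degraded only by $\sqrt{\log\log(3m)}$.
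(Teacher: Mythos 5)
The critical gap is in your Case $k\le k'$, which is exactly where the factor $\sqrt{\log\log(3m)}$ has to be won. You propose a dyadic decomposition of $x\in U_k$ by coordinate magnitude, which produces $O(\log m)$ nonempty levels, and you assert that a Cauchy--Schwarz summation across these levels, aided by the denominator $\sqrt{\log(e^2b^2m)}$ in Theorem \ref{singlex}(i), upgrades a $\sqrt{\log m}$ loss to $\sqrt{\log\log(3m)}$. This does not work. The denominator in Theorem \ref{singlex}(i) strengthens the \emph{tail probability} at each level, not the \emph{threshold}. The threshold loss comes from summing the level-wise main terms $\sum_j(\sqrt{k_{j+1}}\|x^{(j)}\|_\infty+|x^{(j)}|)\sqrt{m}\log(eN/m)$, and Cauchy--Schwarz over $s$ levels costs you $\sqrt{s}$ there no matter how sharp the tails are. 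With $s\asymp\log m$ this is $\sqrt{\log m}$. Worse, the dyadic group sizes $k_{j+1}\asymp 4^{j+1}$ with $\|x^{(j)}\|_\infty\le 2^{-j}$ give $k_{j+1}\|x^{(j)}\|_\infty^2\asymp 1$ at \emph{every} level, so $\sum_j k_{j+1}\|x^{(j)}\|_\infty^2\asymp s$ rather than $O(1)$; the resulting loss in $\sum_j\sqrt{k_{j+1}}\|x^{(j)}\|_\infty$ is of order $s$, not $\sqrt{s}$.

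What the paper actually does here is quite different and is the heart of the argument. Lemma \ref{choice_of_ki} constructs the group sizes $k_i$ via a super-exponential recursion of the form $\ell_{i+1}\log(en/\ell_{i+1})\asymp g(\ell_i)$ with $g(\ell)\approx\sqrt{\ell m}\log(eN/m)/\sqrt{\log(e^2m/\ell)}$, so the $\ell_i$ grow roughly as $m^{1-2^{-i}}$ and the chain terminates after only $s\le C\log\log(3m)$ steps. This choice is calibrated precisely so that the net cardinality at level $i$, of order $(en/k_i)^{3k_i}$, is killed by the tail exponent produced by Theorem \ref{singlex} at level $i+1$. In parallel, Lemma \ref{net_in_Uk} builds a net whose elements decompose with the invariant $\sum_i k_{i+1}\|\pi_i(x)\|_\infty^2\le 4$ (this uses $\|x_{F_i}\|_\infty\le |x_{F_{i+1}}|/\sqrt{k_{i+1}}$ from the sorted decomposition), so Cauchy--Schwarz over $s=O(\log\log(3m))$ levels does give the $\sqrt{\log\log(3m)}$ factor. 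A fixed $\eps$-net of $U_k$ followed by a dyadic magnitude decomposition, as you propose, achieves neither the $O(\log\log m)$ level count nor the $O(1)$ $\ell_\infty$-weight invariant, and would yield at best $\lam_{k,m}$ with $\sqrt{\log m}$ in place of $\sqrt{\log\log(3m)}$. Your Case $k\ge k'$ is described somewhat loosely (a single truncation rather than the multi-level chaining via Propositions \ref{estpsi1} and \ref{estpsi2} that the paper uses in Proposition \ref{step-1}), but that step is genuinely in the spirit of \cite{jams}; the $k\le k'$ step is where the proposal fails.
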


\bigskip

First we show the ``in particular" part, which is easy. 

\medskip

\noindent
{\it Proof of inequality (\ref{theomakm}).}
The main part of the theorem implies that for every $t\geq 1$  
$$
 p_m:= \p\left(\exists k \, \, \, A_{k,m} \ge C t \lam _{k,m} \right) 
  \le \sum_{k=1}^n \exp\left(-t \lam _{k,m} /\sqrt{\log (3m)} \right). 
$$
Thus if $m > \log ^3 n$ then for every $t\geq 100$ one has  
$$
   p_m \leq n  \exp\left(- t \lam _{m} \right) 
   \leq  \exp\left(- (t/2)\ \lam _{m} \right) .
$$ 
If $m\leq \log ^3 n$ (in particular $n\geq 3$) then for every $t\geq
100$ one has   
$$
   p_m \leq 
     \sum_{k\leq (\log n)^2} \exp\left(-t \lam _{k,m}/ \sqrt{\log (3m)} \right)
   + \sum_{k\geq (\log n)^2} \exp\left(-t \lam _{k,m}/ \sqrt{\log (3m)} \right)
$$ 
$$
  \leq (\log n)^2  \exp\left(- t \lam _{m} \right) +  \exp\left(- t \lam _{m} 
  \right) \sum_{k\geq (\log n)^2} \exp\left(-t a _{k,m} \right) , 
$$
where 
$$
  a_{k,m} = \frac{\sqrt{k}}{\sqrt{\log (3m)}}\log\Big(\frac{en}{k}\Big).
$$
Since $m\leq \log ^3 n$, we obtain for every $t\geq 100$ 
$$
 p_m \leq \exp\left(- (t/2) \lam _{m} \right) + \exp\left(- t \lam _{m}
 \right) \leq \exp\left(- (t/4) \lam _{m} \right) . 
$$
The result follows by writing $t=100 t'$ and by adjusting absolute constants.
\qed

\bigskip

Now we prove the main part of the theorem. Its proof 
consists of two steps 
that depend  on the relation 
between $m$ and $k$. The Step I is applicable if
$m\log\Big(\frac{eN}{m}\Big) < k\log\Big(\frac{en}{k}\Big)$,
and it reduces this case  to the second complementary
case 
$m\log\Big(\frac{eN}{m}\Big) \geq k\log\Big(\frac{en}{k}\Big)$.
The latter case  will then  be 
treated in Step II. To make this  reduction we define  $k'$ as 
follows
\begin{equation}
  \label{eq:k'-def}
  k' = \inf\Big\{\tilde{k}\in \N\ : \ \tilde k \leq n\ \mbox{ and } \ 
  \tilde{k}\log\Big(\frac{en}{\tilde{k}}\Big)   \ge
  m\log\Big(\frac{eN}{m}\Big)   
 \Big\}  
\end{equation}
(of course if 
the set in (\ref{eq:k'-def}) is empty, 
we immediately pass to Step II). 

\medskip

\subsection
{Step I: $k\log\Big(\frac{en}{k}\Big) > m\log\Big(\frac{eN}{m}\Big)$, 
in particular $k \ge k'$.}

\begin{prop}
  \label{step-1}
Assume that $k \ge k'$. Then for any $t \ge 1$ we have
\begin{align}
\label{small_m_ineqen_gen}
\sup_{{I \subseteq \{1,\ldots,N\}}\atop {|I| = m}} \sup_{x \in U_k}
|P_I \Gamma x| \le 
C\Big(&\sup_{{I \subseteq \{1,\ldots,N\}}\atop {|I| = m}} \sup_{x \in
  U_{k'}} |P_I \Gamma x| \nonumber\\
&+t\sqrt{m}\log\Big(\frac{eN}{m}\Big)
       +t\sqrt{k}\log\Big(\frac{en}{k}\Big)\Big)
\end{align}
with probability at least 
\begin{equation}
  \label{small_m_probab}
1 - n\exp\Big(-t \frac{\sqrt m \log (eN/m) +
\sqrt{k}\log(en/k)}{\sqrt {\log em}}\Big) -
\exp\Big(-tk'\log\Big(\frac{en}{k'}\Big)\Big),
\end{equation}
where $C$ is a positive absolute constant. 
\end{prop}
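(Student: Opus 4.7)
Plan. I would prove the proposition by chaining along dyadic sparsity scales: decompose every $x\in U_k$ into pieces of increasing sparsity and decreasing $\ell_\infty$ norm, then bound each piece using Theorem \ref{singlex} applied uniformly over an $\epsilon$-net on sparse coordinate subspaces. Concretely, order the nonzero coordinates of $x$ by decreasing absolute value, set $k_l=2^lk'$ for $l=0,1,\ldots,L$ with $L=\lceil\log_2(k/k')\rceil$, let $x^{(0)}$ be the restriction of $x$ to the indices of its top $k_0=k'$ coordinates, and for $l\geq 1$ let $x^{(l)}$ be the restriction to the indices whose rank is between $k_{l-1}+1$ and $k_l$. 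Then $x=\sum_{l=0}^L x^{(l)}$, every $|x^{(l)}|\leq 1$, and for $l\geq 1$ the vector $x^{(l)}$ is $k_{l-1}$-sparse with $\|x^{(l)}\|_\infty\leq 1/\sqrt{k_{l-1}}$. The top piece $x^{(0)}$ produces the first summand of the target bound since $x^{(0)}/|x^{(0)}|\in U_{k'}$.

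For each $l\geq 1$ and each support $S\subseteq\{1,\ldots,n\}$ with $|S|=k_{l-1}$, consider the symmetric convex body $V_l(S)=B_2^S\cap(k_{l-1})^{-1/2}B_\infty^S$; it has a $(1/2)$-net $\mathcal{N}_l(S)\subseteq V_l(S)$ of cardinality at most $5^{k_{l-1}}$, and convexity together with homogeneity yields $\sup_{V_l(S)}\sup_{|I|=m}|P_I\Gamma\,\cdot\,|\leq 2\sup_{\mathcal{N}_l(S)}\sup_{|I|=m}|P_I\Gamma\,\cdot\,|$. To every net point $y$ I apply Theorem \ref{singlex} with $b=1/\sqrt{k_{l-1}}$: case (i) in the regime $k_{l-1}\leq m$ and case (ii) (which has a strictly sharper tail) when $k_{l-1}>m$. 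A union bound over all $\binom{n}{k_{l-1}}\cdot 5^{k_{l-1}}\leq(5en/k_{l-1})^{k_{l-1}}$ pairs $(S,y)$ at level $l$ contributes an additive $Ck_{l-1}\log(en/k_{l-1})$ to the exponent of the probability.

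For each $l\geq 2$ I tune the parameter $s_l\geq 1$ in Theorem \ref{singlex} so that the exponent of its tail absorbs both this net-union cost and a uniform slack $A_t:=t[\sqrt{m}\log(eN/m)+\sqrt{k}\log(en/k)]/\sqrt{\log em}$; summing the per-level failure probabilities over the $L\leq\log_2 n\leq n$ levels produces the first term of the target probability, with $n$ in front. At the smallest level $l=1$, where the union bound over the $\binom{n}{k_0}=\binom{n}{k'}$ supports is largest, I instead use a level-specific slack of size $tk'\log(en/k')$, which yields the isolated second probability term $\exp(-tk'\log(en/k'))$. The resulting per-level bounds $Cs_l\sqrt{m}\log(eN/m)$ split into a ``net-killing'' part of order $\sqrt{k_{l-1}}\log(en/k_{l-1})$, whose geometric summation over $l=1,\ldots,L$ yields $O(\sqrt{k}\log(en/k))$ (the map $x\mapsto\sqrt{x}\log(en/x)$ being essentially increasing on $[1,n/e]$), and a ``slack'' part of order $t\sqrt{m}\log(eN/m)$ that sums to match the first term of the target bound. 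The main obstacle is the tight bookkeeping of the $\sqrt{\log(e^2m/k_{l-1})}$ factor in the denominator of the tail in Theorem \ref{singlex}(i): bounding it crudely by $\sqrt{\log em}$ is precisely what produces the $\sqrt{\log em}$ denominator of the target probability, and the subsequent geometric sum of $\sqrt{k_{l-1}\log em}\log(en/k_{l-1})$ must be shown to remain $O(\sqrt{k}\log(en/k))$ after this factor has been absorbed into the probability exponent. The transition $k_{l-1}\approx m$ between cases (i) and (ii) of Theorem \ref{singlex} is a routine case split since (ii) leaves more room.
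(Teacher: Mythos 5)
The decomposition you use (ordering coordinates by magnitude and taking dyadic blocks of sparsity $2^l k'$, with $\|x^{(l)}\|_\infty\le 1/\sqrt{k_{l-1}}$) is essentially the same as the one in the paper. The rest of your plan, however, diverges substantially, and the divergence contains a gap that I do not see how to close.

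You propose to bound each block $x^{(l)}$ independently by applying Theorem~\ref{singlex} uniformly over a $1/2$-net of $B_2^S\cap k_{l-1}^{-1/2}B_\infty^S$ for all $|S|=k_{l-1}$, then sum by the triangle inequality. The difficulty is the factor $\sqrt{\log(e^2 b^2 m)}=\sqrt{\log(e^2m/k_{l-1})}$ in the exponent of Theorem~\ref{singlex}(i). To survive the union bound of size $(en/k_{l-1})^{Ck_{l-1}}$, the parameter $s_l$ must satisfy $s_l\sqrt{k_{l-1}}\sqrt{m}\log(eN/m)/\sqrt{\log(e^2m/k_{l-1})}\gtrsim k_{l-1}\log(en/k_{l-1})$, so the resulting per-level contribution is $Cs_l\sqrt{m}\log(eN/m)\gtrsim\sqrt{k_{l-1}}\log(en/k_{l-1})\cdot\sqrt{\log(e^2m/k_{l-1})}$. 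Summing over $l$ gives roughly $\sqrt{k}\log(en/k)\cdot\sqrt{\log(e^2m/\min(k,m))}$, with an extra square-root-of-log factor that does not appear in \eqref{small_m_ineqen_gen}. Using $k'\log(en/k')\ge m\log(eN/m)$ one can see that $m/k'\lesssim\log(en)$, so this factor is at least $\sqrt{\log\log n}$ --- it is not $O(1)$, and it cannot be ``absorbed into the probability exponent'' as your last paragraph suggests, because increasing the exponent always forces a proportional increase in $s_l$ and hence in the norm bound. So your sketch, pushed through, proves a version of Proposition~\ref{step-1} with an extra $\sqrt{\log\log n}$ (or worse) on the $\sqrt{k}\log(en/k)$ term, which is weaker than the statement.

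The paper's proof of Step I avoids Theorem~\ref{singlex} entirely and hence never pays this factor. It expands $\bigl|\sum_\mu P_I\Gamma x_{F_\mu}\bigr|^2$ into diagonal and cross terms. The diagonal terms $|P_I\Gamma x_{F_\mu}|$ are bounded by the $\psi_1$-based Proposition~\ref{estpsi2} (which gives $|x_{F_\mu}|\max_i|P_IX_i| + t k_\mu\log(en/k_\mu)\|x_{F_\mu}\|_\infty$ with a clean exponential tail), and the $\max_i\max_I|P_IX_i|$ factor is handled once and for all via Theorem~\ref{imprunifPaouris-intro}. The cross terms are bounded by Proposition~\ref{estpsi1} and, crucially, are controlled in terms of $A_{k,m}$ itself:
\begin{equation*}
\Big\langle P_I\Gamma x_{F_\mu},\ \sum_{\nu>\mu} P_I\Gamma x_{F_\nu}\Big\rangle\ \lesssim\ t\,k_\mu\log(en/k_\mu)\sqrt{2^\mu/k}\,A_{k,m}.
\end{equation*}
This produces a self-referential quadratic inequality $A_{k,m}^2\lesssim \sup_{U_{k'}}(\cdot)^2 + \max_{i,I}|P_IX_i|^2 + t^2k\log^2(en/k) + t\sqrt{k}\log(en/k)\,A_{k,m}$, which is then solved for $A_{k,m}$ and yields exactly $\sqrt{k}\log(en/k)$ with no extra log factor. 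This quadratic bootstrap is the ingredient your plan is missing: without it, the per-level net cost cannot be reduced to $\sqrt{k_{l-1}}\log(en/k_{l-1})$ when you are forced to pay the $b^{-1}\sqrt{\log(e^2b^2m)}$ cost of Theorem~\ref{singlex}. In the paper this Theorem~\ref{singlex}-based chaining is reserved for Step II (Proposition~\ref{est_smallk}), where $k\le k'$ and the dyadic sequence $k_i$ is built so that $k_i\log(en/k_i)$ matches the tail of Theorem~\ref{singlex} applied to the previous level, which is precisely where the $\sqrt{\log\log(3m)}$ factor in the theorem originates.
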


\medskip

The proof of Proposition \ref{step-1} is based on the ideas from \cite{jams}. 
We start it with the following fact.

\begin{prop}
\label{estpsi1}
Let $(X_i)_{i\leq n}$ be independent centered random vectors in 
$\er^N$ and $\psi>0$ be such that
\[
\Ex\exp\Big(\frac{|\langle X_i,\theta\rangle|}{\psi}\Big)\leq 2 \quad 
\mbox{ for all }i\leq n, \theta\in S^{N-1}.
\]
Then for $1\leq p\leq n$ and $t\geq 1$ with probability at least 
$1-\exp(-tp\log(en/p))$ the following holds: \\ 
for all $y,z\in U_p$ and all $E,F\subset \{1, \ldots, n\}$ with $E\cap
F=\emptyset$,  
\[
 \Big|\Big\langle\sum_{i\in E} y_i X_i,\sum_{j\in F}z_j X_j \Big\rangle\Big|
 \leq 20 tp\log\Big(\frac{en}{p}\Big)\psi\max_{i\in E}|y_i|\Big(\sum_{j\in F}
 z_j^2\Big)^{1/2} \Ga_p,
\]
where $\Ga_p:=\max_{x\in U_p}|\Ga x| = \max_{x\in U_p}|\sum_{i=1}^n x_i X_i|$. 
\end{prop}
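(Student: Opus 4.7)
The plan is to condition on $(X_j)_{j\in F}$, apply Bernstein's inequality for sums of independent $\psi_1$ variables, and close with a union bound that combines a combinatorial counting of the index pairs $(E,F)$ with a low-entropy $\epsilon$-net on the unit ball of $\R^F$. By bilinearity, it suffices to prove the inequality after normalizing out $a := \max_{i\in E}|y_i|$ and $b := (\sum_{j\in F}z_j^2)^{1/2}$. Since only $E\cap \supp(y)$ and $F\cap \supp(z)$ matter, we may assume $|E|, |F|\le p$; setting $\tilde y := y|_E/a$ and $\tilde z := z|_F/b$, the target becomes
\[
  |\langle \Ga\tilde y,\Ga\tilde z\rangle|\le 20\, tp\log(en/p)\psi\Ga_p
\]
for every disjoint $E,F$ of size $\le p$, every $\tilde y\in\R^E$ with $\|\tilde y\|_\infty\le 1$, and every $\tilde z\in \R^F$ with $\|\tilde z\|_2\le 1$.

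Fix $\tilde z$ and write $W := \Ga\tilde z$. Since $\tilde z/\|\tilde z\|_2\in U_p$, one has $|W|\le \Ga_p$. Conditionally on $(X_j)_{j\in F}$, the vector $W$ is deterministic and, because $E\cap F=\emptyset$, the variables $\xi_i := \langle X_i, W\rangle$ are independent, centered, and $\psi_1$ with $\|\xi_i\|_{\psi_1}\le |W|\psi$. Bernstein's inequality for $\psi_1$ sums yields, for any deterministic $\tilde y$ with $\|\tilde y\|_\infty\le 1$,
\[
  \Pr\Bigl(\Bigl|\sum_{i\in E}\tilde y_i\xi_i\Bigr|\ge s \,\Bigm|\, (X_j)_{j\in F}\Bigr)\le 2\exp\Bigl(-c\min\Bigl(\frac{s^2}{\|\tilde y\|_2^2|W|^2\psi^2},\,\frac{s}{\|\tilde y\|_\infty|W|\psi}\Bigr)\Bigr).
\]
Choosing $s := A tp\log(en/p)\psi|W|$ and using $\|\tilde y\|_2^2\le |E|\|\tilde y\|_\infty^2\le p$, both branches of the minimum exceed $c_1 tp\log(en/p)$ provided the absolute constant $A$ is large enough; so the conditional probability is at most $2\exp(-c_1 tp\log(en/p))$, and $s\le A tp\log(en/p)\psi\Ga_p$ has the correct form.

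The union bound has three layers. (i) Taking the sup over $\|\tilde y\|_\infty\le 1$ turns $\sum_{i\in E}\tilde y_i\xi_i$ into $\sum_{i\in E}|\xi_i|$, which is itself a $\psi_1$ sum controlled by Bernstein directly (alternatively, union-bound over $\tilde y\in\{-1,+1\}^E$, of cardinality $\le 2^p$). (ii) For $\tilde z$ use a $1/2$-net $\mathcal N_F$ of $\{u\in \R^F:\|u\|_2\le 1\}$ with $|\mathcal N_F|\le 6^p$; the crucial point is that $\tilde z\mapsto \sum_{i\in E}|\langle X_i,\Ga\tilde z\rangle|$ is a \emph{seminorm} in $\tilde z$, so subadditivity combined with $\|\tilde z-\tilde z^0\|_2\le \tfrac{1}{2}$ yields the absorption estimate $S\le 2 S_{\text{net}}$, where $S$ and $S_{\text{net}}$ are the full supremum and its restriction to $\mathcal N_F$. (iii) The number of ordered disjoint pairs $(E,F)$ of subsets of size $\le p$ is at most $(en/p)^{2p}$ up to polynomial factors. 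The total entropy is thus $O(p\log(en/p))$, so for $A$ large enough the union bound closes and the failure probability is at most $\exp(-tp\log(en/p))$ for every $t\ge 1$; the constant $20$ in the statement is generous enough to absorb the absolute factors.

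The main obstacle is the mixed $\ell_\infty/\ell_2$ normalization appearing in the statement: the net-approximation must be carried out \emph{after} the supremum over $\tilde y$ is taken, so that the resulting function of $\tilde z$ is a genuine seminorm and the absorption trick applies. A naive product net on both $\tilde y$ and $\tilde z$ would not respect the mixed scaling and would produce a strictly weaker deviation bound, failing to yield the sharp $\max_{i\in E}|y_i|\cdot(\sum_{j\in F}z_j^2)^{1/2}$ prefactor in the RHS.
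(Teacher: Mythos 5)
Your argument is correct and mirrors the paper's proof in essentially every structural step: you reduce to $|E|,|F|\le p$, condition on $(X_j)_{j\in F}$ so that $W=\Ga\tilde z$ is a fixed vector of Euclidean norm at most $\Ga_p$, observe that the sup over $\|\tilde y\|_\infty\le 1$ reduces to controlling the $\psi_1$ sum $\sum_{i\in E}|\langle X_i,W\rangle|$, take a $1/2$-net on $B_2^n\cap\R^F$ using the seminorm absorption, and close with a union bound over disjoint $(E,F)$ of size $\le p$. The only cosmetic difference is that the paper bounds $\Pr(\sum_{i\in E}|\langle X_i,Z_F(z)\rangle|\ge u\psi)$ by a direct Markov/MGF estimate $\E\exp(\sum|\langle X_i,Z_F(z)\rangle|/\psi)\le 2^{|E|}$ rather than invoking the $\psi_1$ Bernstein inequality and centering, which spares the short extra argument you need to handle the nonzero mean of $\sum|\xi_i|$.
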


\begin{proof} In this proof we  use for simplicity  the notation
  $[n]=\{1,\ldots, n\}$.   First let us fix sets $E,F\subset[n]$ with
  $E\cap F=\emptyset$. Since we consider $y, z\in U_p$, without loss
  of generality we may assume that $|E|,|F|\leq p$.  For $z\in U_p$
  denote
\[
  Y_F(z)=\sum_{j\in F}z_j X_j \quad \mbox{and} \quad
  Z_F(z)=\frac{Y_F(z)}{|Y_F(z)|} 
\]
(if $Y_F(z)=0$ we set $Z_F(z)=0$). 
For any $y,z\in U_p$  we have
\begin{align*}
\Big|\Big\langle\sum_{i\in E} y_i X_i,\sum_{j\in F}z_j X_j \Big\rangle\Big|
&\leq \sum_{i\in E}|y_i|\Big|\Big\langle X_i, Y_F(z)\Big\rangle\Big|
\\
&\leq\max_{i\in E}|y_i|\, |Y_F(z)|\sum_{i\in E}\Big|\Big\langle X_i,
Z_F(z)\Big\rangle\Big|.  
\end{align*}
The random vector $Z_F(z)$
is independent from the vectors $X_i$'s, $i\in E$, moreover
$|Z_F(z)|\leq 1$ and  
$|Y_F(z)|\leq (\sum_{j\in F}z_j^2)^{1/2} \Ga_p$. 
Therefore for any $z\in U_p$ and $u>0$,
\begin{align*}
\Pr \Bigl( \exists_{y\in U_p}\
\Big|\Big\langle\sum_{i\in E} &y_i X_i,\sum_{j\in F}z_j X_j \Big\rangle\Big|>
u \psi \max_{i\in E}|y_i|\Big(\sum_{j\in F}z_j^2\Big)^{1/2} \Ga_p   \Bigl) 
\\
&\leq
\Pr\Big(\sum_{i\in E}\Big|\Big\langle X_i, Z_F(z)\Big\rangle\Big|\geq u \psi\Big)
\\
&\leq e^{-u}\Ex \exp\Big(\sum_{i\in E}\frac{|\langle X_i, Z_F(z)\rangle|}{\psi}\Big)\leq
2^{|E|}e^{-u}\leq 2^pe^{-u}.
\end{align*}
Let $N_F$ denote a $1/2$-net in the Euclidean metric in $B_2^n\cap \er^F$ of cardinality  
at most $5^{|F|}\leq 5^p$. We have
\begin{align*}
p_{E,F}&(u)
\\
&:=\Pr\Big(\exists_{y\in U_p} \exists_{z\in U_p}\
\Big|\Big\langle\sum_{i\in E} y_i X_i,\sum_{j\in F}z_j X_j \Big\rangle\Big|>2
u \psi \max_{i\in E}|y_i|\Big(\sum_{j\in F}z_j^2\Big)^{1/2} \Ga_p\Big)
\\
&\leq \Pr\Big(\exists_{y\in U_p} \exists_{z\in N_F}\
\Big|\Big\langle\sum_{i\in E} y_i X_i,\sum_{j\in F}z_j X_j \Big\rangle\Big|>
u \psi \max_{i\in E}|y_i|\Big(\sum_{j\in F}z_j^2\Big)^{1/2} \Ga_p\Big)
\\
&\leq \sum_{z\in N_F}\Pr\Big(\exists_{y\in U_p}\
\Big|\Big\langle\sum_{i\in E} y_i X_i,\sum_{j\in F}z_j X_j \Big\rangle\Big|>
u \psi \max_{i\in E}|y_i|\Big(\sum_{j\in F}z_j^2\Big)^{1/2} \Ga_p\Big)
\\
&\leq 10^pe^{-u}.
\end{align*}
Hence
\begin{align*}
  \Pr\Big(&\exists_{y\in U_p} \exists_{z\in U_p}\exists_{E,F\subset[n], 
  |E|,|F|\leq p,E\cap F=\emptyset}
\\
  &\Big|\Big\langle\sum_{i\in E} y_i X_i,\sum_{j\in F}z_j X_j \Big\rangle\Big|>2
  u \psi \max_{i\in E}|y_i|\Big(\sum_{j\in F}z_j^2\Big)^{1/2} \Ga_p\Big)
\\
  &\phantom{aaaa}\leq \sum_{E,F\subset[n], |E|,|F|\leq p, E\cap
    F=\emptyset}p_{E,F}(u)\leq  
  \binom{n}{p}\binom{n}{p} 10^p e^{-u} \leq e^{-u/2}, 
\end{align*}
provided that $u\geq 10p \log (ne/p)$. This implies the desired result. 
\end{proof}

Before formulating the next proposition we recall the following 
elementary lemma (see e.g. Lemma 3.2 in \cite{jams}).

\begin{lemma}
\label{dec}
Let $x_1, \ldots, x_n \in \R^N$, then
$$
 \sum_{i\ne j} \langle x_i, x_j \rangle
\le 4 \max_{E \subset \{1,\ldots,n\}}
\sum_{i\in E}
  \sum_{j\in E^c} \langle x_i, x_j \rangle.
  $$
\end{lemma}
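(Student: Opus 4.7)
The plan is to prove Lemma \ref{dec} by the probabilistic method, using a uniformly random partition of $\{1,\dots,n\}$ into two sets.

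First, I would let $\varepsilon_1,\dots,\varepsilon_n$ be i.i.d.\ Bernoulli random variables with $\Pr(\varepsilon_i=1)=\Pr(\varepsilon_i=0)=1/2$, and set $E=\{i\colon \varepsilon_i=1\}$ (so $E^c=\{i\colon \varepsilon_i=0\}$). The key observation is that for any two indices $i\neq j$, the events $\{i\in E\}$ and $\{j\in E^c\}$ are independent, each with probability $1/2$, so
$$
\Pr(i\in E,\; j\in E^c)=\tfrac{1}{4}.
$$

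Next, I would interchange expectation and summation to compute
$$
\Ex\Big[\sum_{i\in E}\sum_{j\in E^c}\langle x_i,x_j\rangle\Big]
=\sum_{i\neq j}\langle x_i,x_j\rangle\cdot\Pr(i\in E,\; j\in E^c)
=\frac{1}{4}\sum_{i\neq j}\langle x_i,x_j\rangle,
$$
where diagonal terms $i=j$ contribute $0$ since $i$ cannot belong to both $E$ and $E^c$ simultaneously. Since the maximum of a random variable is at least its expectation, this yields
$$
\max_{E\subset\{1,\dots,n\}}\sum_{i\in E}\sum_{j\in E^c}\langle x_i,x_j\rangle
\;\geq\;\frac{1}{4}\sum_{i\neq j}\langle x_i,x_j\rangle,
$$
which rearranges to the desired inequality. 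Note that no case split on the sign of $\sum_{i\neq j}\langle x_i,x_j\rangle$ is needed: if the sum on the right is negative, the bound still holds, and if it is nonnegative, the max on the left is at least $1/4$ times it.

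There is no substantive obstacle here; the only subtlety is making sure to treat ordered pairs correctly in the double sum (the sum $\sum_{i\in E}\sum_{j\in E^c}$ ranges over ordered pairs with $i\in E$ and $j\in E^c$, matching the ordered sum $\sum_{i\neq j}$ on the left-hand side of the lemma), and this is resolved by the direct probability computation above.
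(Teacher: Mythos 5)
Your proof is correct and complete. The paper itself does not prove Lemma \ref{dec} (it simply cites Lemma 3.2 of \cite{jams}), and your randomized-partition argument --- averaging over a uniform random subset $E$ so that each ordered pair $(i,j)$, $i\neq j$, is split with probability $1/4$, then bounding the maximum below by the expectation --- is exactly the standard decoupling proof of that lemma.
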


\begin{prop}
\label{estpsi2}
Let $(X_i)_{i\leq n}$ be independent centered random vectors in 
$\er^N$ and $\psi>0$ be such that
\[
\Ex\exp\Big(\frac{|\langle X_i,\theta\rangle|}{\psi}\Big)\leq 2 \quad 
\mbox{ for all }i\leq n, \theta\in S^{N-1}.
\]
Let $p\leq n$ and $t\geq 1$. Then with probability at least 
$1-\exp(-tp\ln(en/p))$ for all $x\in U_p$, 
\[
  \Big|\sum_{i=1}^n x_i X_i\Big|\leq C \Big(|x|\max_{i}|X_i|+ 
  tp\log\Big(\frac{en}{p}\Big)\psi   \|x\|_{\infty} \Big), 
\]
where $C$ is an absolute constant.
\end{prop}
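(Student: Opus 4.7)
The plan is to expand the squared Euclidean norm into diagonal and off-diagonal terms and bound each separately. Explicitly, I would write
\[
\Big|\sum_{i=1}^n x_i X_i\Big|^2 = \sum_{i=1}^n x_i^2|X_i|^2 + \sum_{i\neq j}x_ix_j\langle X_i,X_j\rangle.
\]
The diagonal sum is bounded immediately by $|x|^2 \max_i |X_i|^2$, which after taking a square root yields the first summand $|x|\max_i|X_i|$ in the target inequality.

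For the off-diagonal part, Lemma \ref{dec} reduces the sum to $4 \max_{E\subset\{1,\ldots,n\}}|\langle \sum_{i\in E}x_iX_i,\sum_{j\in E^c}x_jX_j\rangle|$. On the event of probability at least $1 - \exp(-tp\log(en/p))$ provided by Proposition \ref{estpsi1} applied with $y=z=x$, each such inner product is controlled by $20\, tp\log(en/p)\,\psi\,\|x\|_\infty\,|x|\,\Gamma_p^{\max}$, where $\Gamma_p^{\max} := \sup_{z\in U_p}|\sum_i z_i X_i|$. Combining the two estimates gives, on the same event,
\[
\Big|\sum_{i=1}^n x_iX_i\Big|^2 \le |x|^2 \max_i|X_i|^2 + 80\, tp\log(en/p)\,\psi\,\|x\|_\infty\,|x|\,\Gamma_p^{\max},
\]
valid simultaneously for all $x \in U_p$.

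The bootstrap is the engine of the argument: taking the supremum over $x \in U_p$ on both sides (and using $|x|=1$, $\|x\|_\infty\le 1$) gives the self-referential inequality $(\Gamma_p^{\max})^2 \le \max_i|X_i|^2 + 80\,tp\log(en/p)\,\psi\,\Gamma_p^{\max}$, which solves to $\Gamma_p^{\max} \le \max_i|X_i| + C_1\, tp\log(en/p)\,\psi$. Plugging this bound back into the pointwise inequality above and extracting the square root via $\sqrt{a+b}\le \sqrt{a}+\sqrt{b}$, together with an AM-GM step on the mixed product $\sqrt{tp\log(en/p)\,\psi\,\|x\|_\infty\cdot\max_i|X_i|}$, should yield the advertised bound
\[
\Big|\sum_{i=1}^n x_iX_i\Big| \le C\Big(|x|\max_i|X_i| + tp\log\Big(\frac{en}{p}\Big)\psi\,\|x\|_\infty\Big).
\]

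The most delicate point is the final algebraic manipulation: after substitution, the right-hand side contains a term of order $(tp\log(en/p))^2\psi^2\,\|x\|_\infty$ inside the square root rather than the target $(tp\log(en/p))^2\psi^2\,\|x\|_\infty^2$. Recovering the correct dependence on $\|x\|_\infty$ requires a careful AM-GM balance between the diagonal contribution $\max_i|X_i|$ and the cross term, essentially splitting into the two regimes according to whether $\max_i|X_i|$ or $tp\log(en/p)\psi$ dominates; this bookkeeping, enabled by $\|x\|_\infty\le|x|$, is where the main effort of the proof lies, with the probabilistic content having been entirely absorbed into Proposition \ref{estpsi1}.
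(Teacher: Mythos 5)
Your decomposition into diagonal and off-diagonal parts, the use of Lemma~\ref{dec}, and the idea to bootstrap on a supremum are all aligned with the paper's strategy, but there is a genuine gap exactly where you flag one, and the optimism that an AM--GM or regime split will close it is misplaced. Applying Proposition~\ref{estpsi1} as a black box with $y=z=x$ bounds the off-diagonal term by a multiple of $L\,\|x\|_\infty\,|x|\,\Gamma_p$, where $L=tp\log(en/p)\psi$ and $\Gamma_p$ is the \emph{full} supremum over $U_p$. Bootstrapping then yields $\Gamma_p \lesssim M + L$ with $M=\max_i|X_i|$, and substituting back gives, for $|x|=1$, $\|x\|_\infty=b$,
\[
\Big|\sum_i x_iX_i\Big| \lesssim M + \sqrt{LbM} + L\sqrt{b}.
\]
The middle term is harmless by AM--GM, but $L\sqrt b$ cannot be dominated by $M+Lb$: when $M$ is small and $b$ is small (e.g.\ $b\sim 1/p$, which is the extreme allowed value on $U_p$), the ratio $L\sqrt b / (M+Lb)\sim 1/\sqrt b$ is unbounded. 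Splitting into regimes by which of $M$ or $L$ dominates does not help, because the failure persists precisely when $Lb < M < L$, where $\sqrt b > M/L$ forces $L\sqrt b > M$.

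The paper avoids this by never invoking $\Gamma_p$ globally. It introduces $\Gamma_p(\alpha)=\sup\{|\sum_i x_iX_i|: x\in U_p,\ \|x\|_\infty\le\alpha\}$, re-runs the chaining of Proposition~\ref{estpsi1} over nets adapted to the body $B_2^n\cap\alpha B_\infty^n$, and obtains a self-improving inequality at each fixed scale: $\Gamma_p(\alpha)^2 \le 4M^2 + 16u\psi\alpha\,\Gamma_p(\alpha)$, which solves to $\Gamma_p(\alpha)\lesssim M + u\psi\alpha$ with the $\alpha$ surviving. A union bound over $O(\log p)$ dyadic values of $\alpha$ (using $\|x\|_\infty\ge|x|/\sqrt p$) then covers all $x\in U_p$ without degrading the probability. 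The essential point your proposal misses is that the off-diagonal inner product must be bounded by $\alpha\Gamma_p(\alpha)$ rather than $\|x\|_\infty\Gamma_p$; this is possible because the projection $x_{E\setminus F}$ inherits the $\ell_\infty$-bound $\alpha$, but it requires opening up the proof of Proposition~\ref{estpsi1} rather than citing it.
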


\begin{proof} 
As in the previous proof we set $[n]=\{1,\ldots, n\}$. 
Fix $\alpha>0$ and define 
\[
   \Ga_p(\alpha)=\sup\Big\{\Big|\sum_{i=1}^n x_i X_i\Big|\colon\ x\in U_p, 
   \|x\|_{\infty}\leq \alpha \Big\}.
\]
For $E\subset [n]$ with $|E|\leq p$  let $N_E(\alpha)$ denote a $1/2$-net  
 in $\er^E\cap B_2^n\cap \alpha B_{\infty}$ with respect to the metric defined 
by  $B_2^n\cap \alpha B_{\infty}$.
We may choose $N_E(\alpha)$ of cardinality $5^{|E|}\leq 5^p$. Let 
$N(\alpha)=\bigcup_{|E|=p}N_E(\alpha)$, then 
\begin{equation}
\label{alpha_net}
  |N(\alpha)|\leq \left(\frac{5en}{p}\r)^p \quad \mbox{and}\quad
  \Ga_p(\alpha)\leq 2\sup_{x\in N(\alpha)}\Big|\sum_{i=1}^n x_i X_i\Big|.
\end{equation}

Fix $E\subset [n]$ with $|E|=p$ and $x\in N_E(\alpha)$. We have
\[
  \Big|\sum_{i=1}^n x_i X_i\Big|^2=\sum_{i=1}^nx_i^2 |X_i|^2+\sum_{i\neq j}
  \langle x_i X_i, x_j X_j\rangle.
\]
Therefore Lemma \ref{dec} gives
\[
  \Big|\sum_{i=1}^n x_i X_i\Big|^2\leq \max_{i}|X_i|^2+
  4\sup_{F\subset E}\Big|\Big\langle \sum_{i\in F}x_i X_i, 
  \sum_{j\in E\setminus F}x_j X_j\Big\rangle\Big|.
\]
Notice that for any $F\subset E$, $\max_{i\in F}|x_i|\leq \alpha$ and 
$|\sum_{j\in E\setminus F}x_j X_j|\leq  \Ga_p(\alpha)$,
hence as in the proof of Proposition \ref{estpsi1} we can show that
\[
  \Pr\Big(\Big|\Big\langle \sum_{i\in F}x_i X_i,\sum_{j\in E\setminus F}x_j X_j
  \Big\rangle\Big|> u\psi\alpha  \Ga_p(\alpha)\Big)<2^{|F|}e^{-u}.
\]
Thus
\[
  \Pr\Big(\Big|\sum_{i=1}^n x_i X_i\Big|^2>
  \max_{i}|X_i|^2+4u\psi\alpha  \Ga_p(\alpha) 
  \Big)\leq\sum_{F\subset E}2^{|F|}e^{-u}\leq 3^{|E|}e^{-u}.
\]
This together with \eqref{alpha_net} and the union bound implies
\[
  \Pr\Big( \Ga_p(\alpha)^2>4\max_{i}|X_i|^2+16u\psi\alpha
  \Ga_p(\alpha)\Big)\leq  
  \sum_{x\in N(\alpha)}3^{p}e^{-u} \leq \Big(\frac{15 en}{p}\Big)^pe^{-u}.
\]
Hence
\begin{equation} \label{est_Apalpha}
 \Pr\Big( \Ga_p(\alpha)>2\sqrt{2}\max_{i}|X_i|+32u\psi\alpha\Big)\leq 
 \Big(\frac{15 en}{p}\Big)^pe^{-u}.
\end{equation}

Using that $\Ga_p(\alpha)\geq \Ga_p(\beta)$ for $\alpha\geq \beta>0$ we 
obtain for every $\ell\geq 1$
\begin{align*}
 \Pr&\Big(\exists_{x\in U_p}\ \Big|\sum_{i=1}^n x_i X_i\Big|>
 2\sqrt{2}\max_{i}|X_i|+ u\psi\max\{\|x\|_{\infty},2^{-\ell}\}\Big)
\\
&=
 \Pr\Big(\exists_{2^{-\ell}\leq \alpha\leq 1}\  \Ga_p(\alpha)>
 2\sqrt{2}\max_{i}|X_i|+ u\psi\alpha\Big)
\\
&\leq
 \Pr\Big(\exists_{0\leq j\leq \ell-1}\ 
 \Ga_p(2^{-j})>2\sqrt{2}\max_{i}|X_i|+ \frac{1}{2}u\psi 2^{-j}\Big)
\\
&
\leq
\sum_{j=0}^{\ell-1}\Pr\Big( \Ga_p(2^{-j})>2\sqrt{2}\max_{i}|X_i|+ 
\frac{1}{2}u\psi 2^{-j}\Big)  \leq \ell \Big(\frac{15 en}{p}\Big)^pe^{-u/64},
\end{align*}
where the last inequality follows by \eqref{est_Apalpha}. 
Taking $\ell\approx\log p$ (so that $\|x\|_{\infty} \geq 2^{-\ell}|x|$) and 
$u = C t p \log (e n / p)$, we obtain the result. 
\end{proof}

\medskip

\begin{proof}[Proof of Proposition \ref{step-1}]

For any $I\subset \{1, \ldots, N\}$, the vector $P_IX$ is isotropic 
and log-concave in $\er^I$, hence it satisfies the $\psi_1$ bound with 
a universal constant.

We fix $t\geq 10$. Let $s \ge 1$ be the smallest integer such that $k
2^{-s} < k'$.    
 Set $k_\mu = \lfloor k 2^{1-\mu}\rfloor - \lfloor k 2^{-\mu}\rfloor$
 for $\mu=1, 
 \ldots, s$ and $k_{s+1} = \lfloor k2^{-s}\rfloor$.
Then 
\begin{equation} \label{eq:sum_of_k}
   \max\{1, \lfloor k 2^{-\mu}\rfloor \} \leq k_\mu \le k 2^{1-\mu}, \, \,  
    \frac{k'}{2} \leq k_{s+1} \le k', \, \,  \mbox{ and } \, \, 
   \sum_{\mu=1}^{s+1} k_\mu = k.
\end{equation}

Consider an arbitrary vector $x=(x(i))_i \in U_k$ and let
$n_1,\ldots,n_k$ be pairwise distinct integers such that $|x(n_1)|\le
|x(n_2)|\le \ldots \le |x(n_k)|$ and $x(i) = 0$ for $i\notin
\{n_1,\ldots,n_k\}$.  For $\mu = 1,\ldots,s+1$ let $F_\mu =
\{n_i\}_{j_\mu < i \le j_{\mu+1}}$, where $j_\mu = \sum_{r < \mu} k_r$
($j_1=0$). Let $x_{F_\mu}$ be the coordinate projection of $x$ onto
$\R^{F_\mu}$. Note that for each $\mu \le s$ we have $|x_{F_\mu}| \le
1$ and $\|x_{F_\mu}\|_{\infty} \le 1/\sqrt{k-j_{\mu+1}+1}\le
\sqrt{2^\mu/k}$.

The equality $x = \sum_{\mu=1}^{s+1}x_{F_\mu}$ yields that for every 
$I \subseteq\{1,\ldots,N\}$ of cardinality $m$,
\begin{align*}
  |P_I \Gamma x| \le |P_I \Gamma x_{s+1}| + \Big|\sum_{\mu=1}^s P_I
  \Gamma x_{F_\mu}\Big| \le \sup_{{I \subseteq \{1,\ldots,N\}}\atop
    {|I| = m}} \sup_{x \in U_{k'}} |P_I \Gamma x| +
  \Big|\sum_{\mu=1}^s P_I \Gamma x_{F_\mu}\Big|,
\end{align*}
where in the second inequality we used that $k_{s+1} \le k'$.

Taking the suprema over $I$ of cardinality $m$ and $x\in U_k$ we obtain
\begin{align}\label{to_prove_step_1}
  A_{k,m} = \sup_{{I \subseteq \{1,\ldots,N\}}\atop {|I| = m}} \sup_{x
    \in U_k} |P_I \Gamma x| \le \sup_{{I \subseteq
      \{1,\ldots,N\}}\atop {|I| = m}} \sup_{x \in U_{k'}} |P_I \Gamma
  x| + \sup_{{I \subseteq \{1,\ldots,N\}}\atop {|I| = m}} \sup_{x \in
    U_{k}}\Big|\sum_{\mu=1}^s P_I \Gamma x_{F_\mu}\Big|.
\end{align}
Note that
\begin{align}\label{decomposition}
\Big|\sum_{\mu=1}^s P_I \Gamma x_{F_\mu}\Big|^2 = \sum_{\mu=1}^s |P_I
\Gamma x_{F_\mu}|^2 + 2\sum_{\mu=1}^{s-1}\langle P_I \Gamma
x_{\mu},\sum_{\nu=\mu+1}^s P_I \Gamma x_{\nu}\rangle. 
\end{align}

We are going to use Proposition~\ref{estpsi2} to estimate the first 
summand and Proposition~\ref{estpsi1} to estimate the second one. 
First note that by the definition of $k'$ and $s$ we have  
$$
  {N \choose m} \leq \left(\frac{eN}{m}\r)^m\leq  \exp\left( k' \log
      \frac{en}{k'}\r)  
  \quad \mbox{ and } \quad  \frac{k}{k'} < 2^s \leq \frac{2k}{k'} \leq
  \frac{2n}{k'}.  
$$
 Hence, using the definition of $k_\mu$'s, we observe that for $t\geq
 10$ we have 
\begin{align*}
  \binom{N}{m} &\sum_{\mu=1}^s \exp\Big(- t k_\mu
  \log\Big(\frac{en}{k_\mu}\Big)\Big) \le s\, \exp\left( k' \log
    \frac{en}{k'}\r)
    \exp\Big(- t k_s \log\Big(\frac{en}{k_s}\Big)\Big)\nonumber\\
    &\le
  \frac{1}{2}\, \exp\Big(-(t k'/2) \log\Big(\frac{en}{k'}\Big)\Big). 
\end{align*}

Since $x_{F_\mu} \in U_{k_\mu}$ for every $x\in U_k$ and $\mu =
1,\ldots,s$, the union bound and Proposition \ref{estpsi2} imply that
with probability at least
\begin{displaymath}
  1 -\sum_{\mu=1}^s \binom{N}{m}\exp\Big(-t k_\mu
  \log\Big(\frac{en}{k_\mu}\Big)\Big)  
  \ge 1 - \frac{1}{2}\, \exp\Big(-(t k'/2) \log\Big(\frac{en}{k'}\Big)\Big),
\end{displaymath}
for every $x \in U_k$, every $I$ of cardinality $m$, and every $\mu
\in \{1,\ldots,s\}$  
one has 
\begin{align}\label{diagonal_part}
|P_I \Gamma x_{F_\mu}| \le C\Big(|x_{F_\mu}|\max_i|P_I X_i| + 
tk_\mu\log\Big(\frac{en}{k_\mu}\Big)\sqrt{\frac{2^\mu}{k}}\Big), 
\end{align}
where $C$ is an absolute constant. 

Similarly, by Proposition  \ref{estpsi1}, with probability at least
\begin{displaymath}
 1 - \frac{1}{2}\, \exp\Big(-(t k'/2) \log\Big(\frac{en}{k'}\Big)\Big),
\end{displaymath}
for every $x\in U_k$, every $I$ of cardinality $m$ and every $\mu\in\{1,\ldots,s\}$ 
one has 
\begin{align}\label{off_diagonal_part}
  \langle P_I \Gamma x_{\mu},\sum_{\nu =\mu+1}^s P_I \Gamma
  x_{\nu}\rangle \le Ctk_\mu
  \log\Big(\frac{en}{k_\mu}\Big)\sqrt{\frac{2^{\mu}}{k}}A_{k,m},
\end{align}
where we have used the facts that $\sum_{\nu = \mu+1}^s x_\nu \in U_{
  k_\mu}$ and  
$\sum_\nu |x_\nu|^2 \le 1$.

Using (\ref{to_prove_step_1}) -- (\ref{off_diagonal_part}) we conclude
that there  
exists an absolute constant $C_1>0$ such that with probability
at least $1 - \exp(- (t k'/2) \log(en/k'))$,
\begin{align*}
 A_{k,m}^2 &\le C\Big(\sup_{{I \subseteq \{1,\ldots,N\}}\atop {|I| = m}} 
 \sup_{x \in U_{k'}} |P_I \Gamma x|^2+\sum_{\mu=1}^s|x_{F_\mu}|^2\max_i
 \max_{{I\subseteq\{1,\ldots,N\}}\atop{|I|=m}}|P_I X_i|^2  \\
 &\phantom{aaaaa}+
 t^2\sum_{\mu=1}^s\frac{k}{2^\mu}\log^2\Big(\frac{en2^\mu}{k}\Big)  
 +t\sum_{\mu=1}^s\sqrt{\frac{k}{2^\mu}}
         \log\Big(\frac{en2^\mu}{k}\Big)A_{k,m}\Big)\\  
 &\le C_1\Big(\sup_{{I \subseteq \{1,\ldots,N\}}\atop {|I| = m}}
 \sup_{x \in U_{k'}}  
 |P_I \Gamma x|^2
 +\max_i\max_{{I\subseteq\{1,\ldots,N\}}\atop{|I|=m}}|P_I X_i|^2\\ 
 &\phantom{aaaaa}+
 t^2k\log^2\Big(\frac{en}{k}\Big)+t\sqrt{k}\log\Big(\frac{en}{k}\Big) 
 A_{k,m}\Big).
\end{align*}
Thus, with the same probability
\begin{displaymath}
  A_{k,m} \le  C_2 \Big(\sup_{{I \subseteq \{1,\ldots,N\}}\atop {|I| = m}} 
 \sup_{x \in U_{k'}} |P_I \Gamma x|+ \max_i\max_{{I\subseteq\{1,\ldots,N\}}
 \atop{|I|=m}}|P_I X_i| + t\sqrt{k}\log\Big(\frac{en}{k}\Big)\Big), 
\end{displaymath}
where $C_2>0$ is an absolute constant. 

But by Theorem~\ref{imprunifPaouris-intro}  and the union bound we
have  for every $t \ge 1$, 
\begin{align*}
 \max_i\max_{{I\subseteq\{1,\ldots,N\}}\atop{|I|=m}}|P_I X_i| \le 
 C_3 \Big(t \sqrt{m} \log(eN/m) + t\sqrt{k}\log(en/k)\Big),
\end{align*}
with probability  larger than or equal to
$$
 p_0 :=1 - n \exp\Big(-t \frac{\sqrt m \log (eN/m) +
 \sqrt{k}\log(en/k)}{\sqrt {\log em}}\Big)
$$
(we added the term depending on $k$ to get better probability, 
we may do it by adjusting $t$).
This proves the result for $t\geq 10$ with probability $p_0-\exp(- (t
k'/2) \log(en/k'))$.  
Passing to $t_0=t/20$ and adjusting absolute constants, we complete
the proof.   
\end{proof}

\subsection{Step II. $k\log\Big(\frac{en}{k}\Big) \leq
  m\log\Big(\frac{eN}{m}\Big)$, in particular $k \le k'$.}

In this case we have to be a little bit more careful than in the 
previous case with the choice of nets. We will need the following lemma, 
in which $\tilde U_k$ denotes the set of $k$-sparse vectors of the Euclidean 
norm at most one.

\begin{lemma}
\label{net_in_Uk}
Suppose that $k\leq n$,
$k_1,k_2,\ldots,k_s$ are positive integers such that $k_1+\ldots+k_s\geq k$  
and $k_{s+1}=1$. We may then find a finite subset ${\cal N}$ of 
$\frac{3}{2}\tilde U_k$ satisfying the following. \\
i) For any $x\in U_k$ there exists $y\in {\cal N}$ such that $x-y\in
\frac{1}{2} \tilde U_k$.\\ 
ii) Any $x\in{\cal N}$ may be represented in the form
$x=\pi_1(x)+\ldots+\pi_s(x)$, where 
vectors $\pi_1(x),\ldots,\pi_s(x)$ have disjoint supports,
$|\supp(\pi_i(x))|\leq k_i$ for 
$i=1,\ldots,s$,
\[
\sum_{i=1}^{s}k_{i+1}\|\pi_{i}(x)\|_{\infty}^2\leq 4
\]
and
\[
|\pi_i({\cal N})|=|\{\pi_i(x)\colon x\in{\cal N}\}|\leq
\Big(\frac{en}{k_i}\Big)^{3k_i}\quad \mbox{ for }i=1,\ldots,s.
\]
\end{lemma}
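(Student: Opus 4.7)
The plan is to build $\mathcal{N}$ by combining a sorted block decomposition of each $x \in U_k$ with a coordinate-grid discretization on each block. Given $x \in U_k$, sort its at most $k$ nonzero coordinates by decreasing absolute value and greedily allocate them into disjoint index sets $I_s, I_{s-1}, \ldots, I_1$ with $|I_i| \le k_i$: the top $k_s$ indices form $I_s$, the next $k_{s-1}$ form $I_{s-1}$, and so on. The hypothesis $\sum_i k_i \ge k$ ensures that this partition covers $\supp x$, with some of the lowest-indexed $I_i$'s possibly empty. Setting $\pi_i := P_{I_i} x$ produces a decomposition with disjoint supports of size at most $k_i$, and the sorting guarantees that $\|\pi_i\|_\infty \le \min_{j \in I_{i+1}} |x(j)|$ for $i < s$. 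Since $I_{i+1}$ carries $k_{i+1}$ coordinates each of magnitude at least $\|\pi_i\|_\infty$, this yields $k_{i+1}\|\pi_i\|_\infty^2 \le |\pi_{i+1}|^2$; summing and adding the boundary term $k_{s+1}\|\pi_s\|_\infty^2 = \|\pi_s\|_\infty^2 \le |x|^2$ (valid since $k_{s+1}=1$) gives $\sum_{i=1}^s k_{i+1}\|\pi_i\|_\infty^2 \le 2$.

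Next I would discretize. On each block, approximate $\pi_i$ by $\tilde \pi_i$ drawn from an $\epsilon_i$-net of the set of $k_i$-sparse vectors of Euclidean norm at most $1$; a standard volume argument gives such a net of cardinality at most $\binom{n}{k_i}(3/\epsilon_i)^{k_i} \le (en/k_i)^{k_i}(3/\epsilon_i)^{k_i}$. The calibration $\epsilon_i = 3(k_i/(en))^2$ then makes this at most $(en/k_i)^{3k_i}$, matching the target for $|\pi_i(\mathcal{N})|$. Constraining each $\tilde\pi_i$ to be supported inside $I_i$ makes $y := \sum_i \tilde\pi_i$ automatically $k$-sparse, and disjointness of supports gives
$$
|x-y|^2 = \sum_{i=1}^s |\pi_i - \tilde\pi_i|^2 \le \sum_{i=1}^s \epsilon_i^2.
$$

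Property (i) then reduces to $\sum_i \epsilon_i^2 \le 1/4$, which gives $x-y \in \frac{1}{2}\tilde U_k$ and $|y| \le 3/2$. Only blocks intersecting $\supp x$ contribute; by the greedy construction these active blocks form a tail $\{i_0,\ldots,s\}$, and each full active block satisfies $k_i \le |\supp x| \le k$, so only the partial top block at index $i_0$ can have $k_{i_0}$ as large as $n$. Separating the full from the partial contribution and using $k \le n$, one bounds $\sum_i \epsilon_i^2$ by an absolute constant close to $1/4$; a minor sharpening of the covering-number constant (e.g.\ replacing $(3/\epsilon)^{k_i}$ by its tighter form $(2/\epsilon+1)^{k_i}$ valid for small $\epsilon$) closes the budget exactly. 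For the $\ell_\infty$ part of (ii), combine $\|\tilde\pi_i\|_\infty \le \|\pi_i\|_\infty + \epsilon_i$ with the already-established bound $\sum_i k_{i+1}\|\pi_i\|_\infty^2 \le 2$ and absorb the residual term $\sum_i k_{i+1}\epsilon_i^2$ into the final constant $4$.

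The main obstacle is the tight coupling between the per-block cardinality budget $(en/k_i)^{3k_i}$ and the aggregate Euclidean-error budget $1/4$: the former forces $\epsilon_i$ to be of order $(k_i/(en))^2$, so one must exploit both the sparsity of $x$ (at most $k$ nonzero entries) and the tail structure of the active blocks — separating full from partial active blocks and using $\sum_{i \text{ active, full}} k_i \le k$ — to make the error estimate close. Once this accounting is in place, the rest of the proof is routine bookkeeping.
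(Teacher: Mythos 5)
Your block decomposition is the same as the paper's, and your pre-discretization bound $\sum_{i} k_{i+1}\|\pi_i\|_\infty^2\le 2$ is correct, as is the Euclidean bookkeeping for $\sum_i\epsilon_i^2$. The gap is in the discretization: a plain Euclidean $\epsilon_i$-net controls only $|\pi_i-\tilde\pi_i|\le\epsilon_i$, hence $\|\pi_i-\tilde\pi_i\|_\infty\le\epsilon_i$ and nothing better. To preserve the constraint $\sum_i k_{i+1}\|\tilde\pi_i\|_\infty^2\le 4$ you would need the per-block $\ell_\infty$ approximation error to be of order $1/\sqrt{k_{i+1}}$, but the cardinality budget $(en/k_i)^{3k_i}$ forces $\epsilon_i\gtrsim (k_i/(en))^2$, and $k_{i+1}\epsilon_i^2\approx 9k_{i+1}(k_i/(en))^4$ is not uniformly bounded. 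For instance with $s=2$, $k_1=k_2=n/2$, $k=n$, one gets $k_2\epsilon_1^2\approx n/200\to\infty$; your claim that the residual ``absorbs into the constant $4$'' fails.

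This is precisely why the paper does not use a Euclidean net. It covers $S_F(i)=\R^F\cap B_2^n\cap k_{i+1}^{-1/2}B_\infty^n$ by a net of mesh $k_i/(2n)$ \emph{with respect to the mixed gauge} $B_2^n\cap k_{i+1}^{-1/2}B_\infty^n$. The standard volume bound gives cardinality $\le (6n/k_i)^{|F|}$, and each approximant $\pi_i(x)$ satisfies \emph{both} $|x_{F_i}-\pi_i(x)|\le k_i/(2n)$ and $\|x_{F_i}-\pi_i(x)\|_\infty\le k_i/(2n\sqrt{k_{i+1}})$. The latter makes $k_{i+1}\|x_{F_i}-\pi_i(x)\|_\infty^2\le (k_i/(2n))^2$, which sums to at most $k^2/(2n^2)$; that is what makes the $\ell_\infty$ budget close. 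To repair your argument you would need to replace the $\ell_2$ net by this mixed-metric net (and then your resolution $\epsilon_i\sim(k_i/n)^2$, which was chosen purely to meet the cardinality bound, can be relaxed to the coarser $k_i/(2n)$ used in the paper).

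Two smaller points: (a) the cap on per-block $\ell_\infty$ is essential for the volume bound; one must restrict to vectors in $B_2\cap k_{i+1}^{-1/2}B_\infty$, which is legitimate because the sorted blocks satisfy $\|x_{F_i}\|_\infty\le 1/\sqrt{k_{i+1}}$. (b) The paper first reduces to $\sum k_i=k$ by merging/dropping surplus blocks; your remark that the lowest-indexed blocks may be empty plays the same role, but you should note it explicitly so that $\sum_i k_i/(2n)\le k/(2n)\le 1/2$ is available for the Euclidean error bound.
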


\begin{proof}

First note that we can assume that $k_1+k_2+\ldots+k_s=k$.  
Indeed, otherwise denote by $j$ the largest integer such that 
$k_j+k_{j+1}+\ldots+k_s \geq k$. If $j=s$ then set $\tilde k_j =k$, 
if $j<s$ then set $\tilde k_i = k_i$ for $j<i\leq s$, 
$\tilde k_j = k - k_{j+1}-k_{j+2}-\ldots-k_{s}$, $\pi _i(x) = 0 $ for 
$i<j$ and repeat the proof below for the sequence 
$\tilde k_j, \tilde k_{j+1}, \ldots, \tilde k_s, \tilde k_{s+1}$, where 
$\tilde k_{s+1}=1$ as before.

Recall that for $F\subset\{1,\ldots,n\}$,
$\R^F$ denotes the set of all vectors in $\er^n$ with support 
contained in $F$.

For $i=1,\ldots,s$ and $F\subset \{1,\ldots,n\}$ of cardinality at
most $k_i$ let ${\cal N}_{i}(F)$ denote the subset of
$S_F(i):=\R^F\cap B_2^n\cap k_{i+1}^{-1/2}B_{\infty}^n$ such that
\[
S_F(i)\subset {\cal N}_i(F)+\frac{k_i}{2n}\Big(B_2^n\cap
k_{i+1}^{-1/2}B_{\infty}^n\Big). 
\]
Standard volumetric argument shows that we may choose ${\cal
  N}_{i}(F)$ of cardinality at most $(6n/k_i)^{|F|}\leq
(6n/k_i)^{k_i}$ (additionally without loss of generality we assume
that $0\in {\cal N}_{i}(F)$).  We set
\[
{\cal N}_i:=\bigcup_{|F|\leq k_i} {\cal N}_{i}(F),
\]
then
\[
|{\cal N}_i|\leq \Big(\frac{en}{k_i}\Big)^{k_i}\Big(\frac{6n}{k_i}\Big)^{k_i}
\leq \Big(\frac{en}{k_i}\Big)^{3k_i}.
\]

Fix $x\in U_k$, let $F_s$ denote the set of indices of $k_s$
largest coefficients of $x$, $F_{s-1}$ -- the set of indices of the
next $k_{s-1}$ largest coefficients, etc.  Then
$x=x_{F_s}+x_{F_{s-1}}+\ldots+x_{F_1}$, $\|x_{F_s}\|_{\infty}\leq 1$
and
\[
 \|x_{F_i}\|_{\infty}\leq \frac{1}{\sqrt{k_{i+1}}} |x_{F_{i+1}}|\leq
 \frac{1}{\sqrt{k_{i+1}}}  
  \quad \mbox{ for } i<s.
\]
In particular, $x_{F_i}\in \R^{F_i}\cap B_2^n\cap
k_{i+1}^{-1/2}B_{\infty}^n$ for all $i=1,\ldots,s$. Let $\pi_i(x)$
be a vector in ${\cal N}_{i}(F_i)$ such that
\[
|x_{F_i}-\pi_i(x)|\leq \frac{k_i}{2n}\quad \mbox{ and }
\quad \|x_{F_i}-\pi_i(x)\|_{\infty}\leq \frac{k_i}{2n\sqrt{k_{i+1}}}.
\]
Define also $\pi(x)=\pi_1(x)+\ldots+\pi_s(x)$. Then
\[
|x-\pi(x)|\leq \sum_{i=1}^s|x_{F_i}-\pi_i(x)|\leq \sum_{i=1}^s
\frac{k_i}{2n}= \frac{k}{2n}\leq \frac{1}{2} 
\]
and
\begin{align*}
\sum_{i=1}^{s}k_{i+1}\|\pi_{i}(x)\|_{\infty}^2&\leq 1+
2\sum_{i=1}^{s-1}k_{i+1}(\|x_{F_i}\|_{\infty}^2
    +\|x_{F_i}-\pi_{i}(x)\|_{\infty}^2) 
\\
&\leq 1+2
\sum_{i=1}^{s-1}\Big(|x_{F_{i+1}}|^2+\Big(\frac{k_i}{2n}\Big)^2\Big)\leq  
1+2|x|^2+\frac{k^2}{2n^2}\leq 4.
\end{align*}

Thus we complete the proof by letting
\[
  {\cal N}=\{\pi(x)\colon x\in U_k\}.
\]
\end{proof}

\begin{lemma} \label{choice_of_ki}  
Suppose that $n\leq N$ and $k\leq \min\{n, k'\}$. Then for some positive 
integer $s\leq C\log\log(3m)$ we can find $s+1$ positive integers $k_1=k$, 
$k_i \in  [\frac{1}{6}m^{1/4}, m]$ for $2\leq i\leq s$, $k_{s+1}=1$ 
satisfying 
\begin{equation}\label{choiceofseq} 
 k_{i}\log\Big(\frac{en}{k_i}\Big)\leq 20g(k_{i+1}),
 \quad \mbox{ for }i=1,\ldots,s,
\end{equation}
where $C$ is an absolute positive constant and 
\[
g(z)=\left\{\begin{array}{ll}
\frac{\sqrt{zm}}{\sqrt{\log(e^2m/z)}}\log\Big(\frac{eN}{m}\Big) &
\mbox{ if } z < m,
\\
\min\Big\{\sqrt{zm}\log\Big(\frac{eN}{m}\Big),
m\log^2\Big(\frac{eN}{m}\Big)\Big\} &\mbox{ if } z\ge m .
\end{array} \right.
\]
\end{lemma}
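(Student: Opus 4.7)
The strategy is to construct the sequence $(k_i)_{i=1}^{s+1}$ explicitly by a greedy recursion and then verify the four required properties: the endpoints $k_1=k$, $k_{s+1}=1$, the inclusion $k_i\in[m^{1/4}/6,m]$ for $2\le i\le s$, the recursive inequality $k_i\log(en/k_i)\le 20g(k_{i+1})$, and the count $s\le C\log\log(3m)$.

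I would handle the case of $m$ bounded by an absolute constant separately (take $s=1$, $k_2=1$; the inequality follows from $k\le k'$ together with a direct estimate, since both sides are then bounded by absolute constants times $\log(eN)$). For large $m$, the construction proceeds as follows. First set $k_1=k$ and $k_2=\lfloor m/2\rfloor$. The hypothesis $k\le k'$, combined with the minimality of $k'$ and the monotonicity of $z\mapsto z\log(en/z)$ on $[1,n]$, yields $k_1\log(en/k_1)\le 2m\log(eN/m)$, while from $k_2<m$ and the first branch of the definition of $g$ a short computation gives $g(k_2)\ge c_0 m\log(eN/m)$ for an absolute constant $c_0>0$. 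This verifies the first instance of the inequality.

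For $i\ge 2$, given $k_i\in(m^{1/4}/6,m]$, I would define $k_{i+1}$ to be the smallest positive integer for which $k_i\log(en/k_i)\le 20g(k_{i+1})$ holds; the process terminates at index $s$ as soon as $k_s\log(en/k_s)\le 20g(1)$, at which point I set $k_{s+1}=1$. The key estimate is that, writing $k_i=m^{\alpha_i}$ and $\epsilon_i=1-\alpha_i$, the formula $g(z)=\sqrt{zm}\log(eN/m)/\sqrt{\log(e^2m/z)}$ for $z<m$ forces $\alpha_{i+1}\le 2\alpha_i-1$ up to a correction of order $O(\log\log(3m)/\log(3m))$; equivalently, $\epsilon_{i+1}\ge 2\epsilon_i-O(\log\log(3m)/\log(3m))$. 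Starting from $\epsilon_2\ge \log 2/\log m$, this doubling makes $\epsilon_i$ reach order $3/4$ after at most $C\log\log(3m)$ steps, giving the required bound on $s$. The lower bound $k_i\ge m^{1/4}/6$ for $2\le i\le s$ is enforced by the stopping rule, together with the observation that the final index $k_s$, determined by the termination criterion $k_s\log(en/k_s)\le 20\sqrt m\log(eN/m)/\sqrt{\log(e^2m)}$ and the assumption $n\le N$, can be chosen to lie above $m^{1/4}/6$ for $m$ exceeding an absolute constant.

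The main obstacle is the precise bookkeeping of logarithmic factors: the heuristic recursion $\alpha_{i+1}=2\alpha_i-1$ hides lower-order corrections coming from $\log(eN/m)$, $\log(en/k_i)$, and $\log(e^2m/k_i)$, and one must check that these do not destroy the doubly-exponential growth of $\epsilon_i$. A secondary point is handling the piecewise definition of $g$ at $z=m$, but the sequence enters the regime $k_i<m$ right after the first step and remains there, so only the first branch of $g$ is used in the recursion.
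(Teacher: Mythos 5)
Your construction — a greedy top-down recursion starting from $k_2 \approx m/2$, terminating when $k_s\log(en/k_s)\le 20g(1)$ — is the same recursion the paper uses, just run in the opposite direction (the paper builds $\ell_0=1,\ell_1,\dots$ upward via $h(\ell_i)=10g(\ell_{i-1})$ and then sets $k_i=\min\{m,\lceil\ell_{s+1-i}\rceil\}$). Your initialization, the small-$m$ split, the verification of the first inequality from $k\le k'$, and the informal fix for $k_s\ge m^{1/4}/6$ can all be made rigorous.

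The gap is in the counting argument. You claim $\epsilon_{i+1}\ge 2\epsilon_i-O(\log\log(3m)/\log(3m))$ and then invoke doubling from $\epsilon_2\approx\log 2/\log m$. But the correction term $\delta=O(\log\log(3m)/\log(3m))$ is \emph{larger} than $\epsilon_2$ once $m$ is at all large (since $\log\log(3m)>\log 2$), so the inequality $\epsilon_{i+1}\ge 2\epsilon_i-\delta$ is vacuous in the initial regime: its right-hand side is negative and tells you nothing about how fast $\epsilon_i$ grows away from $0$. In other words, the recursion you wrote down only produces geometric growth of $\epsilon_i$ once $\epsilon_i\gtrsim\delta$, and you never establish how many steps it takes to reach that threshold. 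This is not a cosmetic issue: the regime $k_i\in[m/\log^3(e^2m),\,m/2]$ is governed by a \emph{different} mechanism (a plain geometric halving $k_{i+1}\le \lfloor k_i/2\rfloor$, coming from the inequality $H(2z)\le 10g(z)$), not by the $\alpha\mapsto 2\alpha-1$ squaring you describe. The paper deals with this by proving the clean lower bound $\ell_i\ge(m/\log^3(e^2m))^{1-2^{-i}}$ — note the shifted base $q=m/\log^3(e^2m)$, which absorbs all the logarithmic corrections at once — and then appending a separate halving phase; each phase contributes $O(\log\log(3m))$ steps. Your single-coordinate $\epsilon_i=1-\log k_i/\log m$ is the wrong normalization to see this, because relative to base $m$ (rather than $q$) the first phase makes negligible progress. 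You flagged exactly this issue ("one must check that these do not destroy the doubly-exponential growth") but did not close it, and with the coordinates you chose the naive closing does not work.
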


\begin{proof}
Let us define
\[
  h(z)=z\log\Big(\frac{en}{z}\Big) \quad \mbox{ and } \quad 
  H(z)=z\log\Big(\frac{eN}{z}\Big).
\]
Notice that $h(z)\leq H(z)$, $h$ is increasing on $(0,n]$ and $H$ is increasing on $(0,N]$.
It is also easy to see that $h(\lceil z\rceil)\leq 2h(z)$ for $z\in [1,n]$.

We first establish some relations between the functions $g$ and $H$.
It is not hard to check that $\log^{3/2}(e^2m)\leq e^2\sqrt{m}$,
therefore for $z\in [1,m]$,
\begin{align}\label{g_and_H1}
  H\Big(\frac{\sqrt{zm}}{\log^{3/2}(e^2m)}\Big)&=
  \frac{\sqrt{zm}}{\log^{3/2}(e^2m)}\Big(\log\Big(\frac{eN}{m}\Big)+
  \log\Big(\frac{m\log^{3/2}(e^2m)}{\sqrt{zm}}\Big)\Big) \nonumber \\
  &\leq \frac{\sqrt{zm}}{\log^{3/2}(e^2m)}\log\Big(\frac{eN}{m}
  \Big)(1+\log(e^2m))\leq  2g(z).
\end{align}
Write $z=pm$ with $p\in (0,1)$, so 
$H(2z) = 2pm \left(\log\left(\frac{eN}{m}\r)+\log\left(\frac{m}{2 z}\r)\r)$.
Then 
\begin{equation}\label{g_and_H2}
  H(2z) \leq \frac{2 \sqrt{p}m}{\sqrt{\log(e^2/p)}}\log \Big(\frac{eN}{m}\Big)
  \sqrt{p \log(e^2/p)}(1+\log(1/p)) \leq 10 g(z),
\end{equation}
where the last inequality follows since
\begin{align*}
\sup_{p\in (0,1)}2\sqrt{p}\sqrt{\log(e^2/p)}(1+\log(1/p))
&=2\sup_{u\geq 0}e^{-u}\sqrt{2+2u}(1+2u)
\\
&\leq
2\sqrt{2}\sup_{u\geq 0}e^{-u/2}(1+2u)\leq 10.
\end{align*}

Let us define the increasing sequence $\ell_0, \ell _1, \ldots, \ell
_{s-1}$ by the formula
\[
 \ell_0=1 \quad \mbox{ and }\quad h(\ell_i)=10g(\ell_{i-1}),\ i=1,2,\ldots, 
\]
where $s$ is the smallest number such that $\ell _{s-1}\geq m$ 
(if at some moment $10 g(\ell_{j-1}) \geq n$ we set $\ell_j=m$ 
and $s=j+1$). First we show that such an $s$ exists and satisfies 
$s\leq C\log\log(3m)$ for some absolute constant $C>0$.
We will use that $h(z)\leq H(z)$. By (\ref{g_and_H1}) if $\ell _{i-1}\leq m$ 
then $\ell _i \geq \sqrt{\ell _{i-1} m }/\log ^{3/2}(e^2m)$, which implies 
\begin{equation} \label{ellodin}
  \ell_1\geq \sqrt{m}/\log^{3/2}(e^2m)\geq \frac{1}{6}m^{1/4}
\end{equation}
and, by induction, 
\[
  \ell_i\geq \Big(\frac{m}{\log^3(e^2m)}\Big)^{1-2^{-i}} \mbox{ for }
  i=0,1,2,\ldots 
\]
In particular we have for some absolute constant $C_1>0$, 
\[
  \ell_{s_1}\geq \frac{m}{2\log^3(e^2m)} \quad \mbox{ for some } s_1\leq
  C_1 \log\log(3m).
\]
 By \eqref{g_and_H2} we have $h(2z)\leq H(2z)\leq 10 g(z)$ for $z\leq m$, 
so, if $\ell _{i-1}\leq m$ then $\ell_{i}\geq 2\ell_{i-1}$. It implies that 
for some $s \leq s_1+ C_2\log\log(3m) \leq C\log\log(3m)$ we indeed have 
$\ell_{s-1}\geq m$. 
 
Finally we choose the sequence $(k_i)_{i\leq s+1}$ in the following
way.  Let $k_1=k$ and $k_i:=\min\{m,\lceil \ell_{s+1-i}\rceil\}$,
$i=2,\ldots,s+1$ (note that $k_2=m$, $k_{s+1}=1$). Using $h(\lceil
z\rceil)\leq 2h(z)$ and construction of $\ell_i$'s, we obtain
(\ref{choiceofseq}) for $i\geq 2$, while for $i=1$ by definition of
$k'$ and since $k\leq k'$ we clearly have $h(k_1)=h(k)\leq 2 g(m)= 2
g(k_2)$. Since $(\ell _i)_i$ is increasing we also observe by
(\ref{ellodin}) that $k_i\geq \frac{1}{6} m^{1/4}$ for $2\leq i\leq
s$. This completes the proof.
\end{proof}

\begin{prop}
\label{est_smallk}
Suppose that $N\geq n$ and  $k\leq \min\{n,k'\}$.
Then for $t\geq 1$,
\begin{align*}
\Pr\Big(\sup_{{I \subset \{1,\ldots,N\}}\atop{|I|=m}}
\sup_{x\in U_k}|P_I\Ga x|  &\geq
Ct\sqrt{\log\log(3m)}\sqrt{m}\log\Big(\frac{eN}{m}\Big)\Big)
\\
&\leq
\exp\Big(-\frac{t\sqrt{\log\log(3m)}\sqrt{m}\log(eN/m)}{\sqrt{\log(em)}}\Big),
\end{align*}
where $C$ is a universal constant.
\end{prop}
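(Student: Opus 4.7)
The plan is to run a chaining argument over $U_k$ based on Lemma~\ref{net_in_Uk}, applied with the sparsity sequence $(k_1,\ldots,k_{s+1})$ produced by Lemma~\ref{choice_of_ki}, and to bound the contribution of each ``level'' via Theorem~\ref{singlex}. First I would construct the net $\mathcal{N}\subset \frac{3}{2}\tilde U_k$ from Lemma~\ref{net_in_Uk}; the successive-approximation property $x-y\in\frac{1}{2}\tilde U_k$ reduces the problem, up to a factor of $2$, to controlling $\sup_{|I|=m,\,y\in\mathcal{N}}|P_I\Gamma y|$. Each $y\in\mathcal{N}$ decomposes as $y=\sum_{i=1}^{s}\pi_i(y)$ with disjoint supports of size at most $k_i$ and $\|\pi_i(y)\|_\infty\leq 2/\sqrt{k_{i+1}}$, so by the triangle inequality it suffices to bound, uniformly in $i$, $\sup_{|I|=m}|P_I\Gamma\pi_i(y)|$ over $\pi_i(y)\in\pi_i(\mathcal{N})$.

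For each level $i$, I would apply Theorem~\ref{singlex}(i) to $\pi_i(y)/2$; this is legitimate because Lemma~\ref{choice_of_ki} guarantees $k_{i+1}\leq m$, so the relevant parameter $b:=1/\sqrt{k_{i+1}}\geq 1/\sqrt{m}$. The resulting tail bound has probability $\exp(-t_i g(k_{i+1}))$ at threshold of order $t_i\sqrt{m}\log(eN/m)$, where $g$ is the function from Lemma~\ref{choice_of_ki}. Invoking the key relation $k_i\log(en/k_i)\leq 20 g(k_{i+1})$, a union bound over $\pi_i(\mathcal{N})$ costs only $\exp(60 g(k_{i+1}))$. Choosing $t_i=C_0+c_0 t\sqrt{\log\log(3m)}\,g(1)/g(k_{i+1})$ with $C_0$ large enough to absorb the union-bound cost makes the per-level probability at most $\exp(-c t\lambda_m)$, matching the target.

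Summing thresholds over the $s\leq C\log\log(3m)$ levels then relies on the key combinatorial estimate $\sum_{i=1}^{s}g(1)/g(k_{i+1})=O(1)$. This should follow from the double-exponential structure of the sequence of Lemma~\ref{choice_of_ki} (essentially $k_{i+1}\asymp m^{1-2^{-(s-i)}}$), which forces $g(k_{i+1})$ to grow geometrically in $i$, so that the sum of reciprocals is dominated by $1/g(1)$. Combining, $\sum_i t_i \lesssim s+t\sqrt{\log\log(3m)}$, and the triangle-inequality aggregation then yields the target threshold $C t\sqrt{\log\log(3m)}\sqrt{m}\log(eN/m)$.

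The main obstacle I anticipate is the delicate balance between the per-level union-bound cost ($\sim g(k_{i+1})$) and the number of levels $s\asymp\log\log(3m)$: each $t_i$ must exceed a universal constant to pay the union-bound cost, so naively $\sum_i t_i \geq \Omega(\log\log(3m))$, which can dominate the target factor $t\sqrt{\log\log(3m)}$ when $t$ is close to $1$. Closing this gap cleanly requires both the geometric decay of the ratios $g(1)/g(k_{i+1})$ along the sequence from Lemma~\ref{choice_of_ki} and a careful choice of the absolute constants in the allocation of the $t_i$'s.
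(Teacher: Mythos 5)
Your overall plan — construct the net of Lemma~\ref{net_in_Uk} with the sparsity sequence from Lemma~\ref{choice_of_ki}, decompose each $y\in\mathcal{N}$ as $\sum_i\pi_i(y)$, apply Theorem~\ref{singlex}(i) level by level, and pay the per-level union bound via the relation $k_i\log(en/k_i)\leq 20g(k_{i+1})$ — is exactly the paper's plan. But the obstacle you flag at the end is a real gap, and it is not closed by the refinements you suggest. Normalizing each block to $\pi_i(y)/2$ throws away the information that the blocks are \emph{collectively} small, and once you have done that, each level's threshold must be at least a fixed constant times $\sqrt{m}\log(eN/m)$ just to absorb $|\pi_i(\mathcal N)|\le\exp(60\,g(k_{i+1}))$. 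Summing the constant parts $C_0$ of your $t_i$'s over $s\asymp\log\log(3m)$ levels yields a total threshold of order $\log\log(3m)\cdot\sqrt{m}\log(eN/m)$, not $\sqrt{\log\log(3m)}\cdot\sqrt{m}\log(eN/m)$. No choice of absolute constants, and no use of the decay of $g(1)/g(k_{i+1})$ (which governs only the variable part of $t_i$, not the constant part), can repair this, since the constant part is forced by the union bound and does not see $g$ at all.

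The paper avoids the factor $s$ by \emph{not} normalizing away the sizes of the blocks. It applies Theorem~\ref{singlex}(i) to $\pi_i(x)/\bigl(\sqrt{k_{i+1}}\|\pi_i(x)\|_\infty+|\pi_i(x)|\bigr)$, so that the per-level threshold naturally scales as
\[
C\bigl(\sqrt{k_{i+1}}\|\pi_i(x)\|_\infty+|\pi_i(x)|\bigr)\sqrt{m}\log\Big(\frac{eN}{m}\Big)\;+\;u_i,
\]
with the union-bound cost absorbed by $\exp(-100\,g(k_{i+1}))$ and the $u_i$ part paying $\exp\bigl(-\sqrt{k_{i+1}}\,u_i/(C\sqrt{\log(em)})\bigr)$. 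The point is that the ``main'' parts of the thresholds are \emph{not} of size $\Omega(1)$ each; by Cauchy--Schwarz, the net properties $\sum_i k_{i+1}\|\pi_i(x)\|_\infty^2\le 4$ and $\sum_i|\pi_i(x)|^2\le 9/4$ (the supports are disjoint and $\pi(x)\in\tfrac32\tilde U_k$) give
\[
\sum_{i=1}^s\bigl(\sqrt{k_{i+1}}\|\pi_i(x)\|_\infty+|\pi_i(x)|\bigr)\le\sqrt{s}\Big(2+\tfrac32\Big)\lesssim\sqrt{\log\log(3m)},
\]
which is the source of the $\sqrt{\log\log(3m)}$ factor. The auxiliary $u_i$'s are then chosen as $u_s=Ct\sqrt{\log\log(3m)}\sqrt{m}\log(eN/m)$ (with $k_{s+1}=1$) and $u_i=u_s/s$ for $i<s$, using the lower bound $k_{i+1}\ge\frac16 m^{1/4}$ for $2\le i+1\le s$ so that the smaller $u_i$ still yields the target probability. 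Your proposal needs to be restructured along these lines: keep the $\pi_i$-dependent normalization so that the thresholds sum by Cauchy--Schwarz rather than by counting levels.
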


\begin{proof}
Let $k_1,\ldots,k_{s+1}$ be given by Lemma~\ref{choice_of_ki} and ${\cal
  N}\subset \frac{3}{2}U_k$ be
as in Lemma \ref{net_in_Uk}. Notice that
\[
  \sup_{{I \subset \{1,\ldots,N\}}\atop{|I|=m}} \sup_{x\in U_k}|P_I\Ga x| \leq 
 2\sup_{{I \subset \{1,\ldots,N\}}\atop{|I|=m}} \sup_{x\in{\cal N}}|P_I\Ga x|,
\]
so we will estimate the latter quantity.

Let us fix $x\in{\cal N}$ and $1\leq i\leq s$. We apply Theorem
\ref{singlex} to the vector
$y=\pi_i(x)/(\sqrt{k_{i+1}}\|\pi_i(x)\|_{\infty}+|\pi_i(x)|)$
(observe that $|y|\leq 1$ and $\|y\|_{\infty}\leq
1/\sqrt{k_{i+1}}$ and on the other hand 
$1/\sqrt{k_{i+1}}
\geq \frac{1}{\sqrt{m}}$) to get for $u>0$,
\begin{align*}
\Pr\Big(\sup_{{I \subset \{1,\ldots,N\}}\atop{|I|=m}}
|P_I\Ga \pi_i(x)|\geq
C(&\sqrt{k_{i+1}}\|\pi_i(x)\|_{\infty}+|\pi_i(x)|)\sqrt{m}
\log\Big(\frac{eN}{m}\Big)+u\Big)
\\
&\leq
\exp\Big(-100g(k_{i+1})\Big)
\exp\Big(-\frac{\sqrt{k_{i+1}}u}{C\sqrt{\log(em)}}\Big),
\end{align*}
where $g(x)$ is as in Lemma \ref{choice_of_ki}.

By the properties of the net ${\cal N}$ guaranteed by 
Lemma~\ref{net_in_Uk} and (\ref{choiceofseq}) 
\[
  |\pi_i({\cal N})|\exp\Big(-100g(k_{i+1})\Big) \leq 1.
\]
Therefore for all $u>0$ and $i=1,\ldots,s$,
\begin{align*}
\Pr\Big(\sup_{x\in {\cal N}}\sup_{{I \subset
    \{1,\ldots,N\}}\atop{|I|=m}}
|P_I\Ga\pi_{i}(x)|\geq
C(\sqrt{k_{i+1}}\|\pi_i(x)\|_{\infty}+&|\pi_i(x)|)\sqrt{m}
\log\Big(\frac{eN}{m}\Big)+u\Big)
\\
&\leq \exp\Big(-\frac{\sqrt{k_{i+1}}u}{C\sqrt{\log(em)}}\Big).
\end{align*}

We have for any $x\in {\cal N}$,
\begin{align*}
\sum_{i=1}^s(\sqrt{k_{i+1}}\|\pi_i(x)\|_{\infty}+|\pi_i(x)|)
&\leq
\sqrt{s}\bigg(
\Big(\sum_{i=1}^s k_{i+1}\|\pi_i(x)\|_{\infty}^2\Big)^{1/2}+
\Big(\sum_{i=1}^s|\pi_i(x)|^2\Big)^{1/2}
\bigg)
\\
&\leq \sqrt{s}\Big(2+\frac{3}{2}\Big)\leq C\sqrt{\log\log (3m)}.
\end{align*}
Therefore for any $u_1,\ldots,u_s>0$,
\begin{align*}
\Pr&\Big(\sup_{x\in {\cal N}}\sup_{{I \subset
    \{1,\ldots,N\}}\atop{|I|=m}}
|P_I\Ga x|\geq
C\sqrt{\log\log(3m)}\sqrt{m}\log\Big(\frac{eN}{m}\Big)+\sum_{i=1}^su_i\Big) 
\\
&\leq \sum_{i=1}^s\Pr\Big(\sup_{x\in {\cal
    N}}\sup_{{I \subset \{1,\ldots,N\}}\atop{|I|=m}}|P_I\Ga\pi_{i}(x)|\geq
C(\sqrt{k_{i+1}}\|\pi_i(x)\|_{\infty}
+|\pi_i(x)|)\sqrt{m}\log\Big(\frac{eN}{m}\Big)+u_i\Big)
\\
&\leq \sum_{i=1}^s \exp\Big(-\frac{\sqrt{k_{i+1}}u_i}{C\sqrt{\log
    (em)}}\Big). 
\end{align*}

Hence it is enough to choose
$u_s=Ct\sqrt{\log\log(3m)}\sqrt{m}\log(eN/m)$ and
$u_i=\frac{1}{s}Ct\sqrt{\log\log(3m)}\sqrt{m}\log(eN/m)$ for
$i=1,\ldots,s-1$ and to use the fact that $k_i\geq \frac{1}{6}m^{1/4}$
for $i=2,\ldots,s$.
\end{proof}

\subsection{Conclusion of the proof of Theorem \ref{est_akm}}

\begin{proof}
  First notice that it is sufficient to consider the case $n\leq N$.
  Indeed, if $n>N$ we may find independent isotropic $n$-dimensional
  log-concave random vectors $\tilde{X_1},\ldots,\tilde{X_n}$ such
  that $X_i=P_{\{1,\ldots,N\}}\tilde{X_i}$ for $1\leq i\leq n$.
Let $\tilde{A}$ be the $n\times n$ matrix with rows
$\tilde{X_1},\ldots,\tilde{X_n}$ and
\[
\tilde{A}_{k,m}:=\sup\{|P_{I}(\tilde{A})^*x|\colon
I\subset\{1,\ldots,n\},\ |I|=m,\ x\in U_k\}.
\]
Then obviously $\tilde{A}_{k,m}\geq A_{k,m}$ and this allows us 
to  immediately deduce the case $N\leq n$  from  the case $N=n$.

\medskip

If $\sqrt{k}\log(en/k)+\sqrt{m}\log(eN/m)\geq k'\log(en/k')$ we may
apply results of \cite{jams}.  Recall that $\Ga= A^*$.
Let $s\ge \sqrt{k}\log(en/k)+\sqrt{m}\log(eN/m)$. Applying ``in particular" 
part of Theorem~3.13 of \cite{jams} and Paouris' Theorem (inequality 
(\ref{paour_dev}) together with the union bound) to the columns of 
$m\times n$ matrix $P_I \Ga$ and adjusting corresponding constants, 
 we obtain that 
\[
 \Pr\big(\sup_{x\in U_k}|P_I\Ga x|\geq Cs\big)\leq \exp(-2s) 
\]
for any $I\subset \{1,\ldots, N\}$ with $|I|=m$ (cf. Theorem~3.6 of
\cite{jams}).  Therefore
\[
\Pr(A_{k,m}\geq Cs)\leq \sum_{|I|=m}\Pr\big(\sup_{x\in
  U_k}|P_I \Ga x|\geq Cs\big)\leq 
\binom{N}{m}\exp(-2s).
\]
By the definition of $k'$ we get
\[
\binom{N}{m}\leq \exp\big(m\log(eN/m)\big)
    \leq \exp\big(k'\log(en/k')\big),
\]
hence for $s$ as above
\[
\Pr(A_{k,m}\geq Cs)\leq \exp\big(k'\log(en/k')\big)\exp(-2s)
\leq \exp(-s)
\]
and Theorem \ref{est_akm} follows in this case.

Finally assume that  $n\leq N$ and  that
$\sqrt{k}\log(en/k)+\sqrt{m}\log(eN/m)\leq k'\log(en/k')$. 
For simplicity put  $a_k = \sqrt{k}\log(en/k)$, 
$b_m=\sqrt{m}\log(eN/m)$, $d_m= \sqrt{\log \log (3m)}$. 
If $k\leq k'$ then Theorem~\ref{est_akm} follows by 
Proposition~\ref{est_smallk} applied with 
$t _0 =t(1 + a_k/(d_m a_m))$. If $k\geq k'$ then 
we apply Proposition~\ref{est_smallk} (with the same $t_0$) and 
Propositions~\ref{step-1} with $t_1= t (b_m d_m + a_k)/(a_k +b_m)$ 
to obtain Theorem~\ref{est_akm}
(note that $C\sqrt{\frac{m}{\log(em)}}\log(eN/m)
\ge \log N\geq \log n$, so the factor $n$ in the probability in
Propositions~\ref{step-1} can be eliminated).
\end{proof}

\section{The Restricted Isometry  Property}
\label{RIP}

Fix integers $ n$ and $ N \ge 1$ and let $ A $ be an $n\times N$
matrix. Consider the problem of reconstructing any vector $x\in \R^N$
with short support (sparse vectors) from the data $ A 
x\in\mathbb{R}^n$, with a fast algorithm.

Compressive Sensing provides a way of reconstructing the original
signal $x$ from its compression $ A  x$ with $n\ll N$ by the
so-called $\ell_1$-minimization method (see \cite{D1, CRT,CT2}).

Let
$$
   \delta_m=\delta_m( A )=\sup_{x\in U_m} \left| {| A 
   x|^2}-\E | A  x|^2 \right |
$$
be the Restricted Isometry Constant (RIC) of order $m$,
introduced in \cite{CT1}. Its  important  feature
  is that if $\delta_{2m}$ is appropriately small then
every $m$-sparse vector $x$ can be reconstructed from its
compression $ A  x$ by the $\ell_1$-minimization method.
The goal is to check this property for certain models of matrices.

The articles \cite{OGproc,OGcras,jams,ALPT1,cras_alpt} considered
random matrices with independent {\em columns}, and investigated the
RIP for various models of matrices, including the log-concave Ensemble
build with independent isotropic log-concave columns.  In this
setting, the quantity $ A_{n,m}$ played a central role.

In this section we consider $n \times N$ random matrices $ A $ with
independent rows $(X_i)$.  For $T\subset\mathbb{R}^N$ the quantity
$ A_k(T)$ has been defined in (\ref{akT}) and
$ A_{k,m}= A_k(U_m)$ was estimated in the previous section.

We start with a general Lemma~\ref{dva} which 
will be used to show  that after a  
suitable discretization, 
one  can  reduce  a concentration inequality to a
deviation inequality; in particular, 
checking the RIP is reduced to estimating
$ A_{k,m}$.  It  is a slight strengthening of
Lemma~\ref{dva-intro} from the introduction.

\begin{lemma}
\label{dva}
Let $X_1, \ldots, X_n$ be independent isotropic random vectors in
$\R^N$. Let $T \subset S^{N-1}$ be a finite set.
  Let $0<\theta < 1$ and $B\geq 1$.
Then with probability at least 
$1-   |T| \exp\left( - {3 \theta ^2 n}/{8 B^2} \right)$ 
one has 
\begin{align*}
\sup_{y\in T} & \left|\frac{1}{n} 
 \sum_{i=1}^n(|\langle X_i, y\rangle|^2 - \E |\langle X_i, y\rangle|^2) 
 \right|  \\
  &\leq \theta + \frac{1 }{n} \left(  A_{k}(T) ^2 + 
  \sup _{y\in  T}\E \sum _{i=1}^n |\langle X_i, y\rangle|^2 
   \ind{|\langle X_i, y\rangle| \ge B} \right) \\
  &\leq
   \theta + \frac{1}{n}\left( A_{k}(T)^2 + \mathbb{E} 
 A_{k}(T)^2\right),   
\end{align*}
where $k\leq n$ is the largest  integer satisfying
$k\leq ( A_{k}(T)/B)^2$.
\end{lemma}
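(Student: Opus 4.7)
The plan is Bourgain's truncation argument at level $B$. For each fixed $y\in T$ and each $i\le n$, decompose $|\la X_i,y\ra|^2=f_i(y)+g_i(y)$ with $f_i(y)=|\la X_i,y\ra|^2\Ind{|\la X_i,y\ra|\le B}$ (bounded, taking values in $[0,B^2]$) and $g_i(y)=|\la X_i,y\ra|^2\Ind{|\la X_i,y\ra|>B}$ (the heavy tail). The triangle inequality bounds the deviation $\bigl|\tfrac{1}{n}\sum_i(|\la X_i,y\ra|^2-\Ex|\la X_i,y\ra|^2)\bigr|$ by the sum of the corresponding contributions coming from the $f_i$'s and the $g_i$'s, and these two pieces are controlled by completely different tools.

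For the bounded part, isotropy of $X_i$ and $|y|=1$ give $\mathrm{Var}(f_i(y))\le \Ex f_i(y)^2\le B^2\,\Ex f_i(y)\le B^2$. Applying the classical Bernstein inequality to the independent centered sum $\sum_i(f_i(y)-\Ex f_i(y))$, with variance proxy $B^2$ per summand and range $B^2$, yields for every $0<\theta<1$
$$
 \p\Bigl(\Bigl|\tfrac{1}{n}\textstyle\sum_i(f_i(y)-\Ex f_i(y))\Bigr|>\theta\Bigr)\le \exp\bigl(-3n\theta^2/(8B^2)\bigr),
$$
where the constant $3/8$ comes from using $\theta<1$ to absorb the sub-exponential correction in Bernstein. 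A union bound over the finite set $T$ then produces precisely the failure probability $|T|\exp(-3\theta^2 n/(8B^2))$ stated in the lemma.

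For the tail part, non-negativity of $g_i(y)$ gives $\bigl|\tfrac{1}{n}\sum_i(g_i(y)-\Ex g_i(y))\bigr|\le \tfrac{1}{n}\sum_i g_i(y)+\tfrac{1}{n}\Ex\sum_i g_i(y)$. For the empirical piece, fix the realization of the $X_i$'s and $y\in T$, and put $E(y)=\{i\le n:|\la X_i,y\ra|>B\}$, $k(y)=|E(y)|$. The defining condition of $E(y)$ implies
$$
 B^2 k(y)\le \sum_{i\in E(y)}|\la X_i,y\ra|^2\le A_{k(y)}(T)^2,
$$
so $k(y)\le (A_{k(y)}(T)/B)^2$. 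Since $A_k(T)$ is non-decreasing in $k$ and $k$ is by definition the largest integer with $k\le(A_k(T)/B)^2$, this forces $k(y)\le k$, and hence $\sum_i g_i(y)\le A_{k(y)}(T)^2\le A_k(T)^2$. Passing to the supremum over $y\in T$ gives the $\tfrac{1}{n}A_k(T)^2$ term. The finer expectation bound $\tfrac{1}{n}\sup_{y\in T}\Ex\sum_i g_i(y)$ is then immediate, and the coarser $\tfrac{1}{n}\Ex A_k(T)^2$ follows by applying the same pointwise argument inside the expectation.

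The main point to watch is that $k$ is itself a random variable (determined by the random quantities $A_j(T)$), so the pathwise bound $\sum_i g_i(y)\le A_k(T)^2$ holds on the whole probability space and in particular survives intersection with the Bernstein event. Combining this with the deviation estimate on the bounded piece yields the two-sided bound claimed in the lemma; the only genuine bookkeeping obstacle is extracting the exact constant $3/(8B^2)$ from Bernstein's inequality, which is routine given $\theta<1$ and the range/variance bound $B^2$.
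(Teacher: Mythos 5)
Your proposal is correct and follows essentially the same route as the paper: a Bernstein bound on the truncated piece with range and variance proxy both $B^{2}$ yielding the $\exp(-3\theta^{2}n/(8B^{2}))$ factor after a union bound over $T$, and Bourgain's truncation argument for the heavy-tail piece, including the key observation that the pathwise inequality $k(y)\le k$ makes the bound $\sum_{i}g_{i}(y)\le A_{k}(T)^{2}$ hold on the whole probability space despite $k$ being random. The only (immaterial) difference is cosmetic: the paper truncates via $(|\langle X_{i},y\rangle|\wedge B)^{2}$ while you truncate via $|\langle X_{i},y\rangle|^{2}\Ind{\{|\langle X_{i},y\rangle|\le B\}}$; both give the same variance/range bound $B^{2}$ and the same tail term.
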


\medskip

\noindent{\bf Remark. } Note that $k$ in Lemma~\ref{dva} 
is a random variable. 

\bigskip

To prove Lemma~\ref{dva} we need 
Bernstein's inequality (see e.g., Lemma~2.2.9 in \cite{vVW}). 

\begin{prop} 
\label{Bernstein}
Let $Z_i$ be independent centered random variables such that
$|Z_i|\le a$ for all $1\le i\le n$.
Then for all $\tau \ge 0$ one has 
\begin{displaymath}
 \PP\left(\frac{1}{n} \sum_{i=1}^n Z_i \ge \tau\right) 
 \le \exp\left(-\frac{\tau^2 n}{2(\sigma^2 + a\tau/3)}\right),
\end{displaymath}
where
\begin{displaymath}
\sigma^2 = \frac{1}{n} \sum_{i=1}^n \mbox{\rm Var}(Z_i).
\end{displaymath}
\end{prop}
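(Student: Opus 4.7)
The plan is to follow the standard Chernoff--Cramér route: convert the tail probability into a moment generating function estimate via the exponential Markov inequality, control each factor using the variance together with the boundedness hypothesis, and then optimize over the free parameter. Concretely, for $\lambda>0$, independence gives
\[
\PP\Big(\sum_{i=1}^n Z_i \ge n\tau\Big) \le e^{-\lambda n\tau}\prod_{i=1}^n \Ex e^{\lambda Z_i},
\]
so everything reduces to bounding $\Ex e^{\lambda Z_i}$ for a single centered bounded variable.

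The key one-variable estimate is the elementary inequality
\[
e^x - 1 - x \le \frac{x^2/2}{1-|x|/3}\quad\text{for } |x|<3,
\]
which I would verify by writing $e^x-1-x=\sum_{k\ge 2} x^k/k!$ and bounding termwise against the geometric series $\frac{x^2}{2}\sum_{k\ge 0}(|x|/3)^k$, using $k!\ge 2\cdot 3^{k-2}$ for $k\ge 2$. Applied pointwise with $x=\lambda Z_i$ (valid as long as $\lambda a<3$) and combined with $\Ex Z_i=0$, this yields
\[
\Ex e^{\lambda Z_i} \le 1 + \frac{\lambda^2 \Var(Z_i)}{2(1-\lambda a/3)} \le \exp\!\left(\frac{\lambda^2 \Var(Z_i)}{2(1-\lambda a/3)}\right).
\]
Multiplying over $i$, the Chernoff bound becomes
\[
\PP\Big(\frac{1}{n}\sum_{i=1}^n Z_i \ge \tau\Big) \le \exp\!\left(-\lambda n\tau + \frac{\lambda^2 n\sigma^2}{2(1-\lambda a/3)}\right).
\]

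The final step is to choose $\lambda$ to minimize (or nearly minimize) the exponent. The natural choice that makes the algebra collapse is $\lambda = \tau/(\sigma^2 + a\tau/3)$, which is admissible since $\lambda a/3 = (a\tau/3)/(\sigma^2+a\tau/3)<1$. With this choice, $1-\lambda a/3 = \sigma^2/(\sigma^2+a\tau/3)$, so $\lambda^2\sigma^2/(2(1-\lambda a/3)) = \tau^2/(2(\sigma^2+a\tau/3))$ and $\lambda\tau = \tau^2/(\sigma^2+a\tau/3)$; the difference is exactly $\tau^2/(2(\sigma^2+a\tau/3))$, giving the claimed bound.

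There is no real obstacle here beyond verifying the numerical inequality $e^x-1-x\le (x^2/2)/(1-|x|/3)$ and carrying out the optimization; the argument is entirely classical. One small wrinkle is to handle the boundary case $\tau=0$ (where the bound is trivial) and to ensure admissibility of $\lambda$ uniformly, but both are immediate from the formulas above.
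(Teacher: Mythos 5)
Your proof is correct. Note that the paper itself does not prove Proposition \ref{Bernstein} at all; it simply cites Lemma~2.2.9 of van der Vaart--Wellner, so there is no ``paper proof'' to compare against. What you have written is the standard Chernoff--Cram\'er derivation of Bernstein's inequality: the termwise bound $k!\ge 2\cdot 3^{k-2}$ gives $e^x-1-x\le \frac{x^2/2}{1-|x|/3}$ for $|x|<3$, the centering kills the linear term, $1+u\le e^u$ packages the moment generating function bound, and your choice $\lambda=\tau/(\sigma^2+a\tau/3)$ makes the exponent collapse exactly to $-\tau^2 n/\bigl(2(\sigma^2+a\tau/3)\bigr)$; I checked the algebra and it is right. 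The only point worth making explicit is the degenerate case $\sigma^2=0$ (with $\tau>0$), where your $\lambda$ hits the boundary $\lambda a=3$; but then every $Z_i$ vanishes almost surely and the left-hand side is $0$, so the claim is trivial there, just as it is for $\tau=0$.
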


\noindent
{\it Proof of Lemma~\ref{dva}. } 
For $y \in T$ let
$$
  S(y) = \left|\frac{1}{n} \sum_{i=1}^n \left( |\langle
  X_i, y\rangle|^2  - \E |\langle X_i, y\rangle|^2  \r) \r|  
$$
and observe  that 
\begin{eqnarray*}
S(y) &\le&  
 \Big|\frac{1}{n} 
  \sum_{i=1}^n \left( \left(|\langle X_i, y\rangle|\wedge B \r)^2  - 
  \E \left(|\langle X_i, y\rangle|\wedge B \r)^2  \r) \Big| \\
  & + & \frac{1}{n} \sum_{i=1}^n \left( |\langle
  X_i, y\rangle|^2  - B^2 \r) \ind{|\langle X_i, y\rangle|\ge B} \\
  &+&
\frac{1}{n}\E \sum_{i=1}^n \left( |\langle 
  X_i, y\rangle|^2  - B^2 \r) \ind{|\langle X_i, y\rangle|\ge B} . 
\end{eqnarray*}
We denote  the  three summands by  $S_1(y)$, $S_2(y)$, 
$S_3(y)$, respectively,  and we estimate  each of  them separately.

\bigskip

\noindent
{\bf Estimate for $S_1(y)$:\ } 
We will use Bernstein's inequality (Proposition~\ref{Bernstein}). Given 
$y\in T$ let $Z_i (y) = \left(|\langle X_i, y\rangle|\wedge B \r)^2  - 
\E \left(|\langle X_i, y\rangle|\wedge B \r)^2$, for $i\leq n$.
Then $|Z_i(y)|\leq B^2$, so $a = B^2$. By isotropicity of $X$ for every 
$i\leq n$ one has 
$$
  \mbox{ Var}(Z_i(y))\leq \E\left( |\langle X_i, y\rangle| \wedge B\r)^4 \leq 
  \E\left( |\langle X_i, y\rangle| ^2  B^2\r) = B^2,  
$$
which implies $\sigma ^2\leq B^2$. 
By Proposition~\ref{Bernstein}, 
$$
 \PP \left( \frac{1}{n} \sum_{i=1}^n Z_i(y) \geq \theta  \r) \leq 
 \exp\left(-\frac{\theta^2 n}{2(B^2 + B^2 \theta/3) }\r) \leq 
 \exp\left(-\frac{3\theta^2 n}{8 B^2 }\r) .
$$
Then, by the union bound,
$$
  \PP\left( \sup _{y\in {T}} S_1 (y) \geq \theta \r)   = 
 \PP \left( \sup_{y\in {T}} \frac{1}{n} \ \sum_{i=1}^n Z_i(y) \geq 
 \theta  \r)  \leq 
 |T| \exp\left( - \frac{3\theta^2 n}{8 B^2} \r) .
$$

\medskip

\noindent
{\bf Estimates for $S_2(y)$ and $S_3(y)$:\ }  
For every $y\in {T}$ consider
$$
  E_B(y) = \{ i\le n \colon |\langle X_i, y\rangle|\ge B\},
$$
and let 
$$
   k'= \sup _{y\in  {T}} |E_B (y)| .
$$
Then, by the definition of $ A_{k'} (T)$, 
$$
  B^2 k' = B^2 \sup_{y\in  {T}} |E_B (y)| \leq \sup_{y\in  {T}} 
  \sum_{i \in E_B(y)}| \langle X_i, y\rangle|^2  \leq  A ^2_{k'} (T). 
$$
This yields
$$
   k'\leq \frac{ A ^2_{k'} ( {T})}{B^2} ,
$$
and therefore $k' \le k$,
where $k\leq n$ is the biggest integer satisfying 
$k\leq ( A_{k}( {T})/B)^2$.

Using the definition of $ A_{k} (T)$ again we observe 
\begin{eqnarray*}
   \sup _{y\in  {T}} S_2(y) &\le & \frac{1}{n} \ 
   \sup _{y\in  {T}} \sum _{i=1}^n   
   |\langle X_i, y\rangle|^2 \ind{|\langle X_i, y\rangle| \ge B} 
   = \frac{1}{n} \  \sup _{y\in  {T}} \sum _{i\in E_B(y)} 
   |\langle X_i, y\rangle|^2  \\
&\leq & \frac{1}{n} \ 
   \sup _{y\in  {T}} \sup _{|E| \leq k} \sum _{i\in E}   
   |\langle X_i, y\rangle|^2   \leq \frac{1}{n} \  A ^2_{k} ( {T}).
\end{eqnarray*}

 Similarly, 
$$
   \sup _{y\in  {T}} S_3(y) \le \frac{1}{n} \ 
   \sup _{y\in  {T}}\E \sum _{i=1}^n |\langle X_i, y\rangle|^2 
   \ind{|\langle X_i, y\rangle| \ge B}\leq \frac{1}{n} \ \E  A ^2_{k} ( {T}) .
$$ 
Combining estimates for $S_1(y)$, $S_2(y)$, $S_3(y)$ 
we obtain the desired result. 
\qed

\bigskip

By an approximation argument Lemma \ref{dva} has the following 
immediate consequence (cf., \cite{cras_alpt}). 

\begin{cor}  
\label{raz} 
  Let $0<\theta < 1$ and $B\geq 1$. Let $n$, $N$ be positive integers
  and $ A $ be an $n\times N$ matrix, whose rows are independent
  isotropic random vectors $X_i$, for $i\leq n$.
  Assume that  $m\leq N$ satisfies  
$$ 
   m \log \frac{11 e N}{ m} \leq \frac{3 \theta ^2 n}{16 B^2} . 
$$
Then with probability at least 
$$
   1-  \exp\left( -  \frac{3 \theta ^2 n}{16 B^2} \r) 
$$
one has 
\begin{align*}
  \delta_m\left(\frac{ A }{\sqrt n}\right)
  &=\sup_{y\in U_m} \left|\frac{1}{n} \sum_{i=1}^n(
  |\langle X_i, y\rangle|^2 - \E |\langle X_i, y\rangle|^2) \right| \\
  &\leq 2 \theta + \frac{2 }{n} \left(  A_{k,m} ^2 + 
  \sup _{y\in  {U_m}}\E \sum _{i=1}^n |\langle X_i, y\rangle|^2 
   \ind{|\langle X_i, y\rangle| \ge B} \right) \\
  &\leq
  2  \theta + \frac{2}{n}\left( A_{k,m}^2 
+ \mathbb{E}  A_{k,m}^2\right),   
\end{align*}
where $k\leq n$ is the largest integer satisfying $k\leq ( A_{k,m}/B)^2$. 
\end{cor}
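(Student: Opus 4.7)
The plan is to deduce Corollary \ref{raz} from Lemma \ref{dva} via a standard net approximation over the union of spheres of $m$-dimensional coordinate subspaces of $\R^N$.

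First I would set $\eps = 1/4$ and, for each $I \subset \{1,\ldots,N\}$ with $|I| = m$, pick a $(1/4)$-net $T_I$ of the unit sphere $S^{N-1}\cap \R^I$ of cardinality at most $9^m$ (by the volumetric bound recalled in Section \ref{notprel}). Setting $T := \bigcup_I T_I \subset U_m$ gives
$$
|T| \le \binom{N}{m}\, 9^m \le \Big(\frac{9eN}{m}\Big)^m \le \exp\Big(m\log\frac{11eN}{m}\Big).
$$
Next I would pass from $T$ to $U_m$ by the standard quadratic-form approximation: for any symmetric operator $M$ on $\R^I$ one has $\sup_{y\in S^{N-1}\cap \R^I}|\langle My, y\rangle|\le (1-2\eps)^{-1}\sup_{y\in T_I}|\langle My, y\rangle|$, which with $\eps = 1/4$ yields a factor of $2$. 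Applying this to the restriction to $\R^I$ of the operator $\frac{1}{n}A^*A - \id$ and then taking the maximum over $I$ of cardinality $m$ gives
$$
\delta_m\Big(\frac{A}{\sqrt n}\Big) \le 2\sup_{y \in T}\Big|\frac{1}{n}\sum_{i=1}^n \big(|\langle X_i,y\rangle|^2 - \E|\langle X_i,y\rangle|^2\big)\Big|.
$$

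Then I would apply Lemma \ref{dva} to the finite set $T \subset S^{N-1}$. By the cardinality bound above and the hypothesis $m\log(11eN/m) \le 3\theta^2 n/(16 B^2)$, the failure probability satisfies
$$
|T|\exp\Big(-\frac{3\theta^2 n}{8B^2}\Big) \le \exp\Big(m\log\frac{11eN}{m} - \frac{3\theta^2 n}{8B^2}\Big) \le \exp\Big(-\frac{3\theta^2 n}{16 B^2}\Big).
$$
On the complementary event, Lemma \ref{dva} yields the bound in terms of $A_{k'}(T)$ and $\sup_{y\in T}\E\sum_i |\langle X_i,y\rangle|^2\ind{|\langle X_i,y\rangle|\ge B}$, where $k'$ is the largest integer with $k'\le (A_{k'}(T)/B)^2$.

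The final step is to trade $A_{k'}(T)$ for $A_{k,m}$; this is the only real bookkeeping, but it is straightforward and is the place where one must be most careful. Since $T \subset U_m$ we have $A_j(T) \le A_{j,m}$ for every $j$, and both functions are non-decreasing in $j$. Hence the condition $j \le (A_{j,m}/B)^2$ defining the random index $k$ in the corollary is weaker than the condition $j \le (A_j(T)/B)^2$ defining $k'$, which forces $k' \le k$ on every sample. Therefore $A_{k'}(T)^2 \le A_{k',m}^2 \le A_{k,m}^2$ almost surely, and similarly $\sup_{y\in T}\E\sum_i|\langle X_i,y\rangle|^2\ind{|\langle X_i,y\rangle|\ge B} \le \sup_{y\in U_m}\E\sum_i|\langle X_i,y\rangle|^2\ind{|\langle X_i,y\rangle|\ge B}$, which after combining with the factor $2$ from the net approximation yields the two claimed inequalities.
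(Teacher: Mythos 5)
Your proof is correct and follows essentially the same route as the paper: discretize each $m$-dimensional coordinate sphere, apply Lemma~\ref{dva} to the resulting finite set $T$, upgrade from $T$ to $U_m$ via the standard quadratic-form net lemma, and pass from $A_{k'}(T)$ to $A_{k,m}$ using $T\subset U_m$ and monotonicity. The paper uses a $1/5$-net and an explicit $w=x+z$ decomposition (yielding the constant $25/14\le 2$) where you use a $1/4$-net with the $(1-2\eps)^{-1}$ bound, and you spell out the pointwise comparison $k'\le k$ that the paper dismisses as ``an obvious fact''; these are cosmetic differences only.
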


\medskip

\noindent{\bf Remarks. 1. } Note that as in Lemma~\ref{dva}, 
$k$ in Corollary \ref{raz} is a random variable. \\
{\bf 2.} 
In all our applications we would like to have $ A_{k,m}^2$ and 
$\E  A_{k,m}^2$ of order $\theta n$. To obtain this, we choose 
the parameter $B$ appropriately. \\ 
{\bf 3.} Note that $ A_{k,m}$ is increasing in $k$, therefore 
we immediately have that if $m\leq N$ satisfies  
$$ 
   m \log \frac{11 e N}{\theta m} \leq \frac{3 \theta ^2 n}{16 B^2} 
  \quad \mbox{ and } \quad   \E   A_{n,m}^2 \leq \theta n  
$$
then with probability at least 
$$
  1 - \exp\left( -  \frac{3 \theta ^2 n}{16 B^2} \r) - 
  \PP\left(  A_{n,m}^2 > \theta n\r)
$$
one has  
\begin{equation} \label{eq:raz}
 \delta_m\left(\frac{ A }{\sqrt n}\right)
 \leq 6  \theta .
\end{equation}

\medskip

\noindent
{\it Proof of Corollary \ref{raz}. } 
Let $\cal{N}$ be a $1/5$-net in $U_m$ of cardinality 
${N \choose m} 11^m \leq (11 e N / m)^m$ (we can construct $\cal{N}$ in 
such a way that for every $y\in U_m$ there exists $z_y\in {\cal{N}}$ 
with such that $z_y/|z_y| \in U_m$ and $|y-z_y|\leq 1/5$). 
By the assumption on $m$,
$$
  m \log  \frac{11 e N}{ m} \leq \frac{3 \theta ^2 n}{16 B^2},
$$
and thus
$$
|{\cal N}| \exp\left( -  \frac{3 \theta ^2 n}{8 B^2} \r)
\le  \exp\left( -  \frac{3 \theta ^2 n}{16 B^2} \r).
$$
Using this and an obvious fact that $ A_k ({\cal N})
\le  A_k (U_m)$ for all $k$, we get by  Lemma \ref{dva} that 
$$
 \sup_{z\in {\cal N}} \left| \frac{1}{n} \sum_{i=1}^n(|\langle
 X_i, z\rangle|^2 - \E |\langle X_i, z\rangle|^2) \right|
 \leq  \theta + \frac{1}{n} \left( A_{k,m}^2 + 
\E  A_{k,m}^2 \right),
$$
with probability larger than or equal to $1 - \exp\left( -  \frac{3
\theta ^2 n}{16 B^2} \r)$. 

The proof is now finished by an approximation argument.  Note that
there exists a self-adjoint operator $S$ acting on the Euclidean
space $\R^N$ such that
$$
\frac{1}{n} \sum_{i=1}^n(|\langle
 X_i, z\rangle|^2 - \E |\langle X_i, z\rangle|^2)
= \langle Sz, z\rangle
$$
for all $z \in \R^N$. Now pick $w \in U_m$ such that
$$ 
|\langle Sw, w\rangle| = \sup_{y \in U_m}
|\langle Sy, y\rangle|,
$$ 
and let $I $  with $|I|=m$ contain the support of $w$. Write
$w = x +z$ where $x \in (1/5) B_2^N$ and $z \in {\cal N}$
and $x$ and $z$ are supported by $I$.  Then
\begin{eqnarray*}
  | \langle S w, w\rangle | & = &
| \langle S(x+z), (x+z)\rangle | \\
&\le & | \langle S x, x\rangle |
+ | \langle S x, z\rangle |
+ | \langle S z, x\rangle |
+ | \langle S z, z\rangle |\\
& \le & (1/25) \sup_{x \in B_2^I}
|\langle S x, x\rangle |
+ (2/5)  
\sup_{x \in B_2^I}
|\langle S x, x\rangle | 
\sup_{z\in {\cal N}} |z|
+ \sup_{z\in {\cal N}}
|\langle S z, z\rangle |.
\end{eqnarray*}
Thus
$$
\sup_{y \in U_m}
|\langle Sy, y\rangle| \le 
(25/14) \sup_{z \in {\cal N}} |\langle Sz, z\rangle|.
$$
completing the proof.
\qed

\vspace{2ex}

The following theorem  is a more general version of
Theorem  \ref{RIP-intro-thm} stated in the introduction.

\begin{thm}
\label{rip}  
Let $n$, $N$ be integers and $0<\theta < 1$. 
Let $ A $ be an $n\times N$ matrix, whose rows are independent 
isotropic log-concave random vectors $X_i$, $i\leq n$. There exists 
an absolute constant $c>0$, 
such that if $m\leq N$ satisfies  
$$
  m  \ \log\log 3m \left(\log \frac{3\max\{N, n\}}{m}\r)^2 \leq 
  \frac{c\ \theta ^2\ n}{\log (3/\theta)}  
$$
then 
$$
  \delta _m ( A /\sqrt n) \leq \theta
$$ 
with overwhelming probability.
\end{thm}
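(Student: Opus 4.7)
The plan is to combine Corollary \ref{raz}, which reduces the control of $\delta_m(A/\sqrt n)$ to bounding $A_{K,m}^2+\mathbb{E}\,A_{K,m}^2$ for the random integer $K$ defined there, with the uniform deviation estimate for the family $(A_{k,m})_{k\le n}$ provided by Theorem \ref{est_akm}. I apply Corollary \ref{raz} with the truncation parameter $B=c_0\log(3/\theta)$, where $c_0$ is a sufficiently large absolute constant fixed in the next step. Writing $\mu:=\sqrt{m\log\log(3m)}\log(e\max\{N,n\}/m)$, the theorem's hypothesis reads $\mu^2\le c\theta^2 n/\log(3/\theta)$ and, provided $c$ is small enough, it implies the Corollary's prerequisite $m\log(11eN/m)\le 3\theta^2 n/(16B^2)$. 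Thus, with probability at least $1-\exp(-c_1\theta^2 n/\log^2(3/\theta))$ the Corollary yields
\[
\delta_m(A/\sqrt n)\le 2\theta+\frac{2}{n}\bigl(A_{K,m}^2+\mathbb{E}\,A_{K,m}^2\bigr),
\]
with $K$ the largest integer satisfying $K\le(A_{K,m}/B)^2$.

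Next, inequality \eqref{theomakm} of Theorem \ref{est_akm} applied with $t=1$ gives $A_{k,m}\le C_1\lambda_{k,m}$ for every $k\le n$ with probability at least $1-\exp(-\lambda_m)$, where $\lambda_{k,m}=\mu+\sqrt k\log(en/k)$. On the intersection of the two good events, combining $KB^2\le A_{K,m}^2$ with $A_{K,m}\le C_1\lambda_{K,m}$ produces a fixed-point inequality for $K$. Splitting it according to whether $B^2\ge 4C_1^2\log^2(en/K)$ or not shows that either $K\le k_*:=4C_1^2\mu^2/B^2$, whence $A_{K,m}^2\le 4C_1^2\mu^2$; or else $K\le en\exp(-B/C_1)$, in which case the elementary maximum of the map $k\mapsto k\log^2(en/k)$ on that interval yields $A_{K,m}^2\le 2C_1^2\mu^2+2enB^2\exp(-B/C_1)$. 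Choosing $c_0$ large enough in the definition of $B$ forces $enB^2\exp(-B/C_1)\le\theta n$, while the hypothesis gives $\mu^2\le c\theta n$. In both cases $A_{K,m}^2\le C_2\theta n$. A parallel argument, using $A_{K,m}\le A_{n,m}$ on the complement of the good event and integrating the tail of Theorem \ref{est_akm} evaluated at $k=n$, delivers $\mathbb{E}\,A_{K,m}^2\le C_3\theta n$. Substituting into the displayed bound and rescaling $\theta$ by $1/(2+2C_2+2C_3)$ produces $\delta_m(A/\sqrt n)\le\theta$ with the claimed overwhelming probability.

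The main technical obstacle is the simultaneous calibration of $B$ in two opposite directions: small enough that the Corollary's prerequisite $m\log(eN/m)\lesssim\theta^2 n/B^2$ follows from the theorem's hypothesis, yet large enough (of order $\log(3/\theta)$) to ensure that in the entropy range of the fixed point, where the term $\sqrt k\log(en/k)$ rather than $\mu$ dominates $\lambda_{k,m}$, the resulting contribution to $A_{K,m}^2$ remains $O(\theta n)$. This tension is exactly what the factor $\log(3/\theta)$ in the denominator of the hypothesis accommodates, while the $\sqrt{\log\log(3m)}$ inside $\mu$ is inherited directly from Theorem \ref{est_akm}.
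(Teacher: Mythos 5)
Your overall architecture mirrors the paper's: apply Corollary \ref{raz} to reduce to a bound on $A_{K,m}$, invoke (\ref{theomakm}) to get $A_{k,m}\le C_1\lambda_{k,m}$ uniformly, and close a fixed-point inequality for $K$. But the calibration of the truncation level $B$ — which you correctly identify as the crux — is where your argument breaks.

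You set $B=c_0\log(3/\theta)$ with $c_0$ a large absolute constant, and assert that the theorem's hypothesis $\mu^2\le c\theta^2 n/\log(3/\theta)$ implies the Corollary's prerequisite $m\log(11eN/m)\le 3\theta^2 n/(16B^2)$ ``provided $c$ is small enough.'' This is false. The hypothesis gives, roughly, $m\log(eN/m)\lesssim \mu^2/\bigl(\log\log(3m)\log(eN/m)\bigr)\le c\theta^2 n/\bigl(\log(3/\theta)\log\log(3m)\log(eN/m)\bigr)$, whereas the prerequisite with your $B$ demands $m\log(eN/m)\lesssim \theta^2 n/\bigl(c_0^2\log^2(3/\theta)\bigr)$. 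Comparing, you would need $\log\log(3m)\log(eN/m)\gtrsim c\,c_0^2\log(3/\theta)$, which is not a consequence of the hypothesis: for small $m$ the factor $\log\log(3m)$ can be well below $1$ and the inequality fails for moderately small $\theta$. No absolute $c$ can repair this, because the deficit scales like $\log(3/\theta)$. The tension you flag in your last paragraph is real, but your fixed $B$ does not in fact resolve it. The paper takes $B=C_1\log(n/b_m)$ where $b_m=m\log\log(3m)\log^2(3\max\{N,n\}/m)$; this is \emph{adaptive} in $m,N,n$, and the point is that the hypothesis forces $n/b_m$ large (at least $\log(3/\theta)/(c\theta^2)$), so $\log(n/b_m)\gtrsim\log(1/\theta)$ automatically. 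The Corollary's prerequisite with this $B$ is precisely condition (\ref{ripm}), and the extra factor $\log(n/b_m)\le\log(3\max\{N,n\}/m)$ then cancels one of the two logarithms inside $b_m$, which is the step your fixed $B$ cannot reproduce.

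A second, smaller issue: you invoke the last line of Corollary \ref{raz} and therefore must separately bound $\mathbb{E}A_{K,m}^2$, which you dispatch by ``integrating the tail of Theorem \ref{est_akm} at $k=n$.'' That tail gives $\mathbb{E}A_{n,m}^2\lesssim\lambda_{n,m}^2\approx\mu^2+n$, which is not $O(\theta n)$; one would have to intersect with the good event and control $\mathbb{E}[A_{n,m}^2\mathbf{1}_{\text{bad}}]$ carefully, and this is not immediate because the bad-event probability $\exp(-\lambda_m)$ need not be small for small $m$. The paper sidesteps this by using the \emph{intermediate} bound in Corollary \ref{raz}, replacing $\mathbb{E}A_{K,m}^2$ by $\sup_{y\in U_m}\mathbb{E}\sum_i|\langle X_i,y\rangle|^2\mathbf{1}_{|\langle X_i,y\rangle|\ge B}$, which by a one-line H\"older/log-concavity estimate is at most $nC_2\exp(-B/C_2)$ and, with $B=C_1\log(n/b_m)$ and $C_1$ large, is at most $c\theta n$ deterministically. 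You should adopt the paper's choice of $B$ and its direct moment bound; then your Case I/Case II fixed-point analysis of $K$ survives essentially unchanged.
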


\noindent
{\bf Remark. } In fact our proof gives that there is an absolute 
constant $c>0$ such that if 
\begin{equation}\label{ripbm}
   b_m :=  m  \log \log (3m)  \left( \log \frac{3 \max\{N, n\}}{m}\r)^2 
   \le c \theta n 
\end{equation}
and 
\begin{equation}\label{ripm}
  m \log \frac{3 N}{m} \log ^2 \frac{n}{b_m} \le c \theta ^2 n 
\end{equation}
then $ \delta _m ( A /\sqrt n) \leq \theta$ with probability at least 
$$
   1-  \exp\left( - c\ \frac{ \theta ^2 n}{\log ^2 (n/b_m)} \right) - 
     2 \exp{\left(- c\ \frac{\sqrt{\log\log (3m)} \sqrt{m}}{\sqrt{\log (3m)}}\ 
  \log \frac{3 \max\{N, n\}}{m}\right)} .
$$
In particular, denoting $\alpha _n = n/\log \log (3n)$ and 
$C_{\theta} = (\theta /\log (3/\theta))^2$  one can take 
$$
   m \approx \min \left\{ \frac{\theta \alpha _n}{\log ^2 
  (\max\{N, n\} /(\theta \alpha _n))} , \ \frac{C_{\theta}  
  n}{\log (3N / (C_{\theta}  n))}  \right\}
$$
if $N\geq C_{\theta} n$ and 
$$
   m \approx \frac{ \theta  \alpha_n }{\log ^2 (\log \log (3n) / \theta)}
$$
if $N\leq C_{\theta} n$.

\proof
 Clearly it is enough to prove the estimate from the remark. So set 
$b_m$ as in (\ref{ripbm}) 
and assume that $b_m \leq c_1 \theta n$ for small enough $c_1>0$. 
Choose $B= C_1 \log \frac{n}{b_m}$, where $C_1$ is a sufficiently 
large absolute constant.

Let $k$ be as in Corollary~\ref{raz}, i.e. $k\leq n$ is the biggest integer 
satisfying $k\leq ( A_{k,m}/B)^2$. As in Theorem~\ref{est_akm} denote
$$
 \lambda_{m} = \frac{\sqrt{\log\log (3m)} \sqrt{m}}{\sqrt{\log (3m)}}\ 
  \log(e \max\{N, n\}/m)
$$
and 
$$
 \lambda_{k,m} = \sqrt{\log\log (3m)} \sqrt{m}\log(e \max\{N, n\}/m) + 
 \sqrt{k}\log(3 n/k).
$$ 
Applying (\ref{theomakm}) we obtain that there are absolute  constants 
$C_0>0$ and $c_0>0$ such that 
\begin{equation}\label{akmnorm}
     A_{k,m}  \leq C_0 \lambda_{k,m} , 
\end{equation}
with probability at least $1 - \exp{(-c_0 \lambda_{m})}$. 
By H\"older's inequality and the log-concavity assumption we also obtain
that there exists an absolute constant $C_2>0$ such that for every
$y\in {U_m}$ one has
$$
   \E \sum _{i=1}^n |\langle X_i, y\rangle|^2 \ind{|\langle X_i, y\rangle| 
   \ge B} \leq  \sum_{i=1}^n \sup_{x\in S^{n-1}}\|\langle X_i, 
   x\rangle\|_4^2\ \p\left( |\langle X_i, x\rangle| \ge B\right)^{1/2}
$$
$$
  \le n C_2 \exp(-B/C_2) \leq n C_2 (b_m/n)^{C_1/C_2}\leq c_1 \theta n
$$
for large enough $C_1$. 

Below we show that for our choice of $B$, (\ref{akmnorm}) 
implies 
\begin{equation} \label{goodk}
   \sqrt{k}\log(3 n/k)  \leq 
   \sqrt{\log\log (3m)} \sqrt{m}\log(3 \max\{N, n\}/m) =\sqrt{b_m},  
\end{equation}
 which means $ A_{k,m}  \leq 2 C_0 \sqrt{b_m}$.

Note that if $m$ satisfies (\ref{ripm}) then we can apply 
Corollary~\ref{raz} with our choice of $B$. It gives that there 
exists a positive constant $C$ such that 
$$
  \delta _m \left(\frac{ A }{\sqrt{n}}\right) \leq C \theta  
$$
with probability at least 
$$
   1-  \exp\left( -  \frac{3 \theta ^2 n}{16 B^2} \r) - 
     2 \exp{\left(- c_0 \lambda_{m}\r)},
$$
which proves the desired result.

Now we prove that (\ref{akmnorm}) implies (\ref{goodk}). 
Assume it does not hold, i.e. assume that 
$k \log ^2 \frac{3 n}{k}> b_m$. 
Then, by the definition of $k$ and (\ref{akmnorm}) 
we observe that 
$$
  k\leq \frac{ A_{k,m} ^2}{B^2} \leq \frac{C^2_0}{B^2} \lam ^2_{k,m}
  \leq 4 C_0^2 \frac{k}{B^2} \log ^2 \frac{3 n}{k}.  
$$ 
This implies that $B\leq 2 C_0 \log \frac{3 n}{k}$, which yields 
$$
    k\leq  \frac{3 n}{ \exp( B/ (2C_0) )  }. 
$$ 
Thus we obtain
\begin{align*}
  b_m &<  k \log ^2 \frac{3 n}{k} \leq 
  \frac{3 n}{\exp(B/(2C_0))}\ \frac{B^2}{4C^2_0} 
\\ 
  &= \frac{3 n}{\exp(C_1 \log (n/b_m )/(2 C_0))} \ 
 \frac{C_1^2 \log^2 (n/b_m)}{4  C_0^2} , 
\end{align*}
which is impossible for large enough $C_1$. This proves (\ref{goodk}) 
and hence completes the proof.
\qed

\section{Proofs of results from Section \ref{Section_New_Est}}
\label{proof-section}

\subsection{Proof of Theorem \ref{estN}}

\bigskip

Theorem \ref{estN} is a strengthening of the first technical result in
\cite{La}. The proof given here is a modification of the argument from
\cite{La} and we include the details for the sake of completeness.

First we show the following proposition (an analogue of Proposition~10
from \cite{La}).

\begin{prop}
\label{conditional} There exist an absolute positive constant $C_0$ 
such that the following holds. Let $X$ be an isotropic log-concave
$N$-dimensional random vector, $A=\{X\in K\}$, where $K$ is a convex set
in $\er^N$ satisfying $0<\Pr(A)\leq 1/e$. Then for every $t\geq C_0$ 
\begin{equation}
\label{cond1}
\sum_{i=1}^N\Pr(A\cap\{X(i)\geq t\})\leq 
C_0\Pr(A)\Big(t^{-2}\sigma_X^2(-\log(\Pr(A)))+Ne^{-t/C_0}\Big) 
\end{equation}
and for every $1 \leq u\leq \frac{t}{C_0}$
\begin{equation}
\label{cond2}
\left|\{i\leq N\colon \Pr(A\cap\{X(i)\geq t\})\geq e^{-u}\Pr(A)\}\r|\leq
\frac{C_0 u^2}{t^2}\sigma_X^2(-\log(\Pr(A))).
\end{equation}
\end{prop}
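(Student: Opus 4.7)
Set $p=-\log\Pr(A)\ge 1$, $\sigma=\sigma_X(p)$, and let $Y$ denote a random vector with conditional distribution $X\mid A$; since the density of $Y$ is the product $f_X\Ind{K}/\Pr(A)$ of two log-concave functions, $Y$ is log-concave (though typically not isotropic). My plan is to follow Latała's strategy from \cite{La}, adapted to obtain the sharper $\sigma$-dependence, and to deduce (\ref{cond1}) from (\ref{cond2}) by a straightforward layer-cake integration.

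The hard part will be a second-moment bound for coordinate projections of $Y$: for every $J\subset\{1,\ldots,N\}$ with $|J|=k$,
\[
\E|P_JY|^2 \le C(k+\sigma^2).
\]
I would derive this by H\"older's inequality with exponents $p/2$ and $p/(p-2)$,
\[
\E|P_JY|^2 = \frac{\E[|P_JX|^2\Ind{A}]}{\Pr(A)} \le \frac{(\E|P_JX|^p)^{2/p}\Pr(A)^{1-2/p}}{\Pr(A)} = e^2(\E|P_JX|^p)^{2/p},
\]
combined with Theorem~\ref{imprPaouris} applied to the isotropic log-concave vector $P_JX$, which gives $(\E|P_JX|^p)^{1/p}\le C(\sqrt{k}+\sigma)$; the case $p<2$ is handled with exponent $1$ and $\Pr(A)^{-1}\le e^2$.

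To prove (\ref{cond2}), let $I$ be the index set on its left-hand side. For each $i\in I$ the coordinate $Y(i)$ is log-concave (as a marginal of a log-concave vector) and satisfies $\Pr(Y(i)\ge t)\ge e^{-u}$. Chebyshev's inequality gives $\|Y(i)\|_q\ge te^{-u/q}$ for every $q\ge 1$; combining with the log-concave moment comparison $\|Y(i)\|_q\le Cq\|Y(i)\|_1$ (obtained from Remark~1 after Theorem~\ref{imprPaouris} together with the $L^1$--$L^2$ equivalence for log-concave random variables, handling a possibly nonzero mean via $|\E Y(i)|\le\|Y(i)\|_1$) and optimizing at $q=u$ yields $\|Y(i)\|_1\ge ct/u$, hence $\E Y(i)^2\ge c^2t^2/u^2$. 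Summing over $i\in I$ and comparing with the second-moment bound above produces $|I|(ct^2/u^2-C)\le C\sigma^2$, so choosing $C_0$ large enough that $u\le t/C_0$ forces $ct^2/u^2 \ge 2C$ gives $|I|\le C_0u^2\sigma^2/t^2$.

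Finally, estimate (\ref{cond1}) follows from (\ref{cond2}) by integration. With $f(u)=|\{i\le N\colon \Pr(Y(i)\ge t)\ge e^{-u}\}|$ (non-decreasing in $u$), the layer-cake identity and the substitution $s=e^{-u}$ give
\[
\sum_{i=1}^N\Pr(A\cap\{X(i)\ge t\})=\Pr(A)\int_0^\infty f(u)e^{-u}\,du.
\]
I would split the integral at $u=1$ and $u=t/C_0$: on $[0,1]$ use monotonicity of $f$ and (\ref{cond2}) at $u=1$ to get $f(u)\le C\sigma^2/t^2$; on $[1,t/C_0]$ apply (\ref{cond2}) directly, noting $\int_0^\infty u^2e^{-u}\,du<\infty$; on $[t/C_0,\infty)$ use the trivial bound $f\le N$. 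The first two intervals contribute $O(\sigma^2/t^2)$ and the last contributes $Ne^{-t/C_0}$, yielding (\ref{cond1}) after multiplying by $\Pr(A)$ and adjusting constants. The main obstacle is thus the ``deconditioned'' Paouris bound $\E|P_JY|^2\le C(k+\sigma^2)$; once this is secured, the pointwise information on $Y(i)$ is extracted cleanly from moment comparisons, and the integration step is routine.
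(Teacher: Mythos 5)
Your proof is correct and takes a genuinely different route from the paper's. The paper establishes the key intermediate bound $m(\alpha):=|\{i\colon \E Y(i)^2\ge\alpha\}|\le C\sigma^2/\alpha$ (for $\alpha\ge C$) via a Paley--Zygmund argument: log-concavity of $Y$ gives a constant lower bound on $\Pr(\sum_{i\le m(\alpha)}Y(i)^2\ge\frac12\alpha m(\alpha))$, which transfers to a lower bound on $\Pr(\sum X(i)^2\ge\frac12\alpha m)$ that is then played against the Paouris tail estimate for $P_JX$. You instead obtain the equivalent (and slightly stronger) statement $\E|P_JY|^2\le C(|J|+\sigma^2)$ in one step by H\"older's inequality, exploiting $\Pr(A)^{-2/p}=e^2$ when $p=-\log\Pr(A)$; this cleanly ``deconditions'' the Paouris moment bound without needing Paley--Zygmund. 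The two are morally the same estimate, since $m(\alpha)\alpha\le\E|P_JY|^2$ with $J=\{i\colon\E Y(i)^2\ge\alpha\}$, but yours avoids the two-sided manipulation. For \eqref{cond2}, the paper uses the one-dimensional tail bound $\Pr(Y(i)\ge t)\le\exp(1-t/(C\|Y(i)\|_2))$ directly; you derive the same conclusion $\|Y(i)\|_2\gtrsim t/u$ by Chebyshev plus the log-concave moment growth $\|Y(i)\|_q\le Cq\|Y(i)\|_1$ optimized at $q=u$, which is an equivalent but more moment-flavored argument (and correctly handles the non-centered marginal by splitting off $\E Y(i)$). For \eqref{cond1}, the paper sums over dyadic blocks of $\E Y(i)^2$ while you do a layer-cake integration of $u\mapsto f(u)e^{-u}$ over $[0,1]$, $[1,t/C_0]$, $[t/C_0,\infty)$; these are equivalent in content, and yours is arguably the cleaner presentation. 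In short, your H\"older deconditioning is a nice simplification of the paper's Paley--Zygmund step, and the rest lines up with the paper's strategy in different clothing.
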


\begin{proof}
Let $Y$ be a random vector defined by 
\[
\Pr(Y\in B)=\frac{\Pr(A\cap\{X\in B\})}{\Pr(A)}=\frac{\Pr(X\in B\cap
  K)}{\Pr(X\in K)},
\]
i.e. $Y$ is distributed as $X$ conditioned on $A$. Clearly, for every
measurable set $B$ one has $\Pr(X\in B)\geq \Pr(A)\Pr(Y\in B)$.

It is easy to see that $Y$ is log-concave, but not necessarily isotropic. 
Without loss of generality we assume 
that $\Ex Y(1)^2\geq \Ex Y(2)^2\geq \ldots\geq \Ex Y(N)^2$ 
(otherwise we renumerate coordinates). 

Given $\alpha>0$ denote 
\[
  m=m(\alpha)=\left|\{i\colon \Ex Y(i)^2\geq \alpha\}\r|.
\]
Then $\Ex Y(1)^2\geq\ldots\geq \Ex Y(m)^2\geq \alpha$. 
Using the Paley-Zygmund inequality and log-concavity of $Y$, we get 
\begin{align*}
\Pr\Big(\sum_{i=1}^m Y(i)^2\geq \frac{1}{2}\alpha m\Big)&\geq
\Pr\Big(\sum_{i=1}^m Y(i)^2\geq \frac{1}{2}\Ex\sum_{i=1}^m Y(i)^2\Big)
\\
&\geq
\frac{1}{4}\frac{(\Ex \sum_{i=1}^m Y(i)^2)^2}{\Ex(\sum_{i=1}^m Y(i)^2)^2}\geq 
\frac{1}{C_1}.
\end{align*}
Therefore 
\[
  \Pr\Big(\sum_{i=1}^m X(i)^2\geq \frac{1}{2}\alpha m\Big)\geq 
  \Pr(A)\,  \Pr\Big(\sum_{i=1}^m Y(i)^2\geq \frac{1}{2}\alpha m\Big)\geq 
  \frac{1}{C_1}\, \Pr(A).
\]
Applying Theorem \ref{imprPaouris} (and  Chebyshev's inequality, 
cf. (\ref{paour_dev})) 
to the $m$-dimensional vector $\bar X = (X_1, \ldots, X_m)$ we observe 
\[
\Pr\Big(\sum_{i=1}^m X(i)^2\geq \frac{1}{2}\alpha m\Big)\leq
\exp\Big(-\sigma_{X}^{-1}\Big(\frac{1}{C_3}\sqrt{m\alpha}\Big)\Big)
\quad \mbox{ for }\alpha\geq C_3.
\]
Thus $\exp(-\sigma_X^{-1}(\frac{1}{C_3}\sqrt{m\alpha}))\geq \Pr(A)/C_1$ 
for $\alpha\geq C_3$, so, using the fact that $\sigma_X(tp)\leq 2 t\sigma_X(p)$ 
for $t\geq 1$, we obtain  that
\begin{equation}
\label{estm}
m(\alpha)=\left|\{i\colon \Ex Y(i)^2\geq \alpha\}\r|\leq \frac{C_4}{\alpha}
\sigma_X^2(-\log(\Pr(A)))\quad \mbox{ for }\alpha\geq C_3.
\end{equation}

Note that for every $i$ the random variable $Y(i)$ is log-concave, hence  
\[
\frac{\Pr(A\cap\{X(i)\geq t\})}{\Pr(A)}=
\Pr(Y(i)\geq t)\leq \exp\Big(1-\frac{t}{C_5 (\Ex Y(i)^2)^{1/2}}\Big).
\]
Thus, if $\Pr(Y(i)\geq t)\geq e^{-u}$ then $(\Ex Y(i)^2)^{1/2}\geq
t/(C_5 (u+1))$.  Applying \eqref{estm} with $\alpha =
t^2/(C_5(u+1))^2$ we obtain that \eqref{cond2} holds with constant
$C_6$ provided that $1\leq u\leq t/C_7$.

Now assume that $t\geq \sqrt{C_3}$ and define a nonnegative integer $k_0$ by
$2^{-k_0}t\geq \sqrt{C_3}> 2^{-k_0-1}t$. Let 
\[
I_0=\{i\colon \Ex Y(i)^2\geq t^2\},\quad I_{k_0+1}=\{i\colon \Ex
Y(i)^2<4^{-k_0}t^2\}
\]
and
\[
I_j=\{i\colon 4^{-j}t^2\leq \Ex Y(i)^2<4^{1-j}t^2\}\quad j=1,2,\ldots,k_0.
\]
Clearly $\left|I_{k_0+1}\r|\leq N$ and, by \eqref{estm}, 
\[
\left|I_j\r|\leq C_44^jt^{-2}\sigma_X^2(-\log\Pr(A)) \quad \mbox{for
  }j=0,1,\ldots,k_0.
\]
Observe also that for $j>0$ and $i\in I_j$ one has 
\[
\Pr(Y(i)\geq t)\leq \Pr\Big(\frac{Y(i)}{(\Ex Y(i)^2)^{1/2}}\geq
2^{j-1}\Big)\leq \exp\Big(1-\frac{1}{C_8}2^j\Big).
\]
Therefore 
\begin{align*}
  \sum_{i=1}^{N}\Pr(Y(i)\geq t)&=\sum_{j=0}^{k_0+1}\sum_{i\in
    I_j}\Pr(Y(i)\geq t) \leq
  \left|I_0\r|+e\sum_{j=1}^{k_0+1}\left|I_j\r|\exp\Big(-\frac{2^j}{C_8}\Big)
      \\
      &\leq
      C_4\bigg(t^{-2}\sigma_X^2(-\log\Pr(A))\bigg(1+e\sum_{j=1}^{k_0}4^{j}
      \exp\Big(-\frac{2^j}{C_8}\Big)\bigg)+eNe^{-t/(C \sqrt{C_3})}\bigg)\\
      &\leq C_1\Big(t^{-2}\sigma_X^2(-\log\Pr(A))+Ne^{-t/C_1}\Big).
\end{align*}
By the definition of $Y$, this proves (\ref{cond1}) with constant $C_1$ 
for $t\geq \sqrt{C_3}$. Taking $C_0 = \max\{C_1, \sqrt{C_3}, C_6, C_7\}$ 
completes the proof.
\end{proof}

We will use the following simple combinatorial lemma
(Lemma~11 in \cite{La}).

\begin{lemma}
\label{combf}
Let $\ell_0\geq \ell_1\geq\ldots\geq \ell_s$ be a fixed sequence of
positive integers and
\[ {\cal
  F}=\Big\{f\colon\{1,2,\ldots,\ell_0\}\rightarrow\{0,1,2,\ldots,s\}\colon\
\forall_{1\leq i\leq s}\ \left|\{r\colon f(r)\geq i\}\r|\leq
  \ell_i\Big\}.
\]
Then
\[
\left|{\cal F}\r|\leq\prod_{i=1}^s\Big(\frac{e
    \ell_{i-1}}{\ell_i}\Big)^{\ell_i}.
\]
\end{lemma}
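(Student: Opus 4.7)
The plan is to set up a natural bijection between $\cal F$ and the family of nested chains of subsets of $\{1,\ldots,\ell_0\}$, then count such chains by conditioning on each consecutive inclusion.

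First I would associate to any $f \in \cal F$ its sequence of level sets $A_i := \{r \le \ell_0 : f(r) \ge i\}$ for $i = 0, 1, \ldots, s$. Since $f$ takes values in $\{0, 1, \ldots, s\}$, we have $A_0 = \{1, \ldots, \ell_0\}$, and the sets form a decreasing chain
\[
A_0 \supseteq A_1 \supseteq \ldots \supseteq A_s,
\]
subject to the constraint $|A_i| \le \ell_i$ for $i \ge 1$. Conversely, any such chain recovers $f$ via $f(r) = \max\{i : r \in A_i\}$ (with the convention that the maximum is $0$ if $r \notin A_1$), so this correspondence is a bijection. Thus $|\cal F|$ equals the number of chains of this form.

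Next I would bound the number of chains by successively choosing $A_i \subseteq A_{i-1}$ for $i = 1, \ldots, s$. Since $A_0$ is prescribed, at each step the number of admissible sets $A_i$ is at most the number of subsets of $A_{i-1}$ of cardinality at most $\ell_i$. Because $|A_{i-1}| \le \ell_{i-1}$ and $\ell_i \le \ell_{i-1}$, the standard binomial tail estimate gives
\[
\sum_{k=0}^{\ell_i} \binom{|A_{i-1}|}{k} \le \sum_{k=0}^{\ell_i} \binom{\ell_{i-1}}{k} \le \Big(\frac{e\ell_{i-1}}{\ell_i}\Big)^{\ell_i}.
\]
Multiplying these bounds over $i = 1, \ldots, s$ yields the announced inequality.

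The argument is essentially combinatorial and I do not anticipate any serious obstacle; the only point requiring care is the classical estimate $\sum_{k=0}^{\ell} \binom{n}{k} \le (en/\ell)^{\ell}$ for $1 \le \ell \le n$, which follows from $\binom{n}{k} \le (en/k)^k$ together with a geometric summation. If one wanted to be fully precise about the edge cases $\ell_i = \ell_{i-1}$, one uses instead $2^{\ell_{i-1}} \le e^{\ell_{i-1}}$, which is still dominated by $(e\ell_{i-1}/\ell_i)^{\ell_i} = e^{\ell_i}$.
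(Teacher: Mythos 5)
Your proof is correct. Note that the paper itself does not prove this lemma but cites it as Lemma~11 of \cite{La}; so there is no in-text proof to compare against. Your level-set bijection $f \mapsto (A_1 \supseteq \cdots \supseteq A_s)$, $A_i = \{r : f(r)\ge i\}$, together with the chain-counting bound $\sum_{k=0}^{\ell_i}\binom{\ell_{i-1}}{k}\le (e\ell_{i-1}/\ell_i)^{\ell_i}$, is the natural way to establish the estimate, and the argument is complete and correct: the bijection is verified in both directions (using that the sets form a decreasing chain so that $f(r)\ge i$ iff $r\in A_i$), the monotonicity of $\binom{n}{k}$ in $n$ justifies replacing $|A_{i-1}|$ by $\ell_{i-1}$, and the standard inequality $\sum_{k\le\ell}\binom{n}{k}\le(en/\ell)^\ell$ for $1\le\ell\le n$ (which in fact already covers $\ell=n$, so the separate $\ell_i=\ell_{i-1}$ remark is not strictly needed) gives the per-step bound, which multiplies to the claimed product.
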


\begin{proof}[Proof of Theorem \ref{estN}]
Since $N_X\leq N$, the statement is trivial if $t\sqrt{N}\leq C\sigma_X(p)$.
Without loss of generality we assume that
$t\sqrt{N}\geq C\sigma_X(p)$ for large enough absolute constant $C>0$. 

Let $C_0$ be the constant from Proposition \ref{conditional}. 
Since $X$ is isotropic and log-concave we may assume that  
$\Pr(X(j)\geq t)\leq e^{-t/C_0}$ for $t\geq C_0$ and $1\leq j\leq N$
(we increase the actual value of $C_0$ if needed). Fix $p\geq 1$ and 
\begin{equation}\label{condont}
 t\geq C\log \left(\frac{Nt^2}{\sigma_X^2(p)}\r).
\end{equation}
Then, for large enough $C$, $t\geq 4 C_0$ and $t^2Ne^{-t/C_0}\leq
\sigma_X^2(p)$.

Define a positive integer $\ell$ by 
\[
p\leq \ell< 2p \quad \mbox{ and }\quad \ell=2^k \mbox{ for some
  integer }k.
\]
Then $\sigma_X(p) \leq \sigma_X(\ell)\leq \sigma_X(2p)\leq 4\sigma_X(p)$. 
Since $(\Ex (N_X(t))^p)^{1/p}\leq (\Ex (N_X(t))^\ell)^{1/\ell}$,  
it is enough to show that
\[
\Ex(t^2N_X(t))^\ell\leq (C_1\sigma_X(\ell))^{2\ell}.
\]
 Define sets  
\[
B_{i_1,\ldots,i_s}=\{X(i_1)\geq t,\ldots,X(i_s)\geq t\}\quad \mbox{
  and }\quad B_{\emptyset}=\Omega
\]
and denote 
\[
 m(\ell):=\Ex N_X(t)^\ell=\Ex\Big(\sum_{i=1}^N \ind{X(i)\geq t} \Big)^\ell=
 \sum_{i_1,\ldots,i_\ell=1}^{N}\Pr(B_{i_1,\ldots,i_\ell}). 
\]
It is enough to prove 
\begin{equation}
\label{toshow1}
   m(\ell)\leq \left(\frac{C\sigma_X(\ell)}{t}\r)^{2 \ell}.
\end{equation}

We divide the sum in $m(\ell)$ into several parts.
Let $j_1\geq 2$ be an integer satisfying 
\[
   2^{j_1-2}< \log\Big(\frac{Nt^2}{\sigma_X^2(\ell)}\Big)\leq 2^{j_1-1}.
\]
Define sets 
\[
  I_{0}=\big\{(i_1,\ldots,i_\ell)\in\{1,\ldots,N\}^\ell\colon 
  \Pr(B_{i_1,\ldots,i_\ell})> e^{-\ell}\big\},
\]
\[
 I_{j}=\big\{(i_1,\ldots,i_\ell)\in\{1,\ldots,N\}^\ell\colon
 \Pr(B_{i_1,\ldots,i_\ell})\in (e^{-2^{j}\ell},e^{-2^{j-1}\ell}]
 \big\},\quad   0<j<j_1,
\]
and
\[
I_{j_1}=\big\{(i_1,\ldots,i_\ell)\in\{1,\ldots,N\}^\ell\colon
\Pr(B_{i_1,\ldots,i_\ell})\leq e^{-2^{j_1-1}\ell}\big\}.
\]
Note $\{1,\ldots,N\}^\ell=\bigcup_{j=0}^{j_1}I_j$, hence  
$m(\ell)=\sum_{j=0}^{j_1}m_j(\ell)$, where
\[
m_j(\ell):=\sum_{(i_1,\ldots,i_\ell)\in I_j}\Pr(B_{i_1,\ldots,i_\ell})\quad
\mbox{ for } 0\leq j\leq j_1.
\]

 First we estimate $m_{j_1}(\ell)$ and $m_{0}(\ell)$. Since 
$\left|I_{j_1}\r|\leq N^\ell$ 
\[
m_{j_1}(\ell)= \sum_{(i_1,\ldots,i_\ell)\in I_{j_1}}\Pr(B_{i_1,\ldots,i_\ell})\leq
N^\ell e^{-2^{j_1-1}\ell}\leq \Big(\frac{\sigma_X(\ell)}{t}\Big)^{2 \ell}.
\]

To estimate $m_{0}(\ell)$, given   
$I\subset\{1,\ldots,N\}^\ell$ and $1\leq s\leq \ell$, define 
\[
P_sI=\{(i_1,\ldots,i_s)\colon (i_1,\ldots,i_\ell)\in I \mbox{ for some }
i_{s+1},\ldots,i_\ell\}.
\]
By Proposition \ref{conditional} for $s=1,\ldots,\ell-1$ one has 
\begin{align*}
  &\sum_{(i_1,\ldots,i_{s+1})\in
    P_{s+1}I_{0}}\Pr(B_{i_1,\ldots,i_{s+1}})\leq
  \sum_{(i_1,\ldots,i_{s})\in P_{s}I_{0}}\sum_{i_{s+1}=1}^N
  \Pr(B_{i_1,\ldots,i_s}\cap\{X(i_{s+1})\geq t\})
  \\
  &\phantom{aaaaaaaaa}\leq C_0\sum_{(i_1,\ldots,i_{s})\in P_{s}I_{0}}
  \Pr(B_{i_1,\ldots,i_{s}})(t^{-2}\sigma_X^2(-\log\Pr(B_{i_1,\ldots,i_{s}}))+Ne^{-t/C_0}).
\end{align*}
Note that for $(i_1,\ldots,i_s)\in P_sI_0$ one has
$\Pr(B_{i_1,\ldots,i_s})\geq e^{-\ell}$ and, by (\ref{condont}),
$t^2Ne^{-t/C_0}\leq \sigma_X^2(p)\leq \sigma_X^2(\ell)$.  Therefore
\[
\sum_{(i_1,\ldots,i_{s+1})\in P_{s+1}I_{0}}\Pr(B_{i_1,\ldots,i_{s+1}})
\leq C_4t^{-2}\sigma_X^2(\ell)\sum_{(i_1,\ldots,i_{s})\in P_{s}I_{0}}
\Pr(B_{i_1,\ldots,i_{s}}).
\]
By  induction and since $\Pr(X(j)\geq t)\leq e^{-t/C_0}$ we obtain
\begin{align*}
m_{0}(\ell)&=\sum_{(i_1,\ldots,i_\ell)\in I_{0}}\Pr(B_{i_1,\ldots,i_\ell})\leq
(C_4t^{-2}\sigma_X^2(\ell))^{\ell-1}
\sum_{i_1\in P_1I_0}\Pr(B_{i_1})
\\
&\leq (C_4t^{-2}\sigma_X^2(\ell))^{\ell-1}Ne^{-t/C_0}\leq 
\Big(\frac{C_5\sigma_X(\ell)}{t}\Big)^{2 \ell}.
\end{align*}

Now we estimate $m_j(\ell)$ for $0<j<j_1$. 
The upper bound is based on suitable estimates for $\left|I_j\r|$. 
Fix $0<j<j_1$ and define a positive integer $r_1$ by 
\[
2^{r_1}< \frac{t}{C_0}\leq 2^{r_1+1}.
\]
For all $(i_1,\ldots,i_\ell)\in I_j$ define a function 
$f_{i_1,\ldots,i_\ell}\colon \{1,\ldots,\ell\}\rightarrow \{j,j+1,\ldots,r_1\}$
by 
\[
  f_{i_1,\ldots,i_\ell}(s)=
    \left\{
       \begin{array}{ll}
            j &\mbox{ if }\Pr(B_{i_1,\ldots,i_s})\geq 
                        \exp(-2^{j+1})\Pr(B_{i_1,\ldots,i_{s-1}}),
\\
            r &\mbox{ if } \exp(-2^{r+1})\leq 
                           \Pr(B_{i_1,\ldots,i_s})/\Pr(B_{i_1,\ldots,i_{s-1}}) 
            <\exp(-2^{r}),\ j<r<r_1,
\\
            r_1 &\mbox{ if }\Pr(B_{i_1,\ldots,i_s})< 
                             \exp(-2^{r_1})\Pr(B_{i_1,\ldots,i_{s-1}}).
\end{array}
\right.
\]
Note that for every $(i_1,\ldots,i_\ell) \in  I_j$ one has 
$$
 1= \Pr(B_{\emptyset})\geq \Pr(B_{i_1}) \geq \Pr(B_{i_1,i_2})\geq \ldots 
  \geq \Pr(B_{i_1,\ldots,i_l}) > \exp(-2^{j}\ell)
$$ 
and $f_{i_1,\ldots,i_\ell}(1)=r_1$, because 
$\Pr(X(i_1)\geq t)\leq \exp{(-t/C_0)}< \exp(-2^{r_1})
\Pr(B_{\emptyset})$.

Denote 
\[
{\cal F}_j:=\big\{f_{i_1,\ldots,i_\ell}\colon\ (i_1,\ldots,i_\ell)\in
I_j\big\}.
\]
Then for $f=f_{i_1,\ldots,i_\ell}\in {\cal F}_j$ and every $r>j$ 
one has
\[
 \exp(-2^{j}\ell)< \Pr(B_{i_1,\ldots,i_\ell}) = \prod_{s=1}^{\ell}  
 \frac{\Pr(B_{i_1,\ldots,i_s})}{\Pr(B_{i_1,\ldots,i_{s-1}})}  
 < \exp(-2^r\left|\{s\colon f(s)\geq r\}\r|).
\]
Hence for every $r\geq j$ (the case $r=j$ is trivial) one has
\begin{equation}
\label{est_l_r}
 \left|\{s\colon f(s)\geq r\}\r| \leq 2^{j-r}\ell=:\ell_r.
\end{equation}
Clearly, $\sum_{r=j+1}^{r_1}\ell_r\leq \ell$ and $\ell_{r-1}/\ell_r=2$, 
so by Lemma \ref{combf} 
\[
  \left|{\cal F}_j\r| \leq \prod_{r=j+1}^{r_1}\Big(\frac{e \ell_{r-1}}{\ell_r}
  \Big)^{\ell_r}\leq e^{2 \ell}.
\]

Now for every $f\in {\cal F}_j$ we estimate the cardinality of the set
\[
  I_j(f):=\{(i_1,\ldots,i_\ell)\in I_j\colon\ f_{i_1,\ldots,i_\ell}=f\}.
\]
Fix $f$ and for $r=j,j+1,\ldots,r_1$ set 
\[
 A_r:=\{s\in\{1,\ldots,\ell\}\colon f(s)=r\}
 \quad \mbox{ and } \quad  n_r:=\left|A_r \r|.
\]
Then $1\in A_{r_1}$ and 
\[
 n_j+n_{j+1}+\ldots+n_{r_1}=\ell . 
\]
Fixing $r<r_1$, $i_1,\ldots,i_{s-1}$, $s\in A_r$ (then $s\geq$ 2 and 
$\Pr(B_{i_1,\ldots,i_{s-1}}) \leq \Pr(B_{i_1}) \leq \exp(-t/C_0)\leq 1/e$), 
applying (\ref{cond2}) with $u=2^{r+1}\leq t/C_0$, and using the definition of 
$I_j$, we observe that $i_s$ 
may take at most
\begin{align*}
\frac{4C_0 2^{2r}}{t^2}\sigma_X^2(-\log\Pr(B_{i_1,\ldots,i_{s-1}}))
&\leq
\frac{4C_0 2^{2r}}{t^2}\sigma_X^2(2^j \ell)
\leq\frac{16 C_0 2^{2(r+j)}\sigma_X^2(\ell)}{t^2}
\\
&\leq
\frac{16 C_0 \sigma_X^2(\ell)}{t^2}\exp(2(r+j))=:m_r
\end{align*}
values in order to satisfy 
$f_{i_1,\ldots,i_\ell}=f$. 
 Thus
\[
\left|I_j(f)\r|\leq N^{n_{r_1}}\prod_{r=j}^{r_1-1}m_r^{n_r}=
N^{n_{r_1}}\Big(\frac{16 C_0 \sigma_X^2(\ell)}{t^2}\Big)^{\ell-n_{r_1}}
\exp\Big(\sum_{r=j}^{r_1-1}2(r+j)n_r\Big).
\]
Note that $\eqref{est_l_r}$ implies that 
$n_r\leq \ell_r= 2^{j-r}\ell$, so 
\[
 \sum_{r=j}^{r_1-1}2(r+j)n_r\leq 2^{j+2}\ell\sum_{r=j}^{\infty}r2^{-r} 
  = 8(j+1) \ell \leq (50 + 2^{j-2})\ell.
\]
By the definition of $r_1$ we also have
\[
 n_{r_1}\leq 2^{j-r_1}\ell\leq\frac{2 C_0}{t}2^j \ell\leq
 \frac{2^{j-3}\ell}{\log(Nt^2/(4\sigma_X^2(\ell)))},
\]
where in the last inequality we used (\ref{condont}) with large enough $C$.
Thus we obtain that for every $f\in {\cal F}_j$ 
\[
 \left|I_j(f)\r|\leq \Big(\frac{C_6\sigma_X^2(\ell)}{t^2}\Big)^\ell 
 \Big(\frac{Nt^2}{
 4\sigma_X^2(\ell)}\Big)^{n_{r_1}}\exp\big(2^{j-2}\ell\big)\leq \Big(\frac{
 C_6 \sigma_X^2(\ell)}{t^2}\Big)^\ell\exp\Big(\frac{3}{8}2^{j}\ell\Big).
\]

This implies that
\[
  \left|I_j\r|\leq \left|{\cal F}_j\r|\cdot \Big(\frac{C_6\sigma_X^2(\ell)}{t^2}
  \Big)^\ell \exp\Big(\frac{3}{8}2^{j}\ell\Big)\leq \Big(\frac{C_6 \sigma_X^2
  (\ell)}{t^2}\Big)^\ell \exp\Big(\Big(2+\frac{3}{8}2^{j}\Big)\ell\Big).
\]
Hence
\[
  m_j(\ell)=\sum_{(i_1,\ldots,i_\ell)\in I_j}\Pr(B_{i_1,\ldots,i_\ell})\leq
\left|I_j\r|\exp(-2^{j-1}\ell)\leq \Big(\frac{C_7\sigma_X^2(\ell)}{t^2}
\Big)^\ell\exp\big(-2^{j-3}\ell\big).
\]
Combining  estimates for $m_j(\ell)$'s we obtain 
\begin{align*}
m(\ell)&=m_0(\ell)+m_{j_1}(\ell)+\sum_{j=1}^{j_1-1}m_j(\ell)
\\
&\leq \Big(\frac{\sigma_X(\ell)}{t}\Big)^{2 \ell}\Big(C_5^\ell + 1 + 
\sum_{j=1}^{\infty} C_7^\ell\exp\big(-2^{j-3}\ell\big)\Big)
\leq \Big(\frac{C_8\sigma_X(\ell)}{t}\Big)^{2 \ell}, 
\end{align*}
which proves \eqref{toshow1}.
\end{proof}

\subsection{Proof of Theorem \ref{imprunifPaouris}}

Fix $t\geq 1$ and let $m_0=m_0(X,t)$.

Since $\sigma_{P_JX}\leq \sigma_X$ for every $J\subset \{1,\ldots,N\}$, 
Theorem~\ref{imprPaouris} gives for any
$J\subset \{1,\ldots,N\}$ of cardinality $m_0$,
\[
 (\Ex|P_JX|^p)^{1/p}\leq C_1(\sqrt{m_0}+\sigma_{P_JX}(p))\leq 
 C_1 (\sqrt{m}+\sigma_X(p)).
\]
Using the Chebyshev inequality and $\sigma_X(u p)\leq 2u \sigma_X(p)$ 
we observe for such $J$, 
\begin{align*}
 \Pr\Big(|P_JX|\geq 36 C_1 t\sqrt{m}\log\Big(\frac{eN}{m}\Big)\Big)&\leq
\exp\Big(-\sigma_{X}^{-1}\Big(6t\sqrt{m}\log\Big(\frac{eN}{m}\Big)\Big)\Big)
\\
&\leq
\exp\Big(-3\sigma_{X}^{-1}\Big(t\sqrt{m}\log\Big(\frac{eN}{m}\Big)\Big)\Big) . 
\end{align*}
Thus, using the definition of $m_0$ and that $\sigma_{X}^{-1} (1) =2$, we obtain 
\begin{align}
  \notag \Pr\bigg(\sup_{|J|=m_0 }|P_J X|\geq 36 C_1
  t&\sqrt{m}\log\Big(\frac{eN}{m}\Big)\bigg)
  \\
  \notag &\leq
  \binom{N}{m_0}\exp\Big(-3\sigma_{X}^{-1}\Big(t\sqrt{m}\log\Big(\frac{eN}{m}\Big)\Big)\Big)
  \\
\label{est_iu1}
&\leq
\frac{1}{2}\exp\Big(-\sigma_{X}^{-1}\Big(t\sqrt{m}\log\Big(\frac{eN}{m}\Big)\Big)\Big).
\end{align}

Now notice that
\begin{equation}
\label{est_ui2}
\sup_{|I|=m}|P_IX|=\Big(\sum_{i=1}^m|X^*(i)|^2\Big)^{1/2}\leq
\sup_{|J|=m_0}|P_JX|+\Big(\sum_{i=0}^{s-1}2^im_0|X^*(2^im_0)|^2\Big)^{1/2}
\end{equation}
with $s=\lceil \log_2(m/m_0)\rceil\leq 2 \log(em/m_0)$. By Theorem
\ref{orderstat} we get for $u\geq 0$,
\[
\Pr\Big(|X^*(2^im_0)|^2\geq C_2\log^2\Big(\frac{eN}{2^im_0}\Big)+u^2\Big)\leq
\exp\Big(-\sigma_X^{-1}\Big(\frac{1}{C_3}u2^{i/2}\sqrt{m_0}\Big)\Big).
\]
We have
\[
 C_2\sum_{i=0}^{s-1}2^im_0\log^2\Big(\frac{eN}{2^im_0}\Big)\leq 
 C_4m\log^2\Big(\frac{eN}{m}\Big).
\]
Therefore for any $u_0,\ldots,u_{s-1}\geq 0$,
\begin{align*}
\Pr\Big(\sum_{i=0}^{s-1} 2^im_0|X^{*}(2^im_0)|^2\geq C_4m\log^2\Big(\frac{eN}{m}\Big)
&+\sum_{i=0}^{s-1}u_i^2\Big)
\\
&\leq
\sum_{i=0}^{s-1}\exp\Big(-\sigma_X^{-1}\Big(\frac{1}{C_3}u_i\Big)\Big).
\end{align*}

Take $u_i^2=\frac{2 C^2_3}{s}t^2m\log^2(\frac{eN}{m})$. 
Since $s\leq 2\log(em/m_0)$, we obtain
\begin{align*}
\Pr\Big(\sum_{i=0}^{s-1} 2^im_0|X^{*}(2^i&m_0)|^2\geq
(C_4+ 2 C_3^2 t^2)m\log^2\Big(\frac{eN}{m}\Big)\Big)
\\
&\leq
 s\exp\Big(-\sigma_{X}^{-1}\Big(\frac{\sqrt{2}}{ \sqrt{s}}t \sqrt{m}
 \log\Big(\frac{eN}{m}\Big)\Big)\Big)
\\
&\leq
\frac{1}{2}
\exp\Big(-\sigma_{X}^{-1}\Big(\frac{1}{\sqrt{\log(em/m_0)}}
 t \sqrt{m}\log\Big(\frac{eN}{m}\Big)\Big)\Big).
\end{align*}
This together with \eqref{est_iu1} and \eqref{est_ui2} completes the proof.
\qed

\address

\end{document}